\def\be{\begin{eqnarray}}
\def\ee{\end{eqnarray}}
\newtheorem{theorem}{Theorem}
\newtheorem{definition}[theorem]{Definition}
\newtheorem{lemma}[theorem]{Lemma}
\newtheorem{proposition}[theorem]{Proposition}
\newtheorem{corollary}[theorem]{Corollary}
\newtheorem{conjecture}[theorem]{Conjecture}
\newtheorem{example}[theorem]{Example}
\newenvironment{customprop}[1]
  {\innercustomprop}
  {\endinnercustomprop}
\definecolor{A-Cycle}{RGB}{255,69,0}
\definecolor{B-Cycle}{RGB}{0,0,255}
\definecolor{DehnCycle}{rgb}{0.2,0.6,0.2}
\title{Cauchy identities for genus 2 Schur polynomials}
\author{S.~Arthamonov}
\email[S.A.]{arthamonov@bimsa.cn}
\author{Sh.~Shakirov}
\email[Sh.Sh.]{shakirov@bimsa.cn}
\address[S.A.,Sh.Sh.]{Beijing Institute of Mathematical Sciences and Applications}
\author{W.~Yan}
\email[W.Y.]{wbyan@tsinghua.edu.cn}
\address[W.Y.]{Yau Mathematical Sciences Center, Tsinghua University}
\begin{document}

\ytableausetup{boxsize=5pt}

\begin{abstract}
Genus 2 Macdonald polynomials $\Psi^{(q,t)}_{j_1,j_2,j_3}$ generalize ordinary Macdonald polynomials in several aspects. First, they provide common eigenfunctions for commuting difference operators that generalize the Macdonald difference operators of type $A_1$. Second, the algebra generated by these difference operators together with multiplication operators admits an action of genus 2 mapping class group (MCG) that generalizes the well-known action of $SL(2,{\mathbb Z})$ for ordinary Macdonald polynomials. In this paper, one more important aspect of Macdonald theory is considered: the Cauchy identities. We construct a genus 2 generalization of Cauchy identities in the particular case when $t=q=1$, i.e. for genus 2 Schur polynomials.
\end{abstract}

\maketitle

\vspace{-5ex}
\section{Introduction}

Genus two Macdonald polynomials $\Psi_{j_1,j_2,j_3}^{(q,t)}$ for root system $A_1$ were introduced in \cite{ArthamonovShakirov'2019} as a unique solution to a system of recursive relations generalizing Pieri rule. They are labeled by triples of nonnegative integers $(j_1,j_2,j_3)$ satisfying the following admissibility condition.

\begin{definition}
We call a triple of nonnegative integers $(j_1,j_2,j_3)$ admissible if
\begin{equation}
|j_1 - j_2| \leq j_3 \leq j_1 + j_2\qquad \textrm{and}\qquad j_1 + j_2 + j_3\equiv 0\bmod 2.
\label{eq:AdmissibilityDefinition}
\end{equation}
Hereinafter, we denote by $\mathbf J$ the set of all admissible triples and refer to (\ref{eq:AdmissibilityDefinition}) as triangle inequality.
\label{def:AdmissibleTriple}
\end{definition}

\noindent
\begin{minipage}{\linewidth}
\begin{minipage}{0.5\linewidth}
\hspace{\baselineskip}In this paper we study the following limit of genus two Macdonald polynomials
\begin{equation}
\Phi_{j_1,j_2,j_3}:=\frac{\lim_{q\rightarrow 1}\Psi_{j_1,j_2,j_3}^{(t=q)}}{(j_1+1)(j_2+1)(j_3+1)},\quad (j_1,j_2,j_3)\in\mathbf J.
\label{eq:PhiLimitDefinition}
\end{equation}
We show that this limit exists and gives rise to a family of Laurent polynomials in three variables $x_{12},x_{13},x_{23}$ which we refer to as \textit{Genus two Schur polynomials}.
\end{minipage}
\hfill
\begin{minipage}{0.45\linewidth}
\centering
\begin{tikzpicture}[scale=0.75]
\draw[ultra thick] (-0.1,1.5) to[out=0,in=180] (1.5,1.1) to[out=0,in=180] (3.1,1.5) to[out=0,in=90] (4.65,0) to[out=270,in=0] (3.1,-1.5) to[out=180,in=0] (1.5,-1.1) to[out=180,in=0] (-0.1,-1.5) to[out=180,in=270] (-1.55,0) to[out=90,in=180] (-0.1,1.5);
\draw[ultra thick] (0.2-0.5,-0.1) to[out=50,in=130] (0.2+0.5,-0.1);
\draw[ultra thick] (0.2-0.7,0.1) to[out=-60,in=240] (0.2+0.7,0.1);
\draw[ultra thick] (2.8-0.5,-0.1) to[out=50,in=130] (2.8+0.5,-0.1);
\draw[ultra thick] (2.8-0.7,0.1) to[out=-60,in=240] (2.8+0.7,0.1);
\draw[color=B-Cycle,thick] (1.5,-0.7) to (1.5,0.7);
\draw[color=B-Cycle,thick] (1.5,0.7) to (3.5,0.7) to[out=0,in=90] (4,0.2) to[out=270,in=90] (4,-0.2) to[out=270,in=0] (3.5,-0.7) to (1.5,-0.7);
\draw[color=B-Cycle,thick] (1.5,-0.7) to (-0.5,-0.7) to[out=180,in=270] (-1,-0.2) to (-1,0.2) to[out=90,in=180] (-0.5,0.7) to (1.5,0.7);
\fill[color=B-Cycle] (1.5,0.7) circle (0.09);
\fill[color=white] (1.5,0.7) circle (0.05);
\fill[color=B-Cycle] (1.5,-0.7) circle (0.09);
\fill[color=white] (1.5,-0.7) circle (0.05);
\draw (-1.2,0) node[color=B-Cycle] {$j_1$};
\draw (1.3,0) node[color=B-Cycle] {$j_2$};
\draw (3.8,0) node[color=B-Cycle] {$j_3$};
\end{tikzpicture}

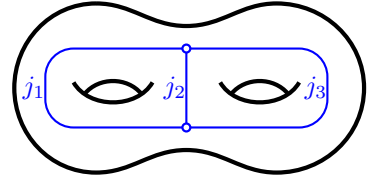
\captionof{figure}{Admissible triples.}
\end{minipage}
\end{minipage}
\smallskip

Here we aim to generalize to genus two Schur polynomials an important part of the theory of ordinary Schur polynomials: the Cauchy identities. For example, ordinary Schur polynomials of type $A_1$ have a form

\begin{equation}
S_{j}(x) = \dfrac{x^{j+1} - x^{-j-1}}{x - x^{-1}}
\end{equation}
and the following Cauchy identities hold:

\begin{equation}
\sum\limits_{j = 0}^{\infty} S_{j}(x) S_{j}(y) \lambda^{j} = \dfrac{1}{(1 - \lambda xy)(1 - \lambda x y^{-1})(1 - \lambda x^{-1} y)(1 - \lambda x^{-1} y^{-1})}
\end{equation}
\begin{equation}
\sum\limits_{j = 0}^{\infty} S_{j}(x) \lambda^{j} = \dfrac{1}{(1 - \lambda x)(1 - \lambda x^{-1})}
\end{equation}
Similarly, for genus two Schur polynomials, we consider infinite sums where the summand is a product of one, two, or more polynomials $\Phi_{j_1,j_2,j_3}$. Our main result Theorem \ref{th:Arctangent} is a proof of Cauchy sum where the summand is linear in the polynomials $\Phi_{j_1,j_2,j_3}$, for which we present a formula in terms of elementary functions (arctangent). For the more complicated cases which are quadratic or higher degree in the polynomials $\Phi_{j_1,j_2,j_3}$, we conjecture (see Conjecture \ref{conj:MainConjecture}) formulas in terms of generalized hypergeometric functions studied by Gelfand, Kapranov, Zelevinsky \cite{GelfandKapranovZelevinsky'1994}.

\section{Genus two Schur polynomials}

By virtue of their definition, genus two Schur polynomials inherit a number of major properties from  genus two Macdonald polynomials. In particular, $\Phi_{j_1,j_2,j_3}$ form a basis of common eigenfunctions of three second order differential operators and satisfy their own analog of Pieri rule. For the purpose of the proofs, however, it is more convenient for us to start by examining the limit of Pieri recursion rule first which would then be used to show that limit (\ref{eq:PhiLimitDefinition}) exists for all admissible triples $(j_1,j_2,j_3)\in\mathbf J$.

Let $\mathcal H$ be a $\mathbb Z_2\times\mathbb Z_2\times\mathbb Z_2$-invariant subring of Laurent polynomials in three variables
\begin{equation}
\mathcal H=\mathbb C[x_{12}^{\pm1},x_{13}^{\pm},x_{23}^{\pm}]^{\mathbb Z_2\times\mathbb Z_2\times\mathbb Z_2}=\mathbb C[x_{12}+x_{12}^{-1},x_{13}+x_{13}^{-1},x_{23}+x_{23}^{-1}].
\label{eq:HDefinition}
\end{equation}

\begin{definition}
We say that a collection of Laurent polynomials in $\mathcal H$ labeled by admissible triples of indices
\begin{equation*}
\phi_{j_1,j_2,j_3}\in\mathcal H,\qquad (j_1,j_2,j_3)\in\mathbf J
\end{equation*}
satisfies Pieri rule for genus two Schur polynomials if
\begin{subequations}
\begin{align}
\label{eq:PieriForSchur12}(x_{12} + x_{12}^{-1}) \ \phi_{j_1,j_2,j_3} = \sum_{a,b \in \{-1,+1\}} \ K_{a,b}(j_1,j_2,j_3) \ \phi_{j_1+a,j_2+b,j_3} \\[7pt]
\label{eq:PieriForSchur13}(x_{13} + x_{13}^{-1}) \ \phi_{j_1,j_2,j_3} = \sum_{a,b \in \{-1,+1\}} \ K_{a,b}(j_1,j_3,j_2) \ \phi_{j_1+a,j_2,j_3+b} \\[7pt]
\label{eq:PieriForSchur23}(x_{23} + x_{23}^{-1}) \ \phi_{j_1,j_2,j_3} = \sum_{a,b \in \{-1,+1\}} \ K_{a,b}(j_2,j_3,j_1) \ \phi_{j_1,j_2+a,j_3+b}
\end{align}
\label{eq:PieriForSchur}
\end{subequations}
for all $(j_1,j_2,j_3)\in\mathbf J$. Here on the right hand side we assume $\phi_{j_1,j_2,j_3}=0$ for non-admissible triples and 
\begin{equation}
K_{a,b}(j_1,j_2,j_3)=ab\,\frac{(aj_1+bj_2+j_3+a+b+2)(aj_1+bj_2-j_3+a+b)}{4 (j_1+1) (j_2+1)}.
\label{eq:Kab}
\end{equation}
\label{def:GenusTwoSchurPieri}
\end{definition}

\begin{lemma}
For a given admissible triple $(j_1,j_2,j_3)\in\mathbf J$ and $a,b=\pm1$, the coefficient $K_{a,b}(j_1,j_2,j_3)$ is non-vanishing if and only if the triple $(j_1+a,j_2+b,j_3)$ is admissible.
\label{lemm:NonvanishingKab}
\end{lemma}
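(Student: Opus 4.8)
The plan is to observe that, since $j_1,j_2\ge 0$, the denominator $4(j_1+1)(j_2+1)$ in \eqref{eq:Kab} never vanishes, so $K_{a,b}(j_1,j_2,j_3)=0$ if and only if one of the two linear factors in the numerator vanishes (the prefactor $ab=\pm1$ is irrelevant). I would therefore reduce the whole statement to tracking the signs of $P_{a,b}:=aj_1+bj_2+j_3+a+b+2$ and $Q_{a,b}:=aj_1+bj_2-j_3+a+b$, and comparing their vanishing with the admissibility of $(j_1+a,j_2+b,j_3)$.

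First I would dispose of the parity condition once and for all: since $(j_1+a)+(j_2+b)+j_3=(j_1+j_2+j_3)+(a+b)$ and $a+b\in\{-2,0,2\}$ is even, the shifted triple automatically inherits the congruence $j_1+j_2+j_3\equiv0\bmod2$ from \eqref{eq:AdmissibilityDefinition}. Thus admissibility of the shifted triple is governed purely by nonnegativity and the triangle inequality. The same congruence has a second, crucial consequence: it forces both $j_1-j_2-j_3$ and $j_1+j_2-j_3$ to be even, which is exactly what is needed to match a strict non-vanishing condition to a non-strict triangle inequality.

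Next I would exploit the symmetry $K_{a,b}(j_1,j_2,j_3)=K_{b,a}(j_2,j_1,j_3)$, together with the invariance of admissibility under exchanging the first two entries, to reduce to the three cases $(a,b)\in\{(+1,+1),(+1,-1),(-1,-1)\}$. In each case I would expand $P_{a,b}$ and $Q_{a,b}$ and read off their signs from the original triangle inequality $|j_1-j_2|\le j_3\le j_1+j_2$. For $(+1,+1)$ both factors are strictly positive (bounded below by $4$ and $2$ respectively), while the shifted triple $(j_1+1,j_2+1,j_3)$ is always admissible, so both sides of the equivalence hold identically. For $(+1,-1)$ one finds $P_{+1,-1}=j_1-j_2+j_3+2\ge2>0$ always, while $Q_{+1,-1}=j_1-j_2-j_3\le0$; since this quantity is even, $Q_{+1,-1}\ne0$ is equivalent to $j_1-j_2\le j_3-2$, which is precisely the nontrivial triangle inequality $|(j_1+1)-(j_2-1)|\le j_3$ for the shifted triple. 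The case $(-1,-1)$ is handled symmetrically, with $Q_{-1,-1}=-(j_1+j_2+j_3+2)<0$ always and $P_{-1,-1}=j_3-j_1-j_2\le0$ controlling the upper triangle inequality $j_3\le(j_1-1)+(j_2-1)$.

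The one point requiring care, and the step I expect to be the main obstacle, is the interaction between the vanishing of $K_{a,b}$ and the nonnegativity constraint on the shifted entries, e.g. $j_2-1\ge0$ in the case $(+1,-1)$. I would resolve this by noting that the boundary case is automatically swept up by the factor already being tracked: if $j_2=0$, the original admissibility $|j_1-j_2|\le j_3\le j_1+j_2$ collapses to $j_3=j_1$, whence $Q_{+1,-1}=j_1-j_3=0$ and $K_{+1,-1}$ vanishes, consistently with the non-admissibility of $(j_1+1,-1,j_3)$. The analogous degeneration $j_1=0$ or $j_2=0$ in the case $(-1,-1)$ again forces the tracked factor to zero. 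Assembling the three cases together with the reflected case $(-1,+1)$ then yields the equivalence in all situations.
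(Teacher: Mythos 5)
Your proposal is correct and follows essentially the same route as the paper's proof: dispose of the parity condition via $a+b\equiv0\bmod2$, then match the vanishing of the two numerator factors of $K_{a,b}$ case-by-case against the triangle inequality for the shifted triple. The only differences are cosmetic — you add the symmetry reduction $K_{a,b}(j_1,j_2,j_3)=K_{b,a}(j_2,j_1,j_3)$ to fold $(-1,+1)$ into $(+1,-1)$, and you make explicit the evenness of $j_1\pm j_2-j_3$ and the boundary cases $j_1=0$ or $j_2=0$ (where nonnegativity of the shifted entry could fail), both of which the paper handles implicitly.
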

\begin{proof}
First note that in all cases $a+b\equiv0\bmod 2$, so the parity condition in (\ref{eq:AdmissibilityDefinition}) is always satisfied for the triple $(j_1+a,j_2+b,j_3)$. Now for the triangle inequality we have to consider the following cases
\begin{itemize}
\item $(a,b)=(+1,+1)$. In this case the triple $(j_1+1,j_2+1,j_3)$ is always admissible and, at the same time, the coefficient $K_{+1,+1}=\frac{(j_1+j_2+j_3+4)(j_1+j_2-j_3+2)}{4(j_1+1)(j_2+1)}$ is always nonzero.
\item $(a,b)=(+1,-1)$. In this case $j_1+j_2$ is preserved. The $|j_1-j_2|$ increases only when $j_1\geq j_2$. Under such assumption the triple $(j_1+1,j_2-1,j_3)$ is admissible unless $j_1-j_2=j_3$. The latter happens exactly when the second factor in the numerator vanishes.
\item $(a,b)=(-1,+1)$. In this case $j_1+j_2$ is also preserved. The $|j_1-j_2|$ increases only when $j_2\geq j_1$. Under such assumption the triple $(j_1-1,j_2+1,j_3)$ is admissible unless $j_2-j_1=j_3$. The latter happens exactly when the second factor in the numerator vanishes.
\item $(a,b)=(-1,-1)$. In this case the coefficient reads $K_{-1,-1}=\frac{(-j_1-j_2+j_3)(-j_1-j_2+j_3+2)}{4(j_1+1)(j_2+1)}$. Assuming the original triple $(j_1,j_2,j_3)$ was admissible, this coefficient vanishes only when $j_3=j_1+j_2$, this is precisely the case when the second inequality of (\ref{eq:AdmissibilityDefinition}) fails for the new triple $(j_1-1,j_2-1,j_3)$.
\end{itemize}
\end{proof}

The following technical lemma allows one to establish a relation between Definition \ref{def:GenusTwoSchurPieri} and the Pieri rule for genus two Macdonald polynomials from \cite{ArthamonovShakirov'2019}.
\begin{lemma}
Let $a,b\in\{\pm1\}$ and suppose that the triples $(j_1,j_2,j_3)$ and $(j_1+a,j_2+b,j_3)$ are both admissible, we have the following limiting relation
\begin{equation*}
K_{a,b}(j_1,j_2,j_3)=\frac{(j_1+a+1)(j_2+b+1)}{(j_1+1)(j_2+1)}\lim_{q\rightarrow 1}\left(C_{a,b}(j_1,j_2,j_3)\big|_{t=q}\right),
\end{equation*}
where
\begin{equation*}
C_{a,b}(j_1,j_2,j_3) = a \ b \ \frac{
\left[ \frac{a j_1 + b j_2 + j_3}{2}, \frac{a + b + 2}{2} \right]_{q,t}
\left[ \frac{a j_1 + b j_2 - j_3}{2}, \frac{a + b}{2} \right]_{q,t}
\Big[j_1 - 1, 2\Big]_{q,t}
\Big[j_2 - 1, 2\Big]_{q,t}
}
{
\Big[j_1, \frac{a+3}{2} \Big]_{q,t}\Big[j_1 - 1, \frac{a+3}{2}\Big]_{q,t}\Big[j_2, \frac{b+3}{2} \Big]_{q,t}\Big[j_2 - 1, \frac{b+3}{2}\Big]_{q,t}
},
\end{equation*}
\begin{equation*}
[n,m]_{q,t} = \dfrac{ q^{\frac{n}{2}} t^{\frac{m}{2}} - q^{-\frac{n}{2}} t^{-\frac{m}{2}} }{ q^{\frac{1}{2}} - q^{-\frac{1}{2}} }.
\end{equation*}
\label{lemm:KabIsALimitOfCab}
\end{lemma}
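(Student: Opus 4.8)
The plan is to prove the identity by a direct evaluation of the $q\to1$ limit after the specialization $t=q$, so that no structural input beyond the two definitions and the admissibility hypothesis is needed. First I would record the effect of setting $t=q$ on the bracket: since
\[
[n,m]_{q,q}=\frac{q^{(n+m)/2}-q^{-(n+m)/2}}{q^{1/2}-q^{-1/2}},
\]
the two parameters merge into the single ordinary quantum integer indexed by $n+m$, and therefore $\lim_{q\to1}[n,m]_{q,q}=n+m$. Before passing to the limit I would verify that every bracket in $C_{a,b}$ has integer argument: the two numerator brackets carry arguments $\tfrac{aj_1+bj_2+j_3+a+b+2}{2}$ and $\tfrac{aj_1+bj_2-j_3+a+b}{2}$, which are integers precisely because $a\equiv b\equiv1\bmod2$ together with the parity condition $j_1+j_2+j_3\equiv0\bmod2$ from (\ref{eq:AdmissibilityDefinition}); the remaining brackets have arguments of the form $j_1+\tfrac{a+3}{2}$ with $\tfrac{a+3}{2}\in\{1,2\}$, hence integers as well.

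Crucially, the admissibility of both $(j_1,j_2,j_3)$ and $(j_1+a,j_2+b,j_3)$ guarantees that no denominator bracket degenerates: when $a=-1$ admissibility of the shifted triple forces $j_1\geq1$, so the argument $j_1-1+\tfrac{a+3}{2}=j_1$ of the most dangerous bracket stays positive, and symmetrically for $b$ and $j_2$. Hence each factor is continuous at $q=1$ and the denominator limits are nonzero, so the limit of the quotient is the quotient of the limits and may be computed factorwise. The numerator brackets tend to $\tfrac{aj_1+bj_2+j_3+a+b+2}{2}$ and $\tfrac{aj_1+bj_2-j_3+a+b}{2}$, the brackets $[j_1-1,2]$ and $[j_2-1,2]$ tend to $j_1+1$ and $j_2+1$, and the four denominator brackets tend to $\tfrac{2j_1+a+3}{2}$, $\tfrac{2j_1+a+1}{2}$, $\tfrac{2j_2+b+3}{2}$, $\tfrac{2j_2+b+1}{2}$; collecting the powers of $\tfrac12$ leaves an overall factor of $4$.

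The one step that needs an observation rather than bookkeeping is the factorization of the denominator pairs, which uses $a,b\in\{\pm1\}$:
\[
(2j_1+a+3)(2j_1+a+1)=4(j_1+1)(j_1+a+1),\qquad (2j_2+b+3)(2j_2+b+1)=4(j_2+1)(j_2+b+1),
\]
each checked by splitting into the cases $\pm1$. After substituting these, the factors $(j_1+1)(j_2+1)$ produced by $[j_1-1,2]$ and $[j_2-1,2]$ cancel, yielding
\[
\lim_{q\to1}\bigl(C_{a,b}(j_1,j_2,j_3)\big|_{t=q}\bigr)=ab\,\frac{(aj_1+bj_2+j_3+a+b+2)(aj_1+bj_2-j_3+a+b)}{4(j_1+a+1)(j_2+b+1)}.
\]
Multiplying by the prefactor $\tfrac{(j_1+a+1)(j_2+b+1)}{(j_1+1)(j_2+1)}$ then cancels $(j_1+a+1)(j_2+b+1)$ and reproduces formula (\ref{eq:Kab}) for $K_{a,b}(j_1,j_2,j_3)$ exactly. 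The argument is entirely elementary; the only points requiring care are the integrality and nonvanishing of the bracket arguments, both of which I would trace back to the admissibility of the two triples.
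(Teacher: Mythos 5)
Your proof is correct and in substance identical to the paper's: the paper likewise sets $t=q$, uses $\lim_{q\to1}[n,m]_{q,q}=n+m$ factorwise, and simplifies, merely organizing the computation as four explicit cases $(a,b)\in\{\pm1\}^2$ (with the side conditions $j_2>0$, $j_1>0$, $j_1,j_2>0$ imposed exactly where you derive them from admissibility of the shifted triple), whereas you treat all cases uniformly via the factorization $(2j_1+a+3)(2j_1+a+1)=4(j_1+1)(j_1+a+1)$. There is no gap; your explicit justification of the nonvanishing of the denominator brackets is, if anything, slightly more complete than what the paper records.
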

\begin{proof}
Consider the following exhaustive list of possible cases
\begin{itemize}
\item $(a,b)=(+1,+1)$. In this case we get
\begin{equation*}
\frac{(j_1+2)(j_2+1)}{(j_1+1,j_2+1)}\lim_{q\rightarrow1}C_{+1,+1}(j_1,j_2,j_3)\big|_{t=q}=\frac{(j_1+j_2-j_3+2)(j_1+j_2+j_3+4)}{4(j_1+1)(j_2+1)}=K_{+1,+1}(j_1,j_2,j_3).
\end{equation*}
\item $(a,b)=(+1,-1)$ and $j_2>0$. This gives
\begin{equation*}
\frac{(j_1+2)j_2}{(j_1+1,j_2+1)}\lim_{q\rightarrow1}C_{+1,-1}(j_1,j_2,j_3)\big|_{t=q}=-\frac{(j_1-j_2-j_3)(j_1-j_2+j_3+2)}{4(j_1+1)(j_2+1)}=K_{+1,-1}(j_1,j_2,j_3).
\end{equation*}
\item $(a,b)=(-1,+1)$ and $j_1>0$. This gives
\begin{equation*}
\frac{j_1(j_2+2)}{(j_1+1)(j_2+1)}\lim_{q\rightarrow1}C_{-1,+1}(j_1,j_2,j_3)\big|_{t=q}=-\frac{(j_1-j_2-j_3-2)(j_1-j_2+j_3)}{4(j_1+1)(j_2+1)}=K_{-1,+1}(j_1,j_2,j_3).
\end{equation*}
\item $(a,b)=(-1,-1)$ and $j_1,j_2>0$. Finally, in this case we get
\begin{equation*}
\frac{j_1j_2}{(j_1+1)(j_2+1)}\lim_{q\rightarrow1}C_{-1,-1}(j_1,j_2,j_3)\big|_{t=q}=\frac{(j_1+j_2-j_3)(j_1+j_2+j_3+1)}{4(j_1+1)(j_2+1)}=K_{-1,-1}(j_1,j_2,j_3).
\end{equation*}
\end{itemize}
\end{proof}

\begin{proposition}
There exists a unique solution to a system of recursive equations (\ref{eq:PieriForSchur}) with initial condition $\phi_{0,0,0}=1$ and $\phi_{j_1,j_2,j_3}=0$ for non-admissible triples. Moreover, this solution is given by the following limit of the genus two Macdonald polynomials:
\begin{equation}
\phi_{j_1,j_2,j_3}=\Phi_{j_1,j_2,j_3}=\frac{\lim_{q\rightarrow1}\Psi_{j_1,j_2,j_3}^{(t=q)}}{(j_1+1)(j_2+1)(j_3+1)}.
\label{eq:LimitSolvesGenusTwoSchurPieri}
\end{equation}
\label{prop:GenusTwoPieriForSchurSolution}
\end{proposition}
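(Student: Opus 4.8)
The plan is to prove the statement in two stages: first, that the recursion (\ref{eq:PieriForSchur}) together with the normalization $\phi_{0,0,0}=1$ admits \emph{at most one} solution in $\mathcal H$; and second, that the limit $\Phi_{j_1,j_2,j_3}$ exists and provides such a solution, whence $\phi=\Phi$ by uniqueness. The first stage is purely combinatorial/algebraic and uses the explicit form of $K_{a,b}$; the second imports the known Pieri recursion for the genus two Macdonald polynomials of \cite{ArthamonovShakirov'2019} and passes to the limit via Lemma \ref{lemm:KabIsALimitOfCab}.

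For uniqueness I would induct on the level $N:=j_1+j_2+j_3$, which is always even. The base case $N=0$ is the normalization. For the inductive step, note that in each Pieri relation the coefficient $K_{+1,+1}$ of the top term is nonzero by Lemma \ref{lemm:NonvanishingKab}, since adding $+1$ to two indices always yields an admissible triple. Thus applying (\ref{eq:PieriForSchur12}) at the source triple $(j_1-1,j_2-1,j_3)$ expresses $\phi_{j_1,j_2,j_3}$ as $K_{+1,+1}(j_1-1,j_2-1,j_3)^{-1}$ times a combination of $(x_{12}+x_{12}^{-1})\phi_{j_1-1,j_2-1,j_3}$ and of $\phi$'s at levels $N-2$ and $N-4$, all known by induction. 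The point to verify is that every nonzero admissible triple at level $N$ is reachable this way through \emph{some} Pieri relation applied at an admissible source at level $N-2$. The source $(j_1-1,j_2-1,j_3)$ is admissible exactly when $j_1,j_2\ge1$ and $j_3\le j_1+j_2-2$; by parity the borderline $j_3=j_1+j_2-1$ cannot occur, so this fails only on the face $j_3=j_1+j_2$, and symmetric statements hold for (\ref{eq:PieriForSchur13}) and (\ref{eq:PieriForSchur23}) via $(j_1-1,j_2,j_3-1)$ and $(j_1,j_2-1,j_3-1)$, failing only on $j_2=j_1+j_3$ and $j_1=j_2+j_3$ respectively. Since all three faces hold simultaneously only at the origin, and since a positive index is forced to zero (blocking one relation) only on the loci $(j,j,0)$, $(0,j,j)$, $(j,0,j)$ — where the remaining relation still applies (e.g.\ $(j,j,0)$ is reached by (\ref{eq:PieriForSchur12}) from the admissible $(j-1,j-1,0)$) — at least one lowering is always available, closing the induction.

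For existence I would invoke the genus two Macdonald polynomials $\Psi^{(q,t)}_{j_1,j_2,j_3}$, which satisfy their own Pieri recursion with coefficients $C_{a,b}$ and base point $\Psi_{0,0,0}=1$. Specializing $t=q$ and running the same level induction shows $\lim_{q\to1}\Psi^{(t=q)}_{j_1,j_2,j_3}$ exists: each $C_{a,b}|_{t=q}$ is a ratio of quantum integers with a finite limit, and the top coefficient has nonzero limit (by Lemma \ref{lemm:KabIsALimitOfCab}, since $K_{+1,+1}\neq0$), so the recursion that produces $\Psi_{j_1,j_2,j_3}$ from lower terms passes to the limit and yields a Laurent polynomial in $\mathcal H$. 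Setting $\Phi_{j_1,j_2,j_3}=\lim_q\Psi^{(t=q)}_{j_1,j_2,j_3}/\big((j_1+1)(j_2+1)(j_3+1)\big)$ and taking $q\to1$ in the Macdonald Pieri relation — whose left-hand side is multiplication by the deformation-independent $(x_{12}+x_{12}^{-1})$ — the coefficient of $\Phi_{j_1+a,j_2+b,j_3}$ becomes $\frac{(j_1+a+1)(j_2+b+1)}{(j_1+1)(j_2+1)}\lim_q C_{a,b}|_{t=q}=K_{a,b}(j_1,j_2,j_3)$ by Lemma \ref{lemm:KabIsALimitOfCab}. This is exactly (\ref{eq:PieriForSchur12}), and likewise for (\ref{eq:PieriForSchur13}) and (\ref{eq:PieriForSchur23}); whenever a target triple is non-admissible, Lemma \ref{lemm:NonvanishingKab} guarantees the matching coefficient vanishes, consistent with the convention $\Phi=0$ off $\mathbf J$. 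Hence $\Phi$ solves (\ref{eq:PieriForSchur}), and uniqueness forces $\phi=\Phi$.

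The main obstacle is twofold. Combinatorially, it is the verification that the lowering step is always available — the boundary analysis on the faces of the admissible cone and on the degenerate loci $(j,j,0)$ and permutations — which is precisely what lets the single-relation induction close. Analytically, it is the interchange of the $q\to1$ limit with the Macdonald recursion; this rests on the nonvanishing of $\lim_q C_{+1,+1}|_{t=q}$ and the finiteness of all $C_{a,b}|_{t=q}$ supplied by Lemma \ref{lemm:KabIsALimitOfCab}, together with the imported fact that the $\Psi^{(q,t)}$ are determined by their Pieri recursion.
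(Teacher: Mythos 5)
Your proof is correct, and its core machinery coincides with the paper's: induction on the level $N=j_1+j_2+j_3$, the observation that every nonzero admissible triple admits at least one admissible lowering among $(i_1-1,i_2-1,i_3)$, $(i_1-1,i_2,i_3-1)$, $(i_1,i_2-1,i_3-1)$, invertibility of the top coefficient $K_{+1,+1}$ via Lemma \ref{lemm:NonvanishingKab}, and the transfer of the Macdonald Pieri rule to the limit via Lemma \ref{lemm:KabIsALimitOfCab}. The organizational difference is real, though. The paper runs a single interleaved induction in which $\phi_{i_1,i_2,i_3}$ is defined by the (up to three) linear equations coming from the admissible sources, and must therefore invoke the compatibility of these equations, citing Proposition 4 of \cite{ArthamonovShakirov'2019} together with Lemma \ref{lemm:KabIsALimitOfCab}; you instead separate uniqueness (any solution is forced through one chosen lowering per triple) from existence (the limit $\Phi$ inherits all three Pieri relations from $\Psi^{(q,t)}$ simultaneously), so consistency of the over-determined system is not an input but a corollary of the existence of $\Psi^{(q,t)}$ — both routes rest on \cite{ArthamonovShakirov'2019}, but yours needs only that $\Psi^{(q,t)}$ exists, is normalized by $\Psi_{0,0,0}=1$, and satisfies its Pieri recursion. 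You also make explicit the boundary analysis the paper leaves implicit (failure of a lowering only on the faces $j_3=j_1+j_2$, $j_2=j_1+j_3$, $j_1=j_2+j_3$ or on the degenerate loci $(j,j,0)$, $(0,j,j)$, $(j,0,j)$, with at most one obstruction active away from the origin), which is exactly what closes the single-relation induction. One small imprecision to fix: it is not true that every $C_{a,b}\big|_{t=q}$ has a finite limit — e.g.\ for $a=-1$ and $j_1=0$ the denominator factor $\big[j_1-1,1\big]_{q,t}\big|_{t=q}$ vanishes identically — but these are precisely the cases where the target triple is non-admissible and the term is absent from the Macdonald Pieri rule ($\Psi=0$ there by convention), so your limit argument survives provided you state the finiteness, as Lemma \ref{lemm:KabIsALimitOfCab} does, only for pairs of admissible triples.
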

\begin{proof}
We will prove the statement of this proposition by induction in $j_1+j_2+j_3=N$. Assuming the initial conditions $\phi_{0,0,0}=1$, for every $N\in\mathbb Z_{\geq0}$ consider the following inductive statement
\begin{equation*}
\mathop{\mathrm{Pieri}}(N):=\left\{\qquad\mathrm{\parbox{0.45\linewidth}{\textit{There exists a unique finite collection\\[0.7em] $\{\phi_{j_1,j_2,j_3}\;:\; (j_1,j_2,j_3)\in\mathbf J,\quad j_1+j_2+j_3\leq N\}$\\[0.7em]
which simultaneously
\begin{enumerate}[i)]
\item Satisfies (\ref{eq:LimitSolvesGenusTwoSchurPieri}) for all $j_1+j_2+j_3\leq N$,
\item Satisfies (\ref{eq:PieriForSchur}) for all $j_1+j_2+j_3\leq N-2$.
\end{enumerate}
}}}\right\}
\end{equation*}

The base of induction is given by $\mathop{\mathrm{Pieri}}(0)$ which holds true for our choice of initial condition $\phi_{0,0,0}=1$. As for the step of induction, we assume that $\mathop{\mathrm{Pieri}}(N)$ holds for some $N\geq0$.

Let $(i_1,i_2,i_3)\in\mathbf J$ be an admissible triple with $i_1+i_2+i_3=N+2$, it follows from Definition \ref{def:AdmissibleTriple} that at least on the triples $(i_1-1,i_2-1,i_3),(i_1-1,i_2,i_3-1),(i_1,i_2-1,i_3-1)$ must be admissible. For each such admissible triple, equations (\ref{eq:PieriForSchur}) provide one linear equation on $\phi_{i_1,i_2,i_3}$. As a result we get at least one, but no more than three equations on $\phi_{i_1,i_2,i_3}$. These equations must be compatible by Proposition 4 of \cite{ArthamonovShakirov'2019} combined with Lemma \ref{lemm:KabIsALimitOfCab}. Next, using together Lemmas \ref{lemm:NonvanishingKab} and \ref{lemm:KabIsALimitOfCab} we conclude that the limit $\lim_{q\rightarrow1}\Psi_{i_1,i_2,i_3}^{(t=q)}$ exists and must be equal to $(j_1+1)(j_2+1)(j_3+1)\phi_{i_1,i_2,i_3}$.

Applying the above argument to all admissible triples $(i_1,i_2,i_3)\in\mathbf J$ at the level $i_1+i_2+i_3=N+2$ we uniquely extend the solution of (\ref{eq:PieriForSchur}) and conclude that $\mathop{\mathrm{Pieri}}(N+2)$ holds true.
\end{proof}

From Proposition \ref{prop:GenusTwoPieriForSchurSolution} we conclude that genus two Schur polynomials have the following symmetry with respect to simultaneous permutations of indexes and variables
\begin{equation}
\phi_{j_1,j_2,j_3}(x_{12},x_{13},x_{23})=\phi_{j_{\sigma_1},j_{\sigma_2},j_{\sigma_2}}(x_{\sigma_1\sigma_2},x_{\sigma_1\sigma_3},x_{\sigma_2\sigma_3}),\qquad \textrm{for all}\quad (j_1,j_2,j_3)\in\mathbf J\quad\quad \textrm{and}\quad \sigma\in S_3.
\label{eq:GenusTwoSchurS3Symmetry}
\end{equation}

\begin{lemma}
Genus two Schur polynomials are normalized to have unit value at the special point $x_{12}=x_{13}=x_{23}=1$:
\begin{equation*}
\phi_{j_1,j_2,j_3}(1,1,1)=1\qquad \textrm{for all}\quad (j_1,j_2,j_3)\in\mathbf J.
\end{equation*}
\label{lemm:GenusTwoSchurNormalizedToSpecialValue}
\end{lemma}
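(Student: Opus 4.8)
The plan is to reduce the claim to a single polynomial identity among the coefficients $K_{a,b}$ and then run the same induction on $N=j_1+j_2+j_3$ that underlies Proposition~\ref{prop:GenusTwoPieriForSchurSolution}. Write $f_{j_1,j_2,j_3}:=\phi_{j_1,j_2,j_3}(1,1,1)$, with the convention $f_{j_1,j_2,j_3}=0$ on non-admissible triples. Evaluating each relation in (\ref{eq:PieriForSchur}) at the symmetric point $x_{12}=x_{13}=x_{23}=1$ replaces every multiplication operator $x_{ij}+x_{ij}^{-1}$ by the scalar $2$, so the three relations collapse to purely numerical recursions; for instance (\ref{eq:PieriForSchur12}) becomes
\begin{equation*}
2\,f_{j_1,j_2,j_3}=\sum_{a,b\in\{-1,+1\}}K_{a,b}(j_1,j_2,j_3)\,f_{j_1+a,j_2+b,j_3},
\end{equation*}
together with the two analogues coming from (\ref{eq:PieriForSchur13}) and (\ref{eq:PieriForSchur23}).

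The key input, and the main (indeed essentially the only) obstacle, is the identity
\begin{equation*}
\sum_{a,b\in\{-1,+1\}}K_{a,b}(j_1,j_2,j_3)=2\qquad\text{for all }j_1,j_2,j_3,
\end{equation*}
which I would verify directly from (\ref{eq:Kab}). Setting $s=j_1+j_2$ and $d=j_1-j_2$, each of the four numerators factors as a difference of squares: the $(+,+)$ and $(-,-)$ terms contribute $(s+3)^2-(j_3+1)^2$ and $(s+1)^2-(j_3+1)^2$, while the $(+,-)$ and $(-,+)$ terms contribute $-(d+1)^2+(j_3+1)^2$ and $-(d-1)^2+(j_3+1)^2$. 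The four copies of $(j_3+1)^2$ cancel, and the remainder simplifies to $2(s^2-d^2)+8(s+1)=8\,j_1 j_2+8(j_1+j_2+1)=8(j_1+1)(j_2+1)$; dividing by the common denominator $4(j_1+1)(j_2+1)$ yields $2$. This is a short algebraic manipulation that I would carry out explicitly.

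With the identity established, the constant value $1$ is a formal solution of the numerical recursion, and I would promote this to the actual statement by induction on $N=j_1+j_2+j_3$. The base case $N=0$ is $f_{0,0,0}=\phi_{0,0,0}(1,1,1)=1$. For the inductive step, fix an admissible triple $(i_1,i_2,i_3)$ with $i_1+i_2+i_3=N+2$. As noted in the proof of Proposition~\ref{prop:GenusTwoPieriForSchurSolution}, at least one of $(i_1-1,i_2-1,i_3)$, $(i_1-1,i_2,i_3-1)$, $(i_1,i_2-1,i_3-1)$ is admissible; choosing the matching relation among (\ref{eq:PieriForSchur12})--(\ref{eq:PieriForSchur23}), the triple $(i_1,i_2,i_3)$ occurs precisely as the $(+,+)$-term, while every other triple on the right-hand side lies at level $\le N$. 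By Lemma~\ref{lemm:NonvanishingKab} the accompanying coefficient $K_{+,+}$ is non-zero, and the coefficients of any non-admissible neighbours vanish, so those spurious $f=0$ terms do no harm. Substituting the inductive hypothesis $f=1$ for all level-$\le N$ admissible neighbours and using $\sum_{a,b}K_{a,b}=2$ to rewrite $\sum_{(a,b)\ne(+,+)}K_{a,b}=2-K_{+,+}$, the recursion reads $2=K_{+,+}\,f_{i_1,i_2,i_3}+(2-K_{+,+})$, whence $f_{i_1,i_2,i_3}=1$. Ranging over all admissible triples at level $N+2$ completes the step, and the symmetry (\ref{eq:GenusTwoSchurS3Symmetry}) together with the invariance of the point $(1,1,1)$ under $S_3$ guarantees that the choice of Pieri relation is immaterial.
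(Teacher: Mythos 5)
Your proposal is correct and follows essentially the same route as the paper: induction on $N=j_1+j_2+j_3$, specializing the Pieri relation at an admissible lower triple so the target appears as the $(+,+)$-term, and using Lemma \ref{lemm:NonvanishingKab} to substitute $1$ harmlessly at non-admissible neighbours. The only difference is cosmetic: you isolate the identity $\sum_{a,b\in\{\pm1\}}K_{a,b}(j_1,j_2,j_3)=2$ (which checks out, since the numerators sum to $(s+3)^2+(s+1)^2-(d+1)^2-(d-1)^2=8(j_1+1)(j_2+1)$ with $s=j_1+j_2$, $d=j_1-j_2$) as an explicit lemma, whereas the paper buries the same cancellation in the ``after simplification'' step following its explicit recursion (\ref{eq:phi1recursionproof}).
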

\begin{proof}
We will prove the statement of the Lemma by induction in $i_1+i_2+i_3=N$. The base case $N=0$ is given by initial condition. As for the step of induction we assume that the statement of the Lemma holds for all $n\leq N$. Let $(i_1,i_2,i_3)\in\mathbf J$ be an admissible triple with $i_1+i_2+i_3=N+2$, then at least one of the triples $(i_1-1,i_2-1,i_3), (i_1-1,i_2,i_3-1), (i_1,i_2-1,i_3-1)$ must be admissible. By (\ref{eq:GenusTwoSchurS3Symmetry}) we can assume without loss of generality that $(i_1-1,i_2-1,i_3)$ is admissible. Specializing (\ref{eq:PieriForSchur12}) to $x_{12}=x_{13}=x_{23}=1$ and $(j_1,j_2,j_3)=(i_1-1,i_2-1,i_3)$ we get necessary condition
\begin{equation}
\begin{split}
\phi_{i_1,i_2,i_3}(1,1,1)=&-\frac1{(i_1+i_2-i_3)(i_1+i_2+i_3+2)}\Big((i_1+i_2+i_3)(i_1+i_2+i_3-2)\phi_{i_1-2,i_2-2,i_3}(1,1,1)\\[0.3em]
&+(i_1-i_2+i_3)(i_2+i_3-i_1+2)\phi_{i_1-2,i_2,i_3}(1,1,1)-8 i_1 i_2\phi_{i_1-1,i_2-1,i_3}(1,1,1)\\[0.3em]
&-(i_1-i_2-i_3)(i_1+i_3-i_2+2)\phi_{i_1,i_2-2,i_3}(1,1,1)\Big).
\end{split}
\label{eq:phi1recursionproof}
\end{equation}

Note that coefficient in $\phi_{i_1-2,i_2-2,i_3}(1,1,1)$ vanishes precisely when the triple $(i_1-2,i_2-2,i_3)$ is nonadmissible. Same property holds true for the coefficients in $\phi_{i_1-2,i_2,i_3}(1,1,1)$ and $\phi_{i_1,i_2-2,i_3}(1,1,1)$. Combining the inductive assumption with these properties of the coefficients we conclude that without loss of generality one can always effectively substitute $\phi_{i_1-2,i_2-2,i_3}(1,1,1)=\phi_{i_1-2,i_2,i_3}(1,1,1)=\phi_{i_1-1,i_2-1}(1,1,1)=\phi_{i_1,i_2-2,i_3}(1,1,1)=1$ into (\ref{eq:phi1recursionproof}) regardless of whether the triples $(i_1-2,i_2-2,i_3), (i_1-2,i_2,i_3),$ and $(i_1,i_2-2,i_3)$ are actually admissible. This, after simplification, gives $\phi_{i_1,i_2,i_3}(1,1,1)=1$ which proves the step of induction.
\end{proof}

\begin{lemma}
For every admissible triple $(j_1,j_2,j_3)\in\mathbf J$ the top degree part of the genus two Schur polynomial $\phi_{j_1,j_2,j_3}$ has total degree $(j_1+j_2+j_3)/2$ and consists of a single monomial of the form
\begin{subequations}
\begin{equation}
c\, x_{12}^{d_3}x_{13}^{d_2}x_{23}^{d_1},
\end{equation}
where
\begin{equation}
d_{1} = \dfrac{-j_1+j_2+j_3}{2}, \qquad \ d_{2} = \dfrac{j_1-j_2+j_3}{2}, \qquad d_{3} = \dfrac{j_1+j_2-j_3}{2}.
\label{eq:ChangeBetweenJD}
\end{equation}
\end{subequations}
and $c\in\mathbb C\backslash\{0\}$ is a nonzero constant.
\label{lemm:LeadingTermSchur}
\end{lemma}
\begin{proof}
Use Lemma \ref{lemm:NonvanishingKab} and induction by $j_1+j_2+j_3=N$.
\end{proof}

\begin{proposition}
The collection of genus two Schur polynomials forms a $\mathbb C$-basis on $\mathcal H$
\begin{equation}
\left\{\phi_{j_1,j_2,j_3}\;|\; (j_1,j_2,j_3)\in\mathbf J\right\}.
\label{eq:SchurBasisH}
\end{equation}
\end{proposition}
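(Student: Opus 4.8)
The plan is to exploit the triangular relationship between the genus two Schur polynomials and the obvious monomial basis of $\mathcal H$ supplied by Lemma \ref{lemm:LeadingTermSchur}. By (\ref{eq:HDefinition}), $\mathcal H$ is an honest polynomial ring $\mathbb C[u_{12},u_{13},u_{23}]$ in the three $\mathbb Z_2$-invariant generators $u_{ij}=x_{ij}+x_{ij}^{-1}$, so the monomials $m_{d_1,d_2,d_3}:=u_{12}^{d_3}u_{13}^{d_2}u_{23}^{d_1}$, indexed by $(d_1,d_2,d_3)\in\mathbb Z_{\geq0}^3$, already form a $\mathbb C$-basis of $\mathcal H$. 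It therefore suffices to show that the change of basis from $\{m_{d_1,d_2,d_3}\}$ to $\{\phi_{j_1,j_2,j_3}\}$ is invertible.

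First I would record that the substitution (\ref{eq:ChangeBetweenJD}) is a bijection between admissible triples and $\mathbb Z_{\geq0}^3$: its inverse is $j_1=d_2+d_3$, $j_2=d_1+d_3$, $j_3=d_1+d_2$, the conditions $d_1,d_2,d_3\geq0$ are precisely the triangle inequality, and integrality of the $d_i$ is precisely the parity condition in (\ref{eq:AdmissibilityDefinition}). Moreover this bijection preserves total degree, since $d_1+d_2+d_3=(j_1+j_2+j_3)/2$. Thus for each even $N$ it restricts to a bijection between admissible triples with $j_1+j_2+j_3=N$ and monomials $m_{d_1,d_2,d_3}$ of total $u$-degree $N/2$.

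Next I would filter $\mathcal H$ by total degree in the generators $u_{ij}$, which coincides with the top total degree in the $x$-variables because the leading $x$-monomial of $m_{d_1,d_2,d_3}$ is $x_{12}^{d_3}x_{13}^{d_2}x_{23}^{d_1}$. Writing $\phi_{j_1,j_2,j_3}=\sum_\alpha c_\alpha\,u_{12}^{\alpha_1}u_{13}^{\alpha_2}u_{23}^{\alpha_3}$ and comparing top-degree parts, Lemma \ref{lemm:LeadingTermSchur} forces the only monomial of degree $N/2$ occurring to be $m_{d_1,d_2,d_3}$ with the $d_i$ from (\ref{eq:ChangeBetweenJD}), appearing with the nonzero coefficient $c$. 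In other words $\phi_{j_1,j_2,j_3}=c\,m_{d_1,d_2,d_3}+(\text{terms of }u\text{-degree}<N/2)$.

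Finally, I would order the index set $\mathbf J$ by the total degree $N=j_1+j_2+j_3$. The two preceding paragraphs show that, with respect to the induced filtration, the matrix expressing the $\phi_{j_1,j_2,j_3}$ in the monomial basis $\{m_{d_1,d_2,d_3}\}$ is triangular, and that on each graded piece (fixed $N$) it becomes, after matching indices through the bijection of (\ref{eq:ChangeBetweenJD}), a diagonal matrix whose entries are the nonzero constants $c$ of Lemma \ref{lemm:LeadingTermSchur}. Such a matrix is invertible, so $\{\phi_{j_1,j_2,j_3}\mid(j_1,j_2,j_3)\in\mathbf J\}$ is indeed a $\mathbb C$-basis of $\mathcal H$. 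The one point deserving genuine care is this graded bijectivity: the triangular shape alone is insufficient, and one needs the degree-preserving index bijection of the second paragraph precisely to guarantee that the diagonal blocks are square with nonzero determinant rather than merely triangular.
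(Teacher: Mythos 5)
Your proof is correct, but it takes a genuinely different route from the paper on the spanning half of the statement. The paper, like you, gets linear independence from Lemma \ref{lemm:LeadingTermSchur} (distinct leading monomials), but for spanning it argues in one line from the multiplicative structure: by the Pieri rule (\ref{eq:PieriForSchur}), the span of the $\phi_{j_1,j_2,j_3}$ contains $\phi_{0,0,0}=1$ and is closed under multiplication by the generators $x_{ij}+x_{ij}^{-1}$ of $\mathcal H$ from (\ref{eq:HDefinition}), hence is all of $\mathcal H$. You instead prove spanning by a graded dimension count: the degree-preserving bijection $(j_1,j_2,j_3)\leftrightarrow(d_1,d_2,d_3)$ of (\ref{eq:ChangeBetweenJD}) between $\mathbf J$ and $\mathbb Z_{\geq 0}^3$ (which you verify correctly, including that integrality of the $d_i$ encodes the parity condition) makes the transition matrix to the monomial basis $u_{12}^{d_3}u_{13}^{d_2}u_{23}^{d_1}$ block-triangular for the degree filtration, with square diagonal blocks that are in fact diagonal with the nonzero entries $c$ of Lemma \ref{lemm:LeadingTermSchur}; since each filtered piece is finite-dimensional and the filtration is exhaustive, invertibility follows. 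Your bridging step from the $x$-leading term to the $u$-leading term is the one place needing care, and you handle it: a $u$-monomial of $u$-degree $k$ has top $x$-degree exactly $k$ with leading $x$-monomial of coefficient $1$, so the single-monomial top part forced by Lemma \ref{lemm:LeadingTermSchur} pins down the unique top $u$-monomial. Comparing the two: the paper's spanning argument is shorter and exploits that the Pieri rule is literally the multiplication rule for the generators, while yours is self-contained given the leading-term lemma and yields slightly more information, namely that the $\phi_{j_1,j_2,j_3}$ with $j_1+j_2+j_3\leq 2k$ form a basis of the subspace of $u$-degree at most $k$ (a triangular change of basis), and your closing remark that triangularity alone is insufficient without the degree-preserving index bijection making the diagonal blocks square is exactly the right point to flag.
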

\begin{proof}
By Lemma \ref{lemm:LeadingTermSchur} we know that all $\phi_{j_1,j_2,j_3}$ with $(j_1,j_2,j_3)\in\mathbf J$ have distinct top degree terms and must be linearly independent. It remains to show that together they span $\mathcal H$. Indeed, from (\ref{eq:PieriForSchur}) it follows that the span of (\ref{eq:SchurBasisH}) is closed under multiplication by $x_{ij}+x_{ij}^{-1},\;1\leq i<j\leq 3$, the generators of the ring $\mathcal H$ and also contains the unit $\phi_{0,0,0}=1$.
\end{proof}

Consider the following three differential operators acting on $\mathbb C(x_{12},x_{13},x_{23})$, the rational functions in three variables
\begin{subequations}
\begin{align}
\begin{split}
    \hat H_1:=&x_{12}^2 \frac{\partial^2}{\partial x_{12}^2}+x_{13}^2\frac{\partial^2}{\partial x_{13}^2}+\frac{2(x_{12}^2+1)(x_{13}^2+1) - 4x_{12}x_{13}(x_{23} + x_{23}^{-1} )}{(x_{12}-x_{12}^{-1})(x_{13}-x_{13}^{-1})} \frac{\partial^2}{\partial x_{12}\partial x_{13}}\\
    &\qquad+\frac{3x_{12}^2+1}{x_{12}-x_{12}^{-1}}\frac{\partial}{\partial x_{12}}+\frac{3x_{13}^2+1}{x_{13}-x_{13}^{-1}}\frac{\partial}{\partial x_{13}}+1,
\end{split}
\\[1em]
\begin{split}
    \hat H_2:=&x_{12}^2\frac{\partial^2}{\partial x_{12}^2}+x_{23}^2 \frac{\partial^2}{\partial x_{23}^2}+\frac{2(x_{12}^2+1)(x_{23}^2+1)-4x_{12}x_{23}(x_{13}+x_{13}^{-1})}{(x_{12}-x_{12}^{-1})(x_{23}-x_{23}^{-1})}\frac{\partial^2}{\partial x_{23}\partial x_{23}}\\
    &\qquad+\frac{3x_{12}^2+1}{x_{12}-x_{12}^{-1}}\frac{\partial}{\partial x_{12}}+\frac{3x_{23}^2+1}{x_{23}-x_{23}^{-1}}\frac{\partial}{\partial x_{23}}+1,
\end{split}
\\[1em]
\begin{split}
    \hat H_3:=&x_{13}^2\frac{\partial^2}{\partial x_{13}^2}+x_{23}^2\frac{\partial^2}{\partial x_{23}^2}+\frac{2(x_{13}^2+1)(x_{23}^2+1)-4x_{13}x_{23}(x_{12}+x_{12}^{-1})}{(x_{13}-x_{13}^{-1})(x_{23}-x_{23}^{-1})}\frac{\partial^2}{\partial x_{13}\partial x_{23}}\\
    &\qquad+\frac{3x_{13}^2+1}{x_{13}-x_{13}^{-1}}\frac{\partial}{\partial x_{13}}+\frac{3x_{23}^2+1}{x_{23}-x_{23}^{-1}}\frac{\partial}{\partial x_{23}}+1.
\end{split}
\end{align}
\label{eq:Hamiltonians}
\end{subequations}

\begin{proposition}
Genus two Schur polynomials are common eigenfunctions of differential operators (\ref{eq:Hamiltonians}):
\begin{equation}
\hat H_i\phi_{j_1,j_2,j_3}=(j_i+1)^2\phi_{j_1,j_2,j_3}\qquad\textrm{for all}\quad (j_1,j_2,j_3)\in\mathbf J,\quad 1\leq i\leq3.
\label{eq:GenusTwoSchurH1Eigenvectors}
\end{equation}
\label{prop:GenusTwoSchurHiEigenfunctions}
\end{proposition}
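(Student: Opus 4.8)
The plan is to reduce the three equations to one and then propagate it along the Pieri recursion. First I would note that the operators in (\ref{eq:Hamiltonians}) are interchanged by the strand relabelling underlying (\ref{eq:GenusTwoSchurS3Symmetry}): the substitution $x_{13}\leftrightarrow x_{23}$ carries $\hat H_1$ exactly into $\hat H_2$, and the other transpositions act analogously, so that $\hat H_i$ is attached to the index $i$ in the same way as the eigenvalue $(j_i+1)^2$. Combined with the symmetry (\ref{eq:GenusTwoSchurS3Symmetry}) of the $\phi_{j_1,j_2,j_3}$, it therefore suffices to prove the single equation $\hat H_1\phi_{j_1,j_2,j_3}=(j_1+1)^2\phi_{j_1,j_2,j_3}$ and to transport it to $\hat H_2,\hat H_3$. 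I would establish this by induction on $N=j_1+j_2+j_3$; the base case $N=0$ is the immediate check $\hat H_1\cdot 1=1$, since every derivative annihilates the constant $\phi_{0,0,0}=1$ and only the additive term $+1$ in $\hat H_1$ survives.

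The engine of the induction will be the commutators of $\hat H_1$ with the operators $M_{ij}$ of multiplication by $x_{ij}+x_{ij}^{-1}$. Because $\hat H_1$ contains no $\partial_{23}$ and depends on $x_{23}$ only through $x_{23}+x_{23}^{-1}$, one has $[\hat H_1,M_{23}]=0$; this already settles the inductive step whenever the new level-$(N+2)$ triple is reached by the $M_{23}$-move (\ref{eq:PieriForSchur23}), which fixes $j_1$ and hence the eigenvalue $(j_1+1)^2$. The only genuine computation is $[\hat H_1,M_{12}]$, which I expect to collapse to the first-order operator
\[
\begin{aligned}
[\hat H_1, M_{12}] ={}& 3\,(x_{12}+x_{12}^{-1}) + 2\,(x_{12}^2-1)\,\frac{\partial}{\partial x_{12}}\\
&{}+ \frac{2(x_{12}^2+1)(x_{13}^2+1)-4x_{12}x_{13}(x_{23}+x_{23}^{-1})}{x_{12}\,(x_{13}-x_{13}^{-1})}\,\frac{\partial}{\partial x_{13}},
\end{aligned}
\]
together with its $x_{12}\leftrightarrow x_{13}$ mirror $[\hat H_1,M_{13}]$.

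To run the step I would apply $\hat H_1$ to the Pieri identity (\ref{eq:PieriForSchur12}) written at a level-$N$ triple, insert the induction hypothesis for all summands of total degree $\le N$, and re-expand $M_{12}\phi_{j_1,j_2,j_3}$ once more by (\ref{eq:PieriForSchur12}). The top summand $\phi_{j_1+1,j_2+1,j_3}$, whose coefficient $K_{+1,+1}(j_1,j_2,j_3)$ is nonzero by Lemma \ref{lemm:NonvanishingKab}, is then isolated, and demanding that its eigenvalue equal $(j_1+2)^2$ collapses the entire step to the single first-order identity
\[
\begin{aligned}
[\hat H_1, M_{12}]\,\phi_{j_1,j_2,j_3}
={}& (2j_1+3)\bigl(K_{+1,+1}\,\phi_{j_1+1,j_2+1,j_3}+K_{+1,-1}\,\phi_{j_1+1,j_2-1,j_3}\bigr)\\
&{}- (2j_1+1)\bigl(K_{-1,+1}\,\phi_{j_1-1,j_2+1,j_3}+K_{-1,-1}\,\phi_{j_1-1,j_2-1,j_3}\bigr),
\end{aligned}
\]
all $K$ being evaluated at $(j_1,j_2,j_3)$. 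Granting this identity, uniqueness of the Pieri solution in Proposition \ref{prop:GenusTwoPieriForSchurSolution} upgrades the matched coefficients into the full eigenvalue equation at level $N+2$, and the case where the new triple is reached by the $M_{13}$-move is handled by the mirror identity.

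The hard part is precisely this first-order identity: it is the one place where information about $\phi_{j_1,j_2,j_3}$ beyond its leading monomial (Lemma \ref{lemm:LeadingTermSchur}) is needed. I would attack it by a secondary induction, exploiting that the symbol closes — a direct computation gives $[[\hat H_1,M_{12}],M_{12}]=2\bigl((x_{12}+x_{12}^{-1})^2-4\bigr)$ — so that $\hat H_1$ and the $M_{ij}$ generate a quadratic algebra in which the first-order operators $[\hat H_1,M_{ij}]$ can be rewritten against the explicit Pieri weights (\ref{eq:Kab}). Should this bookkeeping prove unwieldy, the alternative is to abandon the self-contained route and obtain the eigenvalue equation as the $t=q\to 1$ degeneration of the spectral equation for the genus two Macdonald polynomials $\Psi_{j_1,j_2,j_3}$ of \cite{ArthamonovShakirov'2019}, whose eigenvalue tends to $(j_i+1)^2$ in the limit defining $\Phi_{j_1,j_2,j_3}$.
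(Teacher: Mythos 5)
Your reduction via the $S_3$ symmetry (\ref{eq:GenusTwoSchurS3Symmetry}) is sound, and your computations check out: with $M_{ij}$ denoting multiplication by $x_{ij}+x_{ij}^{-1}$, one indeed has $[\hat H_1,M_{23}]=0$, your displayed formula for $[\hat H_1,M_{12}]$, and $[[\hat H_1,M_{12}],M_{12}]=2\bigl(M_{12}^2-4\bigr)$; likewise the $M_{23}$-move does settle every triple whose predecessor $(i_1,i_2-1,i_3-1)$ is admissible, since all four Pieri terms then share the eigenvalue $(i_1+1)^2$. But the argument has a genuine gap exactly where you flag it: the ``single first-order identity'' is never proven, and it is not an auxiliary fact --- it carries the full content of the proposition. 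Indeed, granting the eigenvalue equation one computes
\[
[\hat H_1,M_{12}]\,\phi_{j_1,j_2,j_3}=\sum_{a,b\in\{\pm1\}}K_{a,b}(j_1,j_2,j_3)\bigl((j_1+a+1)^2-(j_1+1)^2\bigr)\phi_{j_1+a,j_2+b,j_3},
\]
which is precisely your identity, so deferring its proof to a ``secondary induction'' merely relocates the entire difficulty. That secondary induction is only gestured at: beyond the double-commutator relation you would need the quadratic identities among the $K_{a,b}$ produced by two-step Pieri expansions, the mirror identity for $[\hat H_1,M_{13}]$ (unavoidable, because triples with $i_3=i_1+i_2$ have no admissible $M_{12}$-predecessor $(i_1-1,i_2-1,i_3)$), and a separate treatment of the family $(0,n,n)$, reachable only by the $M_{23}$-move. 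None of this is carried out, so the self-contained route remains a plan rather than a proof.

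Your fallback, by contrast, is exactly the paper's proof: Proposition \ref{prop:GenusTwoSchurHiEigenfunctions} is obtained there in one line by combining the limit formula (\ref{eq:LimitSolvesGenusTwoSchurPieri}) with Proposition 13 of \cite{ArthamonovShakirov'2019}, i.e.\ the $t=q\rightarrow1$ degeneration of the spectral equations for genus two Macdonald polynomials, in the same spirit as Lemma \ref{lemm:KabIsALimitOfCab} handles the Pieri coefficients. Had you committed to that route --- checking that the difference operators degenerate to (\ref{eq:Hamiltonians}) and their eigenvalues to $(j_i+1)^2$ under the normalization (\ref{eq:PhiLimitDefinition}) --- the proof would be complete; as written, your primary argument is not.
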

\begin{proof}
Combine formula (\ref{eq:LimitSolvesGenusTwoSchurPieri}) with Proposition 13 of \cite{ArthamonovShakirov'2019}.
\end{proof}

\subsection*{Series expansion of Genus two Schur polynomials.}

Below we investigate the the series expansion of Genus two Schur polynomials at $x_{12}=x_{13}=x_{23}=1$. For this purpose, consider the following substitution
\begin{equation}
x_{12}=1+X_{12},\qquad x_{13}=1+X_{13},\qquad x_{23}=1+X_{23}.
\label{eq:ChangeOfVariablesxToX}
\end{equation}
Let $\phi_{j_1,j_2,j_3}^{(m)}$ denote a homogeneous component of degree $m$ of the series expansion of the Genus two Schur polynomial:
\begin{equation*}
\phi_{j_1,j_2,j_3}=\sum_{m=0}^{+\infty}\phi_{j_1,j_2,j_3}^{(m)},\qquad \textrm{for all}\quad (j_1,j_2,j_3)\in\mathbf J.
\end{equation*}

Note that by Lemma \ref{lemm:GenusTwoSchurNormalizedToSpecialValue} that this series expansion has unit leading term $\phi_{j_1,j_2,j_3}^{(0)}=1$ for all $(j_1,j_2,j_3)\in\mathbf J$. Moreover, because $\phi_{j_1,j_2,j_3}$ is symmetric with respect to change $x_{i,j}\leftrightarrow x_{i,j}$ for all $1\leq i<j\leq3$, the homogeneous component of degree one must vanish $\phi_{j_1,j_2,j_3}^{(1)}=0$ for all $(j_1,j_2,j_3)\in\mathbf J$.

The coefficients of differential operators (\ref{eq:Hamiltonians}) are all rational functions in variables $x_{12},x_{13},x_{23}$. After the change of variables (\ref{eq:ChangeOfVariablesxToX}) we can equivalently view them as operators acting on formal power series in $X_{12},X_{13},X_{23}$ which take values in Laurent power series
\begin{equation}
\hat H_k:\mathbb C[[X_{12},X_{13},X_{23}]]\rightarrow\mathbb C(\!(X_{12},X_{13},X_{23})\!),\qquad \hat H_k=\sum_{m=-2}^{+\infty}\hat H_k^{(m)},\qquad 1\leq k\leq 3.
\label{eq:HkHomogeneousComponentDecompsition}
\end{equation}
where $\hat H_k^{(m)}$ stands for the homogeneous component of degree $m$. The leading coefficients of degree $m=-2$ read
\begin{subequations}
\begin{align}
\hat H_1^{(-2)}=&\frac{\partial^2}{\partial X_{12}^2}+\frac{\partial^2}{\partial X_{13}^2}+\frac{X_{12}^2+X_{13}^2-X_{23}^2}{X_{12}X_{13}}\frac{\partial^2}{\partial X_{12}\partial X_{13}}+\frac2{X_{12}}\frac{\partial}{\partial X_{12}}+\frac2{X_{13}}\frac{\partial}{\partial X_{13}},
\label{eq:H1m2}\\[0.3em]
\hat H_2^{(-2)}=&\frac{\partial^2}{\partial X_{12}^2}+\frac{\partial^2}{\partial X_{23}^2}+\frac{X_{12}^2+X_{23}^2-X_{13}^2}{X_{12}X_{23}}\frac{\partial^2}{\partial X_{12}\partial X_{23}}+\frac2{X_{12}}\frac{\partial}{\partial X_{12}}+\frac2{X_{23}}\frac{\partial}{\partial X_{23}},
\label{eq:H2m2}\\[0.3em]
\hat H_3^{(-2)}=&\frac{\partial^2}{\partial X_{13}^2}+\frac{\partial^2}{\partial X_{23}^2}+\frac{X_{13}^2+X_{23}^2-X_{12}^2}{X_{13}X_{23}}\frac{\partial^2}{\partial X_{13}\partial X_{23}}+\frac2{X_{13}}\frac{\partial}{\partial X_{13}}+\frac2{X_{23}}\frac{\partial}{\partial X_{23}}.
\label{eq:H3m2}
\end{align}
\label{eq:Hm2}
\end{subequations}

\begin{proposition}
Let $p\in\mathbb C[X_{12},X_{13},X_{23}]$ be a polynomial annihilated by all three of the operators (\ref{eq:Hm2}), then necessarily it is a constant polynomial $p\in\mathbb C$. In other words,
\begin{equation*}
\ker\hat H_1^{(-2)}\cap\ker\hat H_2^{(-2)}\cap\ker\hat H_3^{(-2)}=\mathbb C.
\end{equation*}
\label{prop:KernelHm2}
\end{proposition}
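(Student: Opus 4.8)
The plan is to recognize the three operators (\ref{eq:Hm2}) as ordinary Laplacians in disguise. Introduce three points $\vec r_1,\vec r_2,\vec r_3\in\mathbb R^3$ and set $X_{ij}=|\vec r_i-\vec r_j|$ for their pairwise distances. I claim that for any function $F(X_{12},X_{13},X_{23})$ one has $\hat H_k^{(-2)}F=\Delta_{\vec r_k}F$, where $\Delta_{\vec r_k}$ is the flat Laplacian in the $k$-th point. This is a direct chain-rule computation: in $\mathbb R^3$ one has $|\nabla_{\vec r_k}X_{kj}|^2=1$ and $\Delta_{\vec r_k}X_{kj}=2/X_{kj}$, which reproduce the single-derivative and the pure second-derivative terms, while the mixed coefficient comes from $\nabla_{\vec r_k}X_{ki}\cdot\nabla_{\vec r_k}X_{kj}=\frac{(\vec r_k-\vec r_i)\cdot(\vec r_k-\vec r_j)}{X_{ki}X_{kj}}$ together with the law of cosines $(\vec r_k-\vec r_i)\cdot(\vec r_k-\vec r_j)=\tfrac12(X_{ki}^2+X_{kj}^2-X_{ij}^2)$. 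Matching against (\ref{eq:H1m2})--(\ref{eq:H3m2}) confirms $\hat H_1^{(-2)}=\Delta_{\vec r_1}$, $\hat H_2^{(-2)}=\Delta_{\vec r_2}$, $\hat H_3^{(-2)}=\Delta_{\vec r_3}$ on functions of the distances.

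Given $p$ in the triple kernel, I would lift it to $P(\vec r_1,\vec r_2,\vec r_3):=p(X_{12},X_{13},X_{23})$, a continuous function on $\mathbb R^9$. Away from the diagonals $\{\vec r_i=\vec r_j\}$ the distances are smooth, so $\Delta_{\vec r_k}P=(\hat H_k^{(-2)}p)(X)=0$ and hence $\Delta_{\mathbb R^9}P=\sum_k\Delta_{\vec r_k}P=0$; thus $P$ is harmonic off the diagonals. Since the diagonals form a closed set of codimension $3$ and $P$ is locally bounded (being continuous), the removable-singularity theorem lets $P$ extend to a harmonic function on all of $\mathbb R^9$. Because $X_{ij}\le|\vec r_i|+|\vec r_j|$, the function $P$ has polynomial growth of degree $\le\deg p$, so the polynomial-growth Liouville theorem forces $P$ to be a harmonic polynomial. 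As $P$ depends on the points only through their distances it is invariant under simultaneous translations and rotations; by the first fundamental theorem for $O(3)$ it is therefore a polynomial $Q(a,b,c)$ in $a=\vec u\cdot\vec u$, $b=\vec u\cdot\vec v$, $c=\vec v\cdot\vec v$, where $\vec u=\vec r_1-\vec r_2$ and $\vec v=\vec r_1-\vec r_3$. Comparing $p(X)$ with $Q(X_{12}^2,X_{13}^2,X_{23}^2)$ on the open set of genuine triangles then shows that $p$ is in fact an (even) polynomial in the squared distances.

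It remains to translate the kernel conditions into constraints on $Q$ and to show $Q$ is constant. In the variables $\vec u,\vec v$ one has $\Delta_{\vec r_2}=\Delta_{\vec u}$, $\Delta_{\vec r_3}=\Delta_{\vec v}$ and $\Delta_{\vec r_1}=\Delta_{\vec u}+\Delta_{\vec v}+2\,\nabla_{\vec u}\!\cdot\!\nabla_{\vec v}$, so the three equations are equivalent to
\[
\Delta_{\vec u}P=\Delta_{\vec v}P=\nabla_{\vec u}\!\cdot\!\nabla_{\vec v}\,P=0 .
\]
These are precisely the three lowering operators of the Howe dual pair $(O(3),\mathfrak{sp}_4)$ acting on polynomials in the six coordinates of $(\vec u,\vec v)$, and the $O(3)$-invariant elements of their joint kernel are exactly the constants: the invariants $\mathbb C[a,b,c]$ form the $\mathfrak{sp}_4$-module generated from the vacuum $1$, whose lowest-weight space (the joint kernel of the Laplacians) is one-dimensional. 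Hence $Q$, and therefore $p$, is constant. Alternatively, and avoiding representation theory, I would observe that the induced operators on $Q(a,b,c)$ preserve total degree, so they reduce to a finite linear system on each homogeneous component, and check directly that this system has only the trivial solution in positive degree.

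The main obstacle is this last step: proving that the only $O(3)$-invariant joint harmonic is constant. Everything preceding it is bookkeeping once the Laplacian identification is in hand, and the analytic lift (removability plus Liouville) is standard given continuity and polynomial growth. The cleanest justification of the final step is the Howe-duality statement above; if one prefers an elementary argument, the degree-by-degree linear-algebra computation works, but it must be organized so that the vanishing in \emph{arbitrary} degree is transparent rather than merely verified in a few low degrees.
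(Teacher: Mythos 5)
Your proposal is correct, but it takes a genuinely different route from the paper. The paper stays entirely inside the ring $\mathbb C[X_{12},X_{13},X_{23}]$: it observes that $X_{12}X_{13}\hat H_1^{(-2)}$ becomes a difference of Legendre operators in $U=(X_{12}-X_{13})/X_{23}$, $V=(X_{12}+X_{13})/X_{23}$, so that the degree-$m$ kernel of $\hat H_1^{(-2)}$ is spanned by $\mathfrak P_{m,l,l}=X_{23}^m\mathcal P_l(U)\mathcal P_l(V)$, and then determines the pairwise kernels $\ker\hat H_1^{(-2)}\cap\ker\hat H_j^{(-2)}$, $j=2,3$, by a leading-term analysis in lexicographic order: in each even degree $2n$ these are one-dimensional, spanned by two explicit non-proportional vectors $\phi_{2n}^{(1,2)}\neq\phi_{2n}^{(1,3)}$, which kills the triple intersection. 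You instead recognize the operators (\ref{eq:Hm2}) as the vertex Laplacians of a triangle in $\mathbb R^3$ --- your chain-rule computation with $|\nabla_{\vec r_k}X_{kj}|=1$, $\Delta_{\vec r_k}X_{kj}=2/X_{kj}$ and the law of cosines does match (\ref{eq:H1m2})--(\ref{eq:H3m2}) exactly --- and then run potential theory (removability across the codimension-$3$ diagonals for the locally bounded lift, polynomial-growth Liouville), the first fundamental theorem for $O(3)$, and Howe duality for $(O(3),\mathfrak{sp}_4)$; the reduction of the three kernel conditions to $\Delta_{\vec u}P=\Delta_{\vec v}P=\nabla_{\vec u}\!\cdot\!\nabla_{\vec v}P=0$ is also right, since $\Delta_{\vec r_1}=\Delta_{\vec u}+\Delta_{\vec v}+2\nabla_{\vec u}\!\cdot\!\nabla_{\vec v}$. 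The one place your write-up is elliptical is the step you yourself flag: ``the lowest-weight space is one-dimensional'' is not by itself the statement you need. What you need is that the joint kernel of the three lowering operators inside $\mathbb C[a,b,c]$ \emph{equals} the lowest-weight space, and this follows from the irreducibility of the $O(3)$-invariants as a lowest-weight $\mathfrak{sp}_4$-module --- Howe's duality theorem for the compact dual pair, which does hold here even though $(n,k)=(3,2)$ lies outside the stable range $n\geq 2k$ --- combined with the standard remark that a joint-annihilated vector of positive degree would generate a proper graded submodule missing the constants; a citation (Howe, or Kashiwara--Vergne, or Goodman--Wallach) should be supplied. In exchange for this heavier input your argument is conceptual, explains \emph{why} the three operators interact well (they are honest Laplacians), and would generalize to more marked points, whereas the paper's proof is elementary and self-contained and yields strictly finer output, namely explicit bases for the pairwise kernels in every degree, not merely the triviality of the triple intersection.
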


The proof of this proposition is intriguing in its own right but does not provide significant insight for the remainder of the manuscript. Consequently, we have omitted it from the main text and presented it in Appendix \ref{sec:ProofPropositionKernelHm2}.

Expanding both sides of (\ref{eq:LimitSolvesGenusTwoSchurPieri}) as a Laurent series in $X_{12},X_{13},X_{23}$ we get
\begin{equation}
\sum_{m=-2}^{+\infty}\hat H_k^{(m)}\phi_{j_1,j_2,j_3}^{(l-m)}=(j_k+1)^2\phi_{j_1,j_2,j_3}^{(l)} \qquad \textrm{for all}\quad k\in\{1,2,3\}.
\label{eq:DifferentialEquationsLaurentSeriesPsi}
\end{equation}
Note that for each integer $l\geq-2$ equation (\ref{eq:DifferentialEquationsLaurentSeriesPsi}) provides a system of three differential equations on $\phi_{j_1,j_2,j_3}^{(l+2)}$.

\begin{proposition}
There exists a unique solution of the system of differential equations (\ref{eq:DifferentialEquationsLaurentSeriesPsi}) with initial condition 
\begin{equation}
\phi_{j_1,j_2,j_3}^{(0)}=1,\qquad \phi_{j_1,j_2,j_3}^{(1)}=0\qquad\textrm{for all}\qquad (j_1,j_2,j_3)\in\mathbf J.
\label{eq:phiSeriesRecursionInitialCondition}
\end{equation}
Moreover, the solution has the following series expansion
\begin{subequations}
\begin{equation}
\phi_{j_1,j_2,j_3}=\sum_{m_{12},m_{13},m_{23}\in\mathbb Z_{\geq0}}c_{m_{12},m_{13},m_{23}}(j_1,j_2,j_3)X_{12}^{m_{12}}X_{13}^{m_{13}}X_{23}^{m_{23}},
\label{eq:phiSeriesDecompositionX}
\end{equation}
where coefficients are polynomials in variables $j_1,j_2,j_3$ with degrees bounded above as
\begin{equation}
c_{m_{12},m_{13},m_{23}}\in\mathbb C[j_1,j_2,j_3],\qquad \deg c_{m_{12},m_{13},m_{23}}\leq m_{12}+m_{13}+m_{23}.
\label{eq:DegreeBoundscm12m13m23}
\end{equation}
\end{subequations}
\end{proposition}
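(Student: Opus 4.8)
The plan is to solve the system (\ref{eq:DifferentialEquationsLaurentSeriesPsi}) recursively in the total $X$-degree, using Proposition \ref{prop:KernelHm2} to control the ambiguity at each step. First I would separate the leading operator: since $\hat H_k^{(m)}$ raises the $X$-degree by $m$, the only term on the left-hand side of (\ref{eq:DifferentialEquationsLaurentSeriesPsi}) that involves the unknown top component $\phi^{(l+2)}_{j_1,j_2,j_3}$ is $\hat H_k^{(-2)}\phi^{(l+2)}_{j_1,j_2,j_3}$, so the equation rearranges to
\begin{equation*}
\hat H_k^{(-2)}\phi^{(l+2)}_{j_1,j_2,j_3}=(j_k+1)^2\phi^{(l)}_{j_1,j_2,j_3}-\sum_{m=-1}^{+\infty}\hat H_k^{(m)}\phi^{(l-m)}_{j_1,j_2,j_3}=:T_k,\qquad k=1,2,3,
\end{equation*}
whose right-hand side depends only on the components $\phi^{(0)},\dots,\phi^{(l+1)}$ already constructed. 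Thus the whole problem reduces to inverting the three leading operators simultaneously on each graded piece.

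For uniqueness I would consider the $\mathbb C$-linear map $L_d\colon p\mapsto(\hat H_1^{(-2)}p,\hat H_2^{(-2)}p,\hat H_3^{(-2)}p)$ on the space $V_d$ of homogeneous polynomials of degree $d=l+2$ in $X_{12},X_{13},X_{23}$. Proposition \ref{prop:KernelHm2} says exactly that $\ker L_d=V_d\cap\mathbb C$, which is $0$ for $d\geq1$; hence $L_d$ is injective for every $d\geq1$. Consequently, once $T_1,T_2,T_3$ are fixed there is at most one admissible $\phi^{(l+2)}_{j_1,j_2,j_3}$, and together with the prescribed initial data $\phi^{(0)}=1$, $\phi^{(1)}=0$ this yields uniqueness of the entire solution by induction on $l$.

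Existence is the main obstacle, because a single operator $\hat H_k^{(-2)}$ has a large kernel and the three target equations must be \emph{compatible} for a common solution to exist. Here I would not verify compatibility by hand but instead import it from the already-constructed polynomials: by Propositions \ref{prop:GenusTwoPieriForSchurSolution} and \ref{prop:GenusTwoSchurHiEigenfunctions}, for every admissible triple the Schur polynomial $\Phi_{j_1,j_2,j_3}$ is an honest eigenfunction of the $\hat H_k$, so its graded pieces solve (\ref{eq:DifferentialEquationsLaurentSeriesPsi}); in particular the target vector $(T_1,T_2,T_3)$ lies in $\operatorname{im}L_d$ whenever $(j_1,j_2,j_3)$ is admissible. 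Choosing a fixed $\mathbb C$-linear left inverse $\pi_d$ of $L_d$ (which exists by injectivity and does not depend on $j$), I would then set $\phi^{(l+2)}_{j_1,j_2,j_3}:=\pi_d(T_1,T_2,T_3)$; this is a genuine solution precisely on the locus where $(T_1,T_2,T_3)\in\operatorname{im}L_d$.

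To promote this to a \emph{polynomial} statement in $j_1,j_2,j_3$ with the degree bound (\ref{eq:DegreeBoundscm12m13m23}), I would run the construction with $j_1,j_2,j_3$ treated as formal indeterminates and argue by induction on $d$. The operators $\hat H_k^{(m)}$ carry no $j$-dependence, so $T_k$ is polynomial in $j$ of degree $\le\max\{2+l,\,l+1\}=l+2$ once the lower components satisfy the inductive bound, and applying the $j$-independent linear map $\pi_d$ preserves both polynomiality and this degree. It then remains to check that $(T_1,T_2,T_3)\in\operatorname{im}L_d$ \emph{identically} in $j$: writing $\operatorname{im}L_d$ as the common zero locus of finitely many linear functionals $\lambda$, each $\lambda(T_1,T_2,T_3)$ is a polynomial in $j$ that vanishes at every admissible triple by the previous paragraph. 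Since the admissible triples are the image of $\mathbb Z_{\ge0}^3$ under the invertible change of variables (\ref{eq:ChangeBetweenJD}), they are Zariski dense in $\mathbb C^3$, so these polynomials vanish identically; hence $\phi^{(l+2)}_{j_1,j_2,j_3}=\pi_d(T_1,T_2,T_3)$ solves the system for all $j$ and has coefficients polynomial in $j$ of degree $\le l+2=m_{12}+m_{13}+m_{23}$, as required.
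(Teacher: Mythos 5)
Your proposal is correct and follows essentially the same route as the paper's proof: the same rearrangement isolating $\hat H_k^{(-2)}$ acting on the top homogeneous component, uniqueness from Proposition \ref{prop:KernelHm2} via injectivity on each graded piece, existence imported from the genus two Schur polynomials being eigenfunctions, and polynomiality with the degree bound (\ref{eq:DegreeBoundscm12m13m23}) from solving a $j$-independent constant-coefficient linear system. Your only addition is to make explicit, via Zariski density of the admissible triples under the change of variables (\ref{eq:ChangeBetweenJD}), that the compatibility conditions hold identically in $j_1,j_2,j_3$ --- a point the paper's proof leaves implicit --- which is a sound and welcome refinement rather than a different method.
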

\begin{proof}
To prove the statement of the proposition we will proceed by induction in degree. Denote by $\mathcal S(M),\; M\in\mathbb Z_{\geq0}$ the following statement: ``\textit{There exists a unique family of polynomials 
$c_{m_{12},m_{13},m_{23}}\in\mathbb C[j_1,j_2,j_3],\; m_{12}+m_{13}+m_{23}\leq M$ satisfying (\ref{eq:DegreeBoundscm12m13m23}) which solve differential equation (\ref{eq:DifferentialEquationsLaurentSeriesPsi}) for all $0\leq l\leq M-1$ with initial conditions (\ref{eq:phiSeriesRecursionInitialCondition}). Moreover, $c_{m_{12},m_{13},m_{23}}(j_1,j_2,j_3)$ coincide with the coefficients of the series expansion of the Genus two Schur polynomials.}''

The base of induction $\mathcal S(0)$ is given by
\begin{equation*}
c_{0,0,0}=1,\qquad c_{1,0,0}=c_{0,1,0}=c_{0,0,1}=0.
\end{equation*}

To prove the step of induction, note that differential equation (\ref{eq:DifferentialEquationsLaurentSeriesPsi}) can be equivalently written as
\begin{equation}
\hat H_k^{(-2)}\phi^{(M+1)}_{j_1,j_2,j_3}=(j_k+1)^2\phi_{j_1,j_2,j_3}^{(M-2)}-\sum_{m=-1}^{+\infty}\hat H_k^{(m)}\phi_{j_1,j_2,j_3}^{(M-1-m)}\qquad\textrm{for all}\quad k\in\{1,2,3\}.
\label{eq:phiM1StepOfInduction}
\end{equation}
The latter can be viewed as a system of three differential equations on $\phi_{j_1,j_2,j_3}^{(M+1)}$. On the one hand, this system of differential equations admits at least one solution given by the series coefficient of the genus two Schur polynomials. On the other hand this solution must be unique by Proposition \ref{prop:KernelHm2}.

To conclude the proof of the step of induction we have to show that series coefficients $c_{m_{12},m_{13},m_{23}}(j_1,j_2,j_3),\;m_{12}+m_{13}+m_{23}=M+1$ are all polynomials in $j_1,j_2,j_3$ of degree $M+1$. Indeed, when (\ref{eq:phiM1StepOfInduction}) is expanded in monomial basis, it is equivalent to a system of nonhomogeneous linear equations on $c_{m_{12},m_{13},m_{23}}, m_{12}+m_{13}+m_{23}=M+1$ with constant coefficients. The right hand side is a vector of polynomials of degree $M+1$ in $j_1,j_2,j_3$. We already know that this system of linear equations has a unique solution, so the constant coefficient matrix must be invertible. As a corollary, all $c_{m_{12},m_{13},m_{23}}, m_{12}+m_{13}+m_{23}=M+1$ are polynomials of degree $M+1$.
\end{proof}

\begin{example}
To illustrate the polynomial dependence of the series coefficients $c_{m_{12},m_{13},m_{23}}$ on $j_1,j_2,j_3$ we list few nontrivial ones
\begin{equation*}
\begin{aligned}
c_{2,0,0}=&-c_{3,0,0}=\frac{j_1^2+j_2^2-j_3^2}{12}+\frac{j_1+j_2-j_3}{6},\\[0.5em]
c_{4,0,0}=&\frac{3j_1^4+2j_1^2j_2^2-6j_1^2j_3^2+3j_2^4-6j_2^2j_3^2+3j_3^4}{960}+\frac{j_1^3+2j_1^2j_2-3j_1^2j_3+j_1j_2^2-3j_1j_3^2+3j_2^2}{240}\\[0.5em]
&+\frac{-j_2^2j_3-j_2j_3^2+j_3^3}{80}+\frac{10j_1^2+j_1j_2-3j_1j_3+10j_2^2-3j_2j_3-7j_3^2}{120}+\frac{17(j_1+j_2-j_3)}{120},\\[0.5em]
c_{2,2,0}=&\frac{j_1^4+2j_1^2j_2^2+2j_1^2j_3^2-3j_2^4+6j_2^2j_3^2-3j_3^4}{480}+\frac{j_1^3+j_1^2j_2+j_1^2j_3+j_1j_2^2+j_1j_3^2-3j_2^3+3j_2^2j_3+j_2j_3^2-3j_3^3}{120}\\[0.5em]
&+\frac{j_1^2+2j_1j_2+2j_1j_3-3j_2^2+6j_2j_3-3j_3^2}{120}.
\end{aligned}
\end{equation*}
\end{example}

\section{Single Schur sum}

In this section, we derive the analogue of the Cauchy formula for genus two Schur polynomials. To achieve this, we introduce their weighted sum with two parameters, $\kappa$ and $\lambda$ to which we refer as \textit{Genus two Cauchy sum}. We demonstrate that this Cauchy sum satisfies a system of partial differential equations in the variables $\kappa$, $\lambda$, $x_{12}$, $x_{13}$, and $x_{23}$. Consequently, we derive a closed-form expression for the asymptotic behavior as $\lambda\rightarrow\kappa$ of the Genus two Cauchy sum in terms of elementary functions.

We begin with the following technical observation: 
\begin{lemma}
Fix a nonnegative integer $m \in \mathbb{Z}_{\geq 0}$. There exist only finitely many admissible triples $(j_1, j_2, j_3) \in \mathbf{J}$ satisfying $j_2 + j_3 = m$. 
\label{lemm:FinitelyManyj1}
\end{lemma}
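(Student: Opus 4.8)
The plan is to fix $m \in \mathbb{Z}_{\geq 0}$ and count directly the admissible triples $(j_1, j_2, j_3) \in \mathbf{J}$ with $j_2 + j_3 = m$. First I would observe that the condition $j_2 + j_3 = m$ together with $j_2, j_3 \in \mathbb{Z}_{\geq 0}$ already confines $(j_2, j_3)$ to the finite set $\{(j_2, m - j_2) : 0 \leq j_2 \leq m\}$, so only $j_1$ remains to be controlled. The whole argument then reduces to showing that, for each such fixed pair $(j_2, j_3)$, the admissibility condition from Definition \ref{def:AdmissibleTriple} permits only finitely many values of $j_1$.

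The key step is to use the triangle inequality in \eqref{eq:AdmissibilityDefinition}. For the triple $(j_1, j_2, j_3)$ to be admissible we need $|j_1 - j_2| \leq j_3 \leq j_1 + j_2$; in particular the left inequality $|j_1 - j_2| \leq j_3$ forces $j_1 \leq j_2 + j_3 = m$. Thus every admissible triple with $j_2 + j_3 = m$ satisfies $0 \leq j_1 \leq m$, so $j_1$ ranges over a finite set as well. Combining the finitely many choices for $(j_2, j_3)$ with the finitely many choices for $j_1$, there are at most $(m+1)^2$ admissible triples with $j_2 + j_3 = m$, which is certainly finite.

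I would present this cleanly by writing: if $(j_1, j_2, j_3) \in \mathbf{J}$ and $j_2 + j_3 = m$, then from $|j_1 - j_2| \leq j_3$ we deduce $j_1 - j_2 \leq j_3$, hence $j_1 \leq j_2 + j_3 = m$. Since $j_1, j_2, j_3$ are nonnegative integers constrained by $j_1 \leq m$ and $j_2 + j_3 = m$, they all lie in $\{0, 1, \dots, m\}$, a finite set, and the conclusion follows immediately. The parity condition $j_1 + j_2 + j_3 \equiv 0 \bmod 2$ only cuts the count down further and is not needed for finiteness.

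There is essentially no obstacle here: the statement is an elementary consequence of the triangle inequality, and the only point requiring minor care is extracting the bound $j_1 \leq m$ from the correct half of the inequality $|j_1 - j_2| \leq j_3$ (rather than from $j_3 \leq j_1 + j_2$, which bounds $j_1$ from below and does not by itself yield finiteness). This is the kind of lemma whose proof is a single short paragraph.
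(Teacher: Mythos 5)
Your proof is correct and follows essentially the same route as the paper: both arguments bound $j_2,j_3\leq m$ from $j_2+j_3=m$ and then extract $j_1\leq j_2+j_3=m$ from the triangle inequality $|j_1-j_2|\leq j_3$ in (\ref{eq:AdmissibilityDefinition}). Your version merely spells out the counting (at most $(m+1)^2$ triples) and the remark about which half of the inequality is needed, both of which the paper leaves implicit.
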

\begin{proof}
Observe that $j_2, j_3 \leq m$. Using the triangle inequality (\ref{eq:AdmissibilityDefinition}), we get $j_1 \leq j_2 + j_3 = m$.
\end{proof}

\begin{definition}
Genus two Cauchy sum is defined by following formal power series in $\lambda$:
\begin{equation}
\mathcal{C}_-(x_{12}, x_{13}, x_{23} | \kappa, \lambda) := \sum_{(j_1, j_2, j_3) \in \mathbf{J}} \phi_{j_1, j_2, j_3} (\kappa^{j_1 + 1} - \kappa^{-j_1 - 1}) \lambda^{j_2 + j_3} \quad \in \quad \mathbb{C}[\kappa^{\pm 1}][x_{12}^{\pm 1}, x_{13}^{\pm 1}, x_{23}^{\pm 1}] \llbracket \lambda \rrbracket. 
\label{eq:CMinusDefinition}
\end{equation}
\end{definition}

Note that, by Lemma \ref{lemm:FinitelyManyj1}, each coefficient in $\lambda^{j_2 + j_3}$ is given by a finite sum. Consequently, the coefficients of the formal power series in $\lambda$ must be Laurent polynomials in the remaining variables $\kappa$, $x_{12}$, $x_{13}$, and $x_{23}$.

It will be convenient for us to introduce the following notation for the logarithmic derivative of the Genus two Cauchy sum
\begin{align}
\mathcal C_+(x_{12},x_{13},x_{23}|\kappa,\lambda):=&\kappa\frac{\partial}{\partial \kappa}\mathcal C_-(x_{12},x_{13},x_{23}|\kappa,\lambda)
=\sum_{(j_1,j_2,j_3)\in\mathbf J}\phi_{j_1,j_2,j_3}(\kappa^{j_1+1}+\kappa^{-j_1-1})(j_1+1)\lambda^{j_2+j_3}.
\label{eq:CPlusDefinition}
\end{align}

\begin{lemma}
Genus two Cauchy sum and its logarithmic derivative satisfy the following equation
\begin{equation}
\kappa \frac{\partial}{\partial \kappa}\mathcal C_+=\hat H_1\mathcal C_-.
\label{eq:H1CMinus}
\end{equation}
\end{lemma}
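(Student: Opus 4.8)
\emph{The plan is to} prove the identity by applying each of the two operators term by term to the defining power series and observing that they produce the same result. The key preliminary remark is that, by Lemma~\ref{lemm:FinitelyManyj1}, the coefficient of each monomial $\lambda^{m}$ in $\mathcal C_-$ is a \emph{finite} $\mathbb C[\kappa^{\pm1}]$-linear combination of the Schur polynomials $\phi_{j_1,j_2,j_3}$ with $j_2+j_3=m$, hence an honest element of $\mathcal H[\kappa^{\pm1}]$. Consequently both operators occurring in the statement, the Euler operator $\kappa\,\partial_\kappa$ and the second order differential operator $\hat H_1$, are defined coefficientwise in $\lambda$, so all the termwise manipulations below are legitimate and no question of convergence in $\lambda$ arises.

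\emph{First I would} compute the left-hand side. Using $\kappa\,\partial_\kappa\bigl(\kappa^{j_1+1}+\kappa^{-j_1-1}\bigr)=(j_1+1)\bigl(\kappa^{j_1+1}-\kappa^{-j_1-1}\bigr)$ and applying $\kappa\,\partial_\kappa$ to each summand of the definition (\ref{eq:CPlusDefinition}) of $\mathcal C_+$, one obtains
\[
\kappa\,\partial_\kappa\,\mathcal C_+=\sum_{(j_1,j_2,j_3)\in\mathbf J}(j_1+1)^2\,\phi_{j_1,j_2,j_3}\bigl(\kappa^{j_1+1}-\kappa^{-j_1-1}\bigr)\lambda^{j_2+j_3}.
\]
\emph{Next I would} evaluate the right-hand side. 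Since $\hat H_1$ involves only the variables $x_{12},x_{13},x_{23}$, it commutes with the scalar prefactors $(\kappa^{j_1+1}-\kappa^{-j_1-1})\lambda^{j_2+j_3}$ and may be pushed inside the sum in the definition (\ref{eq:CMinusDefinition}) of $\mathcal C_-$. Invoking the eigenfunction property of Proposition~\ref{prop:GenusTwoSchurHiEigenfunctions}, namely $\hat H_1\phi_{j_1,j_2,j_3}=(j_1+1)^2\phi_{j_1,j_2,j_3}$, yields precisely the same series, and comparison of the two expressions closes the argument.

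The underlying computation is essentially immediate once the bookkeeping is in place, so \emph{the hard part is} not an algebraic difficulty but the justification that $\hat H_1$, which \emph{a priori} is an operator on rational functions, can be applied to the formal power series $\mathcal C_-$ one coefficient of $\lambda$ at a time. This is exactly what the finiteness in Lemma~\ref{lemm:FinitelyManyj1} secures: each $\lambda$-coefficient is a finite sum lying in $\mathcal H$, on which $\hat H_1$ acts as multiplication by $(j_1+1)^2$ on the relevant eigenvectors, so the operator preserves $\mathcal H[\kappa^{\pm1}]$ coefficientwise and the interchange of $\hat H_1$ with the (infinite) summation over $\mathbf J$ is valid.
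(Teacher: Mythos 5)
Your proof is correct and follows essentially the same route as the paper: apply $\hat H_1$ termwise to $\mathcal C_-$, invoke the eigenfunction property $\hat H_1\phi_{j_1,j_2,j_3}=(j_1+1)^2\phi_{j_1,j_2,j_3}$, and match the result with $\kappa\,\partial_\kappa\,\mathcal C_+$. Your added justification via Lemma~\ref{lemm:FinitelyManyj1} that each $\lambda$-coefficient is a finite sum, so the termwise application is legitimate, is a careful touch the paper leaves implicit, but it does not change the argument.
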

\begin{proof}
Applying $\hat H_1$ termwise to (\ref{eq:CMinusDefinition}) and using (\ref{eq:GenusTwoSchurH1Eigenvectors}) we get
\begin{equation}
\hat H_1\mathcal C_-(x_{12},x_{13},x_{23}|\kappa,\lambda)=\sum_{(j_1,j_2,j_3)\in\mathbf J}(j_1+1)^2\phi_{j_1,j_2,j_3}(\kappa^{j_1+1}-\kappa^{-j_1-1})\lambda^{j_2+j_3}=\kappa\frac{\partial}{\partial \kappa}\mathcal C_+.
\end{equation}
\end{proof}
\begin{corollary}
Genus two Cauchy sum and its logarithmic derivative both satisfy second order differential equation
\begin{equation}
\left(\kappa\frac{\partial}{\partial \kappa}\right)^2\mathcal C_{\pm}=\hat H_1\mathcal C_{\pm}
\label{eq:SecondOrderLogarithmicDerivativeCPlusMinus}
\end{equation}
\end{corollary}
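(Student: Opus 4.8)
The plan is to derive both identities directly from the Lemma just proved, equation (\ref{eq:H1CMinus}), together with the defining relation $\mathcal C_+=\kappa\,\partial_\kappa\,\mathcal C_-$ from (\ref{eq:CPlusDefinition}), exploiting the fact that $\kappa\,\partial_\kappa$ and the differential operator $\hat H_1$ act on disjoint sets of variables and hence commute. No genuine computation is needed; the corollary is a two-line consequence of what is already in hand.

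For the subscript $-$ case I would factor $(\kappa\,\partial_\kappa)^2$ as the composition $\kappa\,\partial_\kappa\circ\kappa\,\partial_\kappa$. Applying the inner factor to $\mathcal C_-$ gives $\kappa\,\partial_\kappa\,\mathcal C_-=\mathcal C_+$ by definition, and applying the outer factor then yields $\kappa\,\partial_\kappa\,\mathcal C_+$, which equals $\hat H_1\,\mathcal C_-$ by (\ref{eq:H1CMinus}). This settles $(\kappa\,\partial_\kappa)^2\mathcal C_-=\hat H_1\,\mathcal C_-$. For the subscript $+$ case I would expand $(\kappa\,\partial_\kappa)^2\mathcal C_+=\kappa\,\partial_\kappa(\kappa\,\partial_\kappa\,\mathcal C_+)$ and substitute (\ref{eq:H1CMinus}) to rewrite the inner expression as $\hat H_1\,\mathcal C_-$. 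The key step is then the observation that $\hat H_1$, being a differential operator purely in the variables $x_{12},x_{13},x_{23}$ as displayed in (\ref{eq:Hamiltonians}), commutes with $\kappa\,\partial_\kappa$; pulling $\hat H_1$ past $\kappa\,\partial_\kappa$ leaves $\hat H_1(\kappa\,\partial_\kappa\,\mathcal C_-)=\hat H_1\,\mathcal C_+$, once more by the definition of $\mathcal C_+$.

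The only point meriting a line of justification is that these formal manipulations are legitimate. Since each coefficient of the formal power series in $\lambda$ is a Laurent polynomial in $\kappa,x_{12},x_{13},x_{23}$, as noted immediately after (\ref{eq:CMinusDefinition}), both $\kappa\,\partial_\kappa$ and $\hat H_1$ act termwise in $\lambda$ and their commutator vanishes coefficient by coefficient, so interchanging them is valid on the whole series. I do not anticipate any real obstacle here: the substantive content is carried entirely by the previously established Lemma, and the commutativity $[\,\kappa\,\partial_\kappa,\hat H_1\,]=0$ is immediate from the disjointness of the variables on which the two operators act.
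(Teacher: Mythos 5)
Your proposal is correct and matches the paper's own proof essentially verbatim: both cases follow from equation (\ref{eq:H1CMinus}) combined with the definition $\mathcal C_+=\kappa\,\partial_\kappa\,\mathcal C_-$, using the commutativity $[\kappa\,\partial_\kappa,\hat H_1]=0$ for the $\mathcal C_+$ case exactly as the paper does (you merely run the chain of equalities in the opposite direction). Your extra remark justifying the termwise manipulations on the formal power series is a harmless, slightly more careful addition.
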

\begin{proof}
Substituting definition (\ref{eq:CPlusDefinition}) of $\mathcal C_+$ into (\ref{eq:H1CMinus}) we conclude that (\ref{eq:SecondOrderLogarithmicDerivativeCPlusMinus}) holds for $\mathcal C_-$. On the other hand, using the fact that logarithmic derivative in $\kappa$ commutes with differential operator $\hat H_1$ we get
\begin{equation*}
\hat H_1\mathcal C_+\stackrel{(\ref{eq:CPlusDefinition})}{=}\hat H_1\left(\kappa\frac{\partial}{\partial\kappa}\right)\mathcal C_-=\left(\kappa\frac{\partial}{\partial\kappa}\right)\hat H_1\mathcal C_-\stackrel{(\ref{eq:H1CMinus})}{=}\left(\kappa\frac{\partial}{\partial\kappa}\right)^2\mathcal C_+.
\end{equation*}
\end{proof}

It will be convenient for us to consider a new set of variables
\begin{subequations}
\begin{equation}
(x_{12},x_{13},x_{23}|\kappa,\lambda)\leftrightarrow(\widetilde X_{12},\widetilde X_{13},\widetilde X_{23}|\widetilde\kappa,\widetilde \lambda),
\end{equation}
\begin{equation}
\kappa=\widetilde\kappa,\qquad \lambda=\widetilde \lambda,\qquad x_{ij}=1+\left(1-\widetilde\lambda/\widetilde \kappa\right)\widetilde X_{ij},\qquad 1\leq i<j\leq3.
\end{equation}
\label{eq:ChangeOfVariablesXTilde}
\end{subequations}

\begin{proposition}
In terms of variables (\ref{eq:ChangeOfVariablesXTilde}) the genus two Cauchy sum has the following series expansion as a formal power series in $\widetilde X_{12}, \widetilde X_{13},\widetilde X_{23}$
\begin{equation*}
\mathcal C_{\pm}\big(\widetilde X_{12},\widetilde X_{13},\widetilde X_{23}|\widetilde \kappa,\widetilde \lambda\big)=\sum_{m_{12},m_{13},m_{23}\in\mathbb Z_{\geq0}}C^{\pm}_{m_{12},m_{13},m_{23}}\big(\widetilde\kappa,\widetilde\lambda\big)\widetilde X_{12}^{m_{12}}\widetilde X_{13}^{m_{13}}\widetilde X_{23}^{m_{23}},
\end{equation*}
where the series coefficients are rational functions in variables $\widetilde\kappa,\widetilde\lambda$:
\begin{equation*}
C_{m_{12},m_{13},m_{23}}^{-}\big(\widetilde\kappa,\widetilde\lambda\big)\quad\in\quad \mathbb C\big(\widetilde\kappa,\widetilde\lambda\big)\qquad\textrm{for all}\qquad m_{12},m_{13},m_{23}\in\mathbb Z_{\geq0}.
\end{equation*}
Moreover, as a function of $\widetilde\lambda$ for fixed value of $\kappa\neq0,1$, 
\begin{enumerate}
\item $C_{m_{12},m_{13},m_{23}}^{-}\big(\widetilde \kappa,\widetilde \lambda\big)$ has a pole at $\widetilde\lambda=\widetilde\kappa$ of order at most $2$,
\item $C_{m_{12},m_{13},m_{23}}^{+}\big(\widetilde\kappa,\widetilde\lambda\big)$ has a pole at $\widetilde\lambda=\widetilde\kappa$ of order at most $3$.
\end{enumerate}
Note that we do not specify the order of other poles.
\label{prop:PoleAtKappaEqualsLambda}
\end{proposition}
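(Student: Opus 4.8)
The plan is to reduce the proposition to a single estimate on a two–variable generating function and then prove that estimate by a local pole–order count on the diagonal $u=v=\widetilde\lambda$.

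\emph{Reduction.} By the series expansion (\ref{eq:phiSeriesDecompositionX}) together with the degree bound (\ref{eq:DegreeBoundscm12m13m23}), write $\phi_{j_1,j_2,j_3}=\sum_{\mathbf m}c_{\mathbf m}(j_1,j_2,j_3)X_{12}^{m_{12}}X_{13}^{m_{13}}X_{23}^{m_{23}}$ with $X_{ij}=x_{ij}-1$ and $\deg c_{\mathbf m}\le|\mathbf m|:=m_{12}+m_{13}+m_{23}$. Under (\ref{eq:ChangeOfVariablesXTilde}) one has $X_{ij}=\mu\,\widetilde X_{ij}$ with $\mu:=1-\widetilde\lambda/\widetilde\kappa$, so extracting the coefficient of $\widetilde X_{12}^{m_{12}}\widetilde X_{13}^{m_{13}}\widetilde X_{23}^{m_{23}}$ in (\ref{eq:CMinusDefinition}) and (\ref{eq:CPlusDefinition}) gives $C^-_{\mathbf m}=\mu^{|\mathbf m|}\,\Sigma[c_{\mathbf m}]$ and $C^+_{\mathbf m}=\mu^{|\mathbf m|}\,\Sigma[(j_1+1)c_{\mathbf m}]$, where
\[
\Sigma[P]:=\sum_{(j_1,j_2,j_3)\in\mathbf J}P(j_1,j_2,j_3)\,\big(\widetilde\kappa^{\,j_1+1}\pm\widetilde\kappa^{-j_1-1}\big)\,\widetilde\lambda^{\,j_2+j_3}
\]
with sign $-$ for $C^-$ and $+$ for $C^+$. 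By Lemma \ref{lemm:FinitelyManyj1} each coefficient of $\widetilde\lambda^{n}$ is a finite sum, so $\Sigma[P]$ is a well-defined formal series in $\widetilde\lambda$. Since $\mu$ has a simple zero at $\widetilde\lambda=\widetilde\kappa$, the proposition reduces to the claim: \emph{for every polynomial $P$ the series $\Sigma[P]$ represents a rational function of $\widetilde\kappa,\widetilde\lambda$ whose pole at $\widetilde\lambda=\widetilde\kappa$ has order at most $\deg P+2$.} Indeed $\deg c_{\mathbf m}\le|\mathbf m|$ then gives order $\le2$ after multiplying by $\mu^{|\mathbf m|}$, and $\deg\big((j_1+1)c_{\mathbf m}\big)\le|\mathbf m|+1$ gives order $\le3$.

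\emph{Generating function and operators.} Introduce $\Theta[P](\widetilde\kappa,u,v):=\sum_{(j_1,j_2,j_3)\in\mathbf J}P\,(\widetilde\kappa^{\,j_1+1}\pm\widetilde\kappa^{-j_1-1})\,u^{j_2}v^{j_3}$, so $\Sigma[P]=\Theta[P]\big|_{u=v=\widetilde\lambda}$. First I would evaluate $\Theta[1]$ in closed form: for fixed $(j_2,j_3)$ the admissible $j_1$ run over $|j_2-j_3|\le j_1\le j_2+j_3$ in steps of $2$, so the inner sum over $j_1$ is geometric, and summing the result over $j_2,j_3$ produces an explicit rational function. The features I need are that $\Theta[1]$ has only \emph{simple} poles, supported on $\{u=\widetilde\kappa\}\cup\{v=\widetilde\kappa\}\cup\{u=\widetilde\kappa^{-1}\}\cup\{v=\widetilde\kappa^{-1}\}\cup\{uv=1\}$, with the poles along $u,v=\widetilde\kappa$ coming from the $\widetilde\kappa^{-j_1-1}$ part and those along $u,v=\widetilde\kappa^{-1}$ from the $\widetilde\kappa^{\,j_1+1}$ part (the overall sign being immaterial for this). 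Multiplication by the monomial weights is realized by differential operators: $j_2\leftrightarrow u\partial_u$, $j_3\leftrightarrow v\partial_v$, and $(j_1+1)\leftrightarrow\widetilde\kappa\partial_{\widetilde\kappa}$ (which merely exchanges the two sign-types). Hence any weight $P$ of degree $d$ is realized on $\Theta[1]$ by a linear combination of operators $(u\partial_u)^{b}(v\partial_v)^{c}(\widetilde\kappa\partial_{\widetilde\kappa})^{a}$ with $a+b+c\le d$; in particular $\Theta[P]$, its diagonal restriction $\Sigma[P]$, and therefore every $C^\pm_{\mathbf m}$, are rational.

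\emph{Local pole count.} Fix $\widetilde\kappa$ with $\widetilde\kappa^2\ne1$. The loci $\{u=\widetilde\kappa^{-1}\},\{v=\widetilde\kappa^{-1}\},\{uv=1\}$ meet the diagonal only at $\widetilde\lambda=\widetilde\kappa^{-1},\pm1\ne\widetilde\kappa$, so near $\widetilde\lambda=\widetilde\kappa$ only the simple poles along $u=\widetilde\kappa$ and $v=\widetilde\kappa$ contribute. In the local coordinates $\epsilon_1=\widetilde\kappa-u$, $\epsilon_2=\widetilde\kappa-v$ the function $\Theta[1]$ is a combination of $\epsilon_1^{-p}\epsilon_2^{-q}$ with $p,q\in\{0,1\}$ and coefficients regular at $\epsilon_1=\epsilon_2=0$, so its order on the diagonal $\epsilon_1=\epsilon_2$ is at most $2$. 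In these coordinates the three operators read $u\partial_u=-(\widetilde\kappa-\epsilon_1)\partial_{\epsilon_1}$, $v\partial_v=-(\widetilde\kappa-\epsilon_2)\partial_{\epsilon_2}$, and $\widetilde\kappa\partial_{\widetilde\kappa}\big|_{u,v}=\widetilde\kappa\partial_{\widetilde\kappa}\big|_{\epsilon}+\widetilde\kappa(\partial_{\epsilon_1}+\partial_{\epsilon_2})$; each is first order with coefficients regular at the origin, so each raises the exponent sum $p+q$ of every Laurent monomial by at most one, i.e. raises the diagonal order by at most one. Applying $a+b+c\le\deg P$ of them produces diagonal order at most $\deg P+2$, and restricting to $u=v=\widetilde\lambda$ gives the claim. (The boundary value $\widetilde\kappa^2=1$, excluded here, corresponds to coalescence of the singular loci and must be treated separately if needed.)

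I expect the crux to be this last bookkeeping step. The naive estimate that treats the poles along $u=\widetilde\kappa$ and $v=\widetilde\kappa$ independently overcounts, because $\widetilde\kappa\partial_{\widetilde\kappa}$ differentiates both factors at once; the key point is that although it yields two terms, each monomial has its total degree $p+q$ raised by exactly one, so the diagonal order grows by one per operator rather than two. Making this precise by tracking the bidegree of each $\epsilon_1^{-p}\epsilon_2^{-q}$, together with the explicit evaluation of $\Theta[1]$ that pins the starting order at $2$ and confirms the pole locations, is where the real work lies.
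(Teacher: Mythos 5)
Your proposal is correct and follows essentially the same route as the paper: the paper likewise expands $\phi_{j_1,j_2,j_3}$ via (\ref{eq:phiSeriesDecompositionX})--(\ref{eq:DegreeBoundscm12m13m23}), evaluates the basic weighted sum over admissible triples as the rational function $\big((1-\Lambda_1\Lambda_2)(1-\Lambda_1\Lambda_3)(1-\Lambda_2\Lambda_3)\big)^{-1}$ in (\ref{eq:Lambda1Lambda2Lambda3Sum}), realizes the polynomial weights $c_{m_{12},m_{13},m_{23}}(j_1,j_2,j_3)$ by a differential operator of order at most $m_{12}+m_{13}+m_{23}$ as in (\ref{eq:DLambda1Lambda2Lambda3Sum}), and then cancels against the prefactor $(1-\widetilde\lambda/\widetilde\kappa)^{m_{12}+m_{13}+m_{23}}$. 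Your deviations are cosmetic rather than structural: you specialize $\Lambda_1=\widetilde\kappa^{\pm1}$ at the outset and realize $j_1$-weights by $\widetilde\kappa\,\partial_{\widetilde\kappa}$ instead of keeping $\Lambda_1$ free and differentiating in it, and your explicit local bookkeeping (each first-order operator raises the total diagonal pole degree $p+q$ by at most one, so the order at $\widetilde\lambda=\widetilde\kappa$ is at most $\deg P+2$) makes precise the step the paper compresses into its concluding ``as a corollary'' sentence.
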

\begin{proof}
Substituting (\ref{eq:phiSeriesDecompositionX}) into (\ref{eq:CMinusDefinition}) and changing variables as in (\ref{eq:ChangeOfVariablesXTilde}) we obtain the following presentation of the coefficient $C_{m_{12},m_{13},m_{23}}^-\big(\widetilde\kappa,\widetilde\lambda\big)$ as series in $\widetilde\lambda$
\begin{equation}
\begin{aligned}
C_{m_{12},m_{13},m_{23}}^-\big(\widetilde\kappa,\widetilde\lambda\big)=&\left(1-\widetilde\lambda/\widetilde\kappa\right)^{m_{12}+m_{13}+m_{23}}\sum_{(j_1,j_2,j_3)\in\mathbf J}c_{m_{12},m_{13},m_{23}}(j_1,j_2,j_3)\left(\widetilde\kappa^{j_1}+\widetilde\kappa^{-j_1}\right)\widetilde\lambda^{j_2+j_3}\\
=&\left(1-\widetilde\lambda/\widetilde\kappa\right)^{m_{12}+m_{13}+m_{23}}\sum_{j_2,j_3\in\mathbb Z_{\geq0}}\left(\sum_{j_1=|j_2-j_3|}^{j_2+j_3}c_{m_{12},m_{13},m_{23}}(j_1,j_2,j_3)\left(\widetilde \kappa^{j_1}+\widetilde\kappa^{-j_1}\right)\right)\widetilde\lambda^{j_2+j_3}.
\end{aligned}
\label{eq:CMinusXtildeExpansion}
\end{equation}
To prove the statement of the proposition it will be enough for us to show that the right hand side of (\ref{eq:CMinusXtildeExpansion}) is a power series expansion in $\widetilde\lambda$ of a rational function in $\widetilde\kappa,\widetilde\lambda$ with the order of the pole at $\widetilde\lambda=\widetilde\kappa$ at most $2$.

We start by proving an auxiliary identity for power series decomposition of a rational function in three variables $\Lambda_1,\Lambda_2,\Lambda_3$ as a power series in the last two, namely we have
\begin{equation}
\begin{aligned}
\sum\limits_{(j_1,j_2,j_3)\in\mathbf J} \Lambda_1^{j_1}\Lambda_2^{j_2}\Lambda_3^{j_3} =& \sum\limits_{j_2,j_3} \dfrac{\Lambda_1^{j_2+j_3+2}-\Lambda_1^{|j_2-j_3|}}{\Lambda_1^2-1} \Lambda_2^{j_2} \Lambda_3^{j_3} \\ 
=& \sum\limits_{j_2 \geq j_3} \dfrac{\Lambda_1^{j_2+j_3+2}-\Lambda_1^{j_2-j_3}}{\Lambda_1^2-1} \Lambda_2^{j_2} \Lambda_3^{j_3} + \sum\limits_{j_2 < j_3} \dfrac{\Lambda_1^{j_2+j_3+2}-\Lambda_1^{j_3-j_2}}{\Lambda_1^2-1} \Lambda_2^{j_2} \Lambda_3^{j_3} \\
=& \dfrac{1}{(1 - \Lambda_1\Lambda_3)(1 - \Lambda_2 \Lambda_3)(1 - \Lambda_1^2 \Lambda_2 \Lambda_3)} + \dfrac{1}{(1 - \Lambda_1\Lambda_2)(1 - \Lambda_2 \Lambda_3)(1 - \Lambda_1^2 \Lambda_2 \Lambda_3)} \\
=& \dfrac{1}{(1-\Lambda_1\Lambda_2)(1-\Lambda_1\Lambda_3)(1-\Lambda_2\Lambda_3)}
\end{aligned}
\label{eq:Lambda1Lambda2Lambda3Sum}
\end{equation}

Note that the right hand side of (\ref{eq:Lambda1Lambda2Lambda3Sum}) specialize well to a rational function in two variables when $\Lambda_1 = \widetilde\kappa^{\pm 1}, \Lambda_2 = \Lambda_3 = \widetilde\lambda$. In particular, it implies that that $C_{0,0,0}^{-}\big(\widetilde \kappa,\widetilde \lambda\big)$ is a rational function in $\widetilde\kappa,\widetilde\lambda$ that has a pole of order at most 2 at $\widetilde\lambda=\widetilde\kappa$.

To prove the statement of the proposition for general values of $m_{12},m_{13},m_{23}\in\mathbb Z_{\geq0}$ note that from (\ref{eq:DegreeBoundscm12m13m23}) we know that $c_{m_{12},m_{13},m_{23}}(j_1,j_2,j_3)$ is a polynomial of degree at most $m_{12}+m_{13}+m_{23}$ in variables $j_1,j_2,j_3$. It implies the following identity for a general sum
\begin{subequations}
\begin{equation}
    \sum_{(j_1,j_2,j_3)\in\mathbf J} c_{m_{12},m_{13},m_{23}}(j_1,j_2,j_3)\Lambda_1^{j_1}\Lambda_2^{j_2}\Lambda_3^{j_3}=\mathcal D_{m_{12},m_{13},m_{23}}\left(\frac1{(1-\Lambda_1\Lambda_2)(1-\Lambda_1\Lambda_3)(1-\Lambda_2\Lambda_3)}\right),
\label{eq:DLambda1Lambda2Lambda3Sum}
\end{equation}
where $\mathcal D_{m_{12},m_{13},m_{23}}$ is a differential operator of order at most $m_{12}+m_{13}+m_{23}:$
\begin{equation}
\mathcal D_{m_{12},m_{13},m_{23}}\in\mathbb C\left[\frac\partial{\partial\Lambda_1},\frac\partial{\partial\Lambda_2},\frac\partial{\partial\Lambda_3}\right],\qquad \deg\mathcal D_{m_{12},m_{13},m_{23}}\leq m_{12}+m_{13}+m_{23}.
\end{equation}
\end{subequations}
As a corollary, all higher $C_{m_{12},m_{13},m_{23}}^{-}\big(\widetilde \kappa,\widetilde \lambda\big)$ are obtained by first specializing the rational function on the right hand side of (\ref{eq:DLambda1Lambda2Lambda3Sum}) to $\Lambda_1=\kappa^{\pm1},\Lambda_2=\Lambda_3=\lambda$ followed by multiplication by $(1-\widetilde\lambda/\widetilde\kappa)^{m_{12}+m_{13}+m_{23}}$. This proves the statement of the Proposition for $C_{m_{12},m_{13},m_{23}}^-\big(\widetilde\kappa,\widetilde\lambda\big)$. Consideration for $C_{m_{12},m_{13},m_{23}}^{+}\big(\widetilde \kappa,\widetilde \lambda\big)$ is completely analogous.
\end{proof}

\begin{definition}
We refer to the following pair of formal power series in $\widetilde X_{12},\widetilde X_{13},\widetilde X_{23}$ as a leading terms at $\widetilde\lambda=\widetilde\kappa$ of the Genus two Cauchy sum and its logarithmic derivative.
\begin{subequations}
\begin{align}
\Omega_-\big(\widetilde X_{12},\widetilde X_{13},\widetilde X_{23}\big|\widetilde\kappa\big):=&\mathop{\mathrm{Res}}_{\widetilde\lambda=\widetilde\kappa}\,(1-\widetilde\lambda/\widetilde\kappa)\,\mathcal C_-\big(\widetilde X_{12},\widetilde X_{13},\widetilde X_{23}\big|\widetilde\kappa,\widetilde\lambda\big)\qquad\in\quad\mathbb C\big(\widetilde\kappa\big)\big\llbracket\widetilde X_{12},\widetilde X_{13},\widetilde X_{23}\big\rrbracket,\label{eq:OmegaMinusDefinition}\\[0.3em]
\Omega_+\big(\widetilde X_{12},\widetilde X_{13}\widetilde X_{23}\big|\widetilde\kappa\big):=&\mathop{\mathrm{Res}}_{\widetilde\lambda=\widetilde\kappa}(1-\widetilde\lambda/\widetilde\kappa)^2\mathcal C_+\big(\widetilde X_{12},\widetilde X_{13},\widetilde X_{23}|\widetilde\kappa,\widetilde\lambda\big)\qquad\in\quad\mathbb C\big(\widetilde\kappa\big)\big\llbracket\widetilde X_{12},\widetilde X_{13},\widetilde X_{23}\big\rrbracket.\label{eq:OmegaPlusDefinition}
\end{align}
\end{subequations}
\end{definition}

\begin{lemma}
We have the following differential equation relating the leading terms at $\widetilde\kappa=\widetilde\lambda$ of the Genus two Cauchy sum and its logarithmic derivative in $\kappa$
\begin{equation}
\Omega_+\big(\widetilde X_{12},\widetilde X_{13},\widetilde X_{23}\big|\widetilde\kappa\big)=\left(-2-d_{\widetilde X}\right)\Omega_-\big(\widetilde X_{12},\widetilde X_{13},\widetilde X_{23}\big|\widetilde\kappa\big),
\label{eq:LogarithmicDerivativeLeadingTermsRelation}
\end{equation}
where $d_{\widetilde X}$ is a logarithmic derivative in $\widetilde X$-variables,
\begin{equation*}
d_{\widetilde X}:=\widetilde X_{12}\frac\partial{\partial\widetilde X_{12}}+\widetilde X_{13}\frac\partial{\partial\widetilde X_{13}}+\widetilde X_{23}\frac\partial{\partial\widetilde X_{23}}.
\end{equation*}
\end{lemma}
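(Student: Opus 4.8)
The plan is to transport the defining relation $\mathcal C_+=\kappa\,\partial_\kappa\mathcal C_-$ from (\ref{eq:CPlusDefinition}) into the tilde coordinates (\ref{eq:ChangeOfVariablesXTilde}) and then read off the relation between the leading Laurent coefficients at $\widetilde\lambda=\widetilde\kappa$. The only real subtlety is that the operator $\kappa\,\partial_\kappa$ in (\ref{eq:CPlusDefinition}) differentiates at fixed $x_{ij}$ and $\lambda$, whereas $\widetilde\kappa$ enters (\ref{eq:ChangeOfVariablesXTilde}) both directly and through the factor $1-\widetilde\lambda/\widetilde\kappa$ rescaling the $\widetilde X_{ij}$; so $\kappa\,\partial_\kappa$ is \emph{not} simply $\widetilde\kappa\,\partial_{\widetilde\kappa}$, and sorting out the chain rule is the heart of the argument.

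First I would set $s:=1-\widetilde\lambda/\widetilde\kappa$, so that $\widetilde\lambda=\widetilde\kappa(1-s)$ and $x_{ij}=1+s\,\widetilde X_{ij}$, and use $(\widetilde X,\widetilde\kappa,s)$ as working coordinates. Differentiating the three relations $\kappa=\widetilde\kappa$, $\lambda=\widetilde\kappa(1-s)$, $x_{ij}=1+s\,\widetilde X_{ij}$ with respect to $\kappa$ at fixed $x_{ij},\lambda$ yields $\partial_\kappa s=(1-s)/\widetilde\kappa$ and $\partial_\kappa\widetilde X_{ij}=-\frac{1-s}{s\widetilde\kappa}\widetilde X_{ij}$, whence
\[
\kappa\frac{\partial}{\partial\kappa}\Big|_{x,\lambda}=\widetilde\kappa\frac{\partial}{\partial\widetilde\kappa}+(1-s)\frac{\partial}{\partial s}-\frac{1-s}{s}\,d_{\widetilde X}.
\]
Applying this operator to $\mathcal C_-$ produces $\mathcal C_+$ in the new coordinates.

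Next I would expand $\mathcal C_-=\sum_{n\geq-2}c_n(\widetilde\kappa)\,s^n$, a Laurent series in $s$ whose pole is bounded by Proposition \ref{prop:PoleAtKappaEqualsLambda}, so that $c_n=0$ for $n<-2$; here each $c_n$ is a power series in $\widetilde X$ on which $d_{\widetilde X}$ acts coefficientwise while $s$ is inert. Feeding this into the displayed operator and collecting the coefficient $e_{-3}$ of $s^{-3}$ in $\mathcal C_+$, only the $(1-s)\partial_s$ piece and the $-\tfrac1s d_{\widetilde X}$ part of the last term survive (the $\widetilde\kappa\partial_{\widetilde\kappa}$ piece and the $+d_{\widetilde X}$ part only reach degree $-2$, since they would otherwise involve the vanishing $c_{-3}$), giving
\[
e_{-3}=-2\,c_{-2}-d_{\widetilde X}c_{-2}=-(2+d_{\widetilde X})c_{-2}.
\]

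Finally I would convert coefficients into the residues (\ref{eq:OmegaMinusDefinition})--(\ref{eq:OmegaPlusDefinition}). Since $\widetilde\lambda=\widetilde\kappa(1-s)$ gives $d\widetilde\lambda=-\widetilde\kappa\,ds$ at fixed $\widetilde\kappa$, one has $\mathop{\mathrm{Res}}_{\widetilde\lambda=\widetilde\kappa}f=-\widetilde\kappa\,\mathop{\mathrm{Res}}_{s=0}f$, so that $\Omega_-=-\widetilde\kappa\,c_{-2}$ and $\Omega_+=-\widetilde\kappa\,e_{-3}=\widetilde\kappa\,(2+d_{\widetilde X})c_{-2}$. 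Because $d_{\widetilde X}$ commutes with multiplication by $\widetilde\kappa$, this rearranges to $\Omega_+=(2+d_{\widetilde X})(\widetilde\kappa\,c_{-2})=-(2+d_{\widetilde X})\Omega_-=(-2-d_{\widetilde X})\Omega_-$, which is exactly (\ref{eq:LogarithmicDerivativeLeadingTermsRelation}). The hard part will be the first step: getting the chain-rule decomposition right and tracking precisely which pieces of the operator shift the $s$-degree, since the pole bound $c_{-3}=0$ furnished by Proposition \ref{prop:PoleAtKappaEqualsLambda} is exactly what makes the unwanted $\widetilde\kappa\partial_{\widetilde\kappa}$ and $+d_{\widetilde X}$ contributions to $e_{-3}$ drop out and lets the residues isolate $c_{-2}$ cleanly.
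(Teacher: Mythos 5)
Your proposal is correct and takes essentially the paper's own route: the paper likewise transports $\mathcal C_+=\kappa\,\partial_\kappa\,\mathcal C_-$ into the tilde variables via the chain-rule identity (\ref{eq:ParitalKappaThroughTilde}), invokes the pole bounds of Proposition \ref{prop:PoleAtKappaEqualsLambda} to write $\mathcal C_\pm$ as leading pole term plus lower-order remainder as in (\ref{eq:CPlusMinusLeadingTermSubstitution}), and extracts the leading Laurent coefficient at $\widetilde\lambda=\widetilde\kappa$ — your substitution $s=1-\widetilde\lambda/\widetilde\kappa$ merely repackages the paper's computation where $\widetilde\kappa\,\partial_{\widetilde\kappa}$ hits the factor $\big(1-\widetilde\lambda/\widetilde\kappa\big)^{-2}$ (producing the $-2$) as your $(1-s)\partial_s$ term. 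The one notational discrepancy is that you read $\mathop{\mathrm{Res}}$ literally as a residue in $\widetilde\lambda$, giving $\Omega_-=-\widetilde\kappa\,c_{-2}$ and $\Omega_+=-\widetilde\kappa\,e_{-3}$, whereas the paper effectively uses the coefficients of $s^{-2}$ and $s^{-3}$ (this is forced by consistency with (\ref{eq:CPlusMinusLeadingTermSubstitution})); the common factor $-\widetilde\kappa$ commutes with $d_{\widetilde X}$ and cancels from the linear relation, as you correctly observe.
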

\begin{proof}
Change of variables (\ref{eq:ChangeOfVariablesXTilde}) corresponds to the following change of the first partial derivatives
\begin{align*}
\frac\partial{\partial\widetilde\kappa}=&\frac\partial{\partial\kappa}+\frac{\lambda}{\kappa^2}\big(1-\lambda/\kappa\big)^{-1}\left((x_{12}-1)\frac\partial{\partial x_{12}}+(x_{13}-1)\frac\partial{\partial x_{13}}+(x_{23}-1)\frac\partial{\partial x_{23}}\right),\\[0.3em]\qquad
\frac\partial{\partial\widetilde\lambda}=&\frac\partial{\partial\lambda}-\frac1\kappa\big(1-\lambda/\kappa\big)^{-1}\left((x_{12}-1)\frac\partial{\partial x_{12}}+(x_{13}-1)\frac\partial{\partial x_{13}}+(x_{23}-1)\frac\partial{\partial x_{23}}\right),\\[0.3em]
\frac\partial{\partial\widetilde X_{ij}}=&\big(1-\widetilde\lambda/\widetilde\kappa\big)\frac\partial{\partial x_{ij}},\qquad 1\leq i<j\leq3.
\end{align*}
Solving for partial derivative in $\kappa$ we get
\begin{equation}
\frac\partial{\partial\kappa}=\frac\partial{\partial\widetilde\kappa}-\frac{\widetilde\lambda}{\widetilde\kappa^2}\left(1-\widetilde\lambda/\widetilde\kappa\right)^{-1}d_{\widetilde X}.
\label{eq:ParitalKappaThroughTilde}
\end{equation}

On the other hand, by Proposition \ref{prop:PoleAtKappaEqualsLambda} we know that
\begin{subequations}
\begin{align}
\mathcal C_-\big(\widetilde X_{12},\widetilde X_{13},\widetilde X_{23}\big|\widetilde\kappa,\widetilde\lambda\big)=&\frac{\Omega_-\big(\widetilde X_{12},\widetilde X_{13},\widetilde X_{23}\big|\widetilde\kappa,\widetilde\lambda)}{\big(1-\widetilde\lambda/\widetilde\kappa\big)^2}+\mathbf O\left(\big(1-\widetilde\lambda/\widetilde\kappa\big)^{-1}\right),\label{eq:CMinusPoleDecomposition}\\[0.3em]
\mathcal C_+\big(\widetilde X_{12},\widetilde X_{13},\widetilde X_{23}\big|\widetilde\kappa,\widetilde\lambda\big)=&\frac{\Omega_+\big(\widetilde X_{12},\widetilde X_{13},\widetilde X_{23}\big|\widetilde\kappa,\widetilde\lambda)}{\big(1-\widetilde\lambda/\widetilde\kappa\big)^3
}+\mathbf O\left(\left(1-\widetilde\lambda/\widetilde\kappa\right)^{-2}\right),\label{eq:CPlusPoleDecomposition}
\end{align}
\label{eq:CPlusMinusLeadingTermSubstitution}
\end{subequations}
where by
\begin{equation*}
\mathbf O\left(\big(1-\widetilde\lambda/\widetilde\kappa\big)^{-k}\right),\qquad k\in\mathbb Z
\end{equation*}
we denote the formal power series in $\widetilde X_{12},\widetilde X_{13},\widetilde X_{23}$ with coefficients being rational functions in $\widetilde\kappa,\widetilde\lambda$ which when viewed as functions in $\widetilde\lambda$ have a pole at $\widetilde\lambda=\widetilde\kappa$ of order at most $k$. Note that we allow coefficients of the formal power series to have other poles.

Substituting (\ref{eq:ParitalKappaThroughTilde}) and (\ref{eq:CPlusMinusLeadingTermSubstitution}) into (\ref{eq:CPlusDefinition}) we get
\begin{align*}
\frac{\Omega_+\big(\widetilde X_{12},\widetilde X_{13},\widetilde X_{23}\big|\widetilde\kappa\big)}{\big(1-\widetilde\lambda/\widetilde\kappa\big)^3}=&\left(\widetilde\kappa\frac\partial{\partial\widetilde\kappa}-\frac{\widetilde\lambda}{\widetilde\kappa}\big(1-\widetilde\lambda/\widetilde\kappa\big)^{-1}d_{\widetilde X}\right)\frac{\Omega_-(\widetilde X_{12},\widetilde X_{13},\widetilde X_{23}|\widetilde\kappa)}{\big(1-\widetilde\lambda/\widetilde\kappa\big)^2}+\mathbf O\left(\big(1-\widetilde\lambda/\widetilde\kappa\big)^{-2}\right)\\[0.3em]
=&-2\frac{\widetilde\lambda}{\widetilde\kappa}\frac{\Omega_-\big(\widetilde X_{12},\widetilde X_{13},\widetilde X_{23}\big|\widetilde\kappa\big)}{\big(1-\widetilde\lambda/\widetilde\kappa\big)^3}-\frac{\widetilde\lambda}{\widetilde\kappa}\big(1-\widetilde\lambda/\widetilde\kappa\big)^{-3}d_{\widetilde X}\Omega_-\big(\widetilde X_{12},\widetilde x_{13},\widetilde X_{23}\big|\widetilde\kappa\big)+\mathbf O\left(\big(1-\widetilde\lambda/\widetilde\kappa\big)^{-2}\right).
\end{align*}
Multiplying both sides by $\big(1-\widetilde\lambda/\widetilde\kappa\big)^2$ and taking the residue around $\widetilde\lambda=\widetilde\kappa$ on both sides we get exactly (\ref{eq:LogarithmicDerivativeLeadingTermsRelation}).
\end{proof}

\begin{lemma}
The leading term at $\widetilde\lambda=\widetilde\kappa$ of the Genus two Cauchy sum and its logarithmic derivative satisfy the following differential equation
\begin{equation}
\hat{\mathcal H}_1^{(-2)}\Omega_-\big(\widetilde X_{12},\widetilde X_{13},\widetilde X_{23}\big|\widetilde\kappa\big)=\left(-3-d_{\widetilde X}\right)\Omega_+\big(\widetilde X_{12},\widetilde X_{13},\widetilde X_{23}\big|\widetilde\kappa\big),
\label{eq:H1m2TildeOmegaMinus}
\end{equation}
where $\hat{\mathcal H}_1^{(-2)}$ is a homogeneous differential operators of degree $-2$ defined as
\begin{equation}
\hat{\mathcal H}_1^{(-2)}:=\frac{\partial^2}{\partial\widetilde X_{12}^2}+\frac{\partial^2}{\partial\widetilde X_{13}^2}+\frac{\widetilde X_{12}^2+\widetilde X_{13}^2-\widetilde X_{23}^2}{\widetilde X_{12}\widetilde X_{13}}\frac{\partial^2}{\partial\widetilde X_{12}\partial\widetilde X_{13}}+\frac2{\widetilde X_{12}}\frac{\partial}{\widetilde\partial X_{12}}+\frac2{\widetilde X_{13}}\frac{\partial}{\widetilde\partial X_{13}}.
\label{eq:H1m2CalDefinition}
\end{equation}
\end{lemma}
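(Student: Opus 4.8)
The plan is to run the same scheme as in the proof of (\ref{eq:LogarithmicDerivativeLeadingTermsRelation}), but starting from the relation (\ref{eq:H1CMinus}) rather than from the defining relation (\ref{eq:CPlusDefinition}). The one genuinely new ingredient is to understand how the full operator $\hat H_1$ transforms under the change of variables (\ref{eq:ChangeOfVariablesXTilde}), so I would record that behavior first. Since (\ref{eq:ChangeOfVariablesXTilde}) is the rescaling $X_{ij}=(1-\widetilde\lambda/\widetilde\kappa)\widetilde X_{ij}$ of the variables $X_{ij}=x_{ij}-1$, and each homogeneous component $\hat H_1^{(m)}$ in (\ref{eq:HkHomogeneousComponentDecompsition}) shifts total $X$-degree by $m$, a monomial operator $X^\alpha\partial_X^\beta$ with $|\alpha|-|\beta|=m$ picks up a factor $(1-\widetilde\lambda/\widetilde\kappa)^{m}$ when rewritten as $\widetilde X^\alpha\partial_{\widetilde X}^\beta$. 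Hence
\begin{equation*}
\hat H_1=\sum_{m=-2}^{+\infty}\big(1-\widetilde\lambda/\widetilde\kappa\big)^{m}\hat{\mathcal H}_1^{(m)},
\end{equation*}
where $\hat{\mathcal H}_1^{(m)}$ is $\hat H_1^{(m)}$ rewritten verbatim in the $\widetilde X$-variables, homogeneous of degree $m$; the leading term is precisely $(1-\widetilde\lambda/\widetilde\kappa)^{-2}\hat{\mathcal H}_1^{(-2)}$ with $\hat{\mathcal H}_1^{(-2)}$ given by (\ref{eq:H1m2CalDefinition}).

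Next I would substitute this expansion together with (\ref{eq:ParitalKappaThroughTilde}) and the pole decompositions (\ref{eq:CPlusMinusLeadingTermSubstitution}) into (\ref{eq:H1CMinus}) and isolate the most singular part at $\widetilde\lambda=\widetilde\kappa$. On the right-hand side $\hat H_1\mathcal C_-$, since $\hat{\mathcal H}_1^{(-2)}$ involves only $\widetilde X$ and commutes with $(1-\widetilde\lambda/\widetilde\kappa)$, the only contribution at order $(1-\widetilde\lambda/\widetilde\kappa)^{-4}$ is $\hat{\mathcal H}_1^{(-2)}\Omega_-$; all terms with $m\geq-1$, and all contributions of the $\mathbf O\big((1-\widetilde\lambda/\widetilde\kappa)^{-1}\big)$ remainder in (\ref{eq:CMinusPoleDecomposition}), are of order $(1-\widetilde\lambda/\widetilde\kappa)^{-3}$ or milder. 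On the left-hand side $\kappa\partial_\kappa\mathcal C_+$, applying $\widetilde\kappa\partial_{\widetilde\kappa}$ to the $(1-\widetilde\lambda/\widetilde\kappa)^{-3}$ pole of $\mathcal C_+$ produces the leading term $-3(\widetilde\lambda/\widetilde\kappa)\Omega_+$, while the $-\tfrac{\widetilde\lambda}{\widetilde\kappa}(1-\widetilde\lambda/\widetilde\kappa)^{-1}d_{\widetilde X}$ part produces $-\tfrac{\widetilde\lambda}{\widetilde\kappa}d_{\widetilde X}\Omega_+$, both at order $(1-\widetilde\lambda/\widetilde\kappa)^{-4}$; the $\partial_{\widetilde\kappa}\Omega_+$ piece is again subleading.

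Finally, exactly as in the previous lemma, I would multiply both sides by $(1-\widetilde\lambda/\widetilde\kappa)^{3}$ and take the residue at $\widetilde\lambda=\widetilde\kappa$. The common factor $-\widetilde\kappa$ coming from $\mathop{\mathrm{Res}}_{\widetilde\lambda=\widetilde\kappa}(1-\widetilde\lambda/\widetilde\kappa)^{-1}$ cancels between the two sides, and setting $\widetilde\lambda/\widetilde\kappa=1$ at the pole turns the left-hand side into $-(3+d_{\widetilde X})\Omega_+$ and the right-hand side into $\hat{\mathcal H}_1^{(-2)}\Omega_-$, which is precisely (\ref{eq:H1m2TildeOmegaMinus}). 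The step I expect to require the most care — and the main obstacle to stating cleanly — is the bookkeeping in the first paragraph: verifying that $\hat H_1$ expands in integer powers of $(1-\widetilde\lambda/\widetilde\kappa)$ with the claimed leading coefficient $\hat{\mathcal H}_1^{(-2)}$, and then confirming rigorously that none of the subleading pieces (the $\partial_{\widetilde\kappa}\Omega_\pm$ contributions, the $m\geq-1$ components of $\hat H_1$, and the lower-order poles of $\mathcal C_\pm$) can reach order $(1-\widetilde\lambda/\widetilde\kappa)^{-4}$ and thus contaminate the leading identity.
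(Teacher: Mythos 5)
Your proposal is correct and follows essentially the same route as the paper's proof: the scaling relation $\hat H_1^{(m)}=(1-\widetilde\lambda/\widetilde\kappa)^m\hat{\mathcal H}_1^{(m)}$ is the paper's equation obtained by substituting $X_{ij}=(1-\widetilde\lambda/\widetilde\kappa)\widetilde X_{ij}$, and your subsequent substitution of the pole decompositions and of the expression for $\partial/\partial\kappa$ into $\kappa\,\partial_\kappa\mathcal C_+=\hat H_1\mathcal C_-$, followed by multiplying by $(1-\widetilde\lambda/\widetilde\kappa)^3$ and taking the residue at $\widetilde\lambda=\widetilde\kappa$, is exactly the argument given there. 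Your extra care about the homogeneity bookkeeping and the subleading terms is sound and matches what the paper handles implicitly via its $\mathbf O\left(\big(1-\widetilde\lambda/\widetilde\kappa\big)^{-3}\right)$ notation.
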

\begin{proof}
First, comparing (\ref{eq:ChangeOfVariablesxToX}) with (\ref{eq:ChangeOfVariablesXTilde}) we get
\begin{equation}
X_{i,j}=\big(1-\widetilde\lambda/\widetilde\kappa\big)\widetilde X_{i,j},\qquad 
\frac\partial{\partial X_{i,j}}=\big(1-\widetilde\lambda/\widetilde\kappa\big)^{-1}\frac{\partial}{\partial\widetilde X_{i,j}},\qquad 1\leq i<j\leq3.
\label{eq:ChangeXtoTildeX}
\end{equation}

Next, substituting (\ref{eq:ChangeXtoTildeX}) into (\ref{eq:H1m2}) we obtain the following expression homogeneous components of the differential operator $\hat H_1$ in terms of new variables
\begin{equation}
\hat H_1^{(m)}=\big(1-\widetilde\lambda/\widetilde\kappa\big)^m\hat{\mathcal H}_1^{(m)},\qquad m\geq-2,
\label{eq:H1mThroughCal}
\end{equation}
where $\hat{\mathcal H}_1^{(m)}$ stands for the homogeneous differential operator in $\widetilde X_{12},\widetilde X_{13},\widetilde X_{23}$ of degree $m$ with constant coefficients. In particular, for $\hat{\mathcal H}_1^{(m)}$ this reads (\ref{eq:H1m2CalDefinition}).

Substituting (\ref{eq:CPlusMinusLeadingTermSubstitution}) and (\ref{eq:HkHomogeneousComponentDecompsition}) along with (\ref{eq:H1mThroughCal}) on into (\ref{eq:H1CMinus}) we get
\begin{align*}
\big(1-\widetilde\lambda/\widetilde\kappa\big)^{-4}\hat{\mathcal H}_1^{(-2)}\Omega_-\big(\widetilde X_{12},\widetilde X_{13},\widetilde X_{23}\big|\widetilde\kappa\big)&+\mathbf O\left(\big(1-\widetilde\lambda/\widetilde\kappa\big)^{-3}\right)\\[0.3em]
=\;\;&\kappa\frac\partial{\partial\kappa}\left(\frac{\Omega_+(\widetilde X_{12},\widetilde X_{13},\widetilde X_{23}\big|\widetilde\kappa)}{\big(1-\widetilde\lambda/\widetilde\kappa\big)^3}+\mathbf O\left(\big(1-\widetilde\lambda/\widetilde\kappa\big)^{-2}\right)\right)\\[0.3em]
\stackrel{(\ref{eq:ParitalKappaThroughTilde})}{=}\,&\frac{\widetilde\lambda}{\widetilde\kappa}\big(1-\widetilde\lambda/\widetilde\kappa\big)^{-4}\left(-3-d_{\widetilde X}\right)\Omega_+\big(\widetilde X_{12},\widetilde X_{13},\widetilde X_{23}\big|\widetilde\kappa\big)+\mathbf O\left(\big(1-\widetilde\lambda/\widetilde\kappa\big)^{-3}\right).
\end{align*}
Multiplying by $\big(1-\widetilde\lambda/\widetilde\kappa\big)^3$ and taking residue at $\widetilde\lambda=\widetilde\kappa$ we arrive at (\ref{eq:H1m2TildeOmegaMinus}).
\end{proof}

\begin{proposition}
The leading terms at $\widetilde\lambda=\widetilde\kappa$ of the Genus two Cauchy sum and its logarithmic derivative satisfy the following partial differential equations
\begin{subequations}
\begin{align}
\left(\hat{\mathcal H}_1^{(-2)}-d_{\widetilde X}^2-5d_{\widetilde X}-6\right)\Omega_-\big(\widetilde X_{12},\widetilde X_{13},\widetilde X_{23}\big|\widetilde\kappa\big)=&0,
\label{eq:DifferentialEquationOmegaMinus}\\[0.5em]
\left(\hat{\mathcal H}_1^{(-2)}-d_{\widetilde X}^2-7d_{\widetilde X}-12\right)\Omega_+\big(\widetilde X_{12},\widetilde X_{13},\widetilde X_{23}\big|\widetilde\kappa\big)=&0.
\label{eq:DifferentialEquationOmegaPlus}
\end{align}
\label{eq:DifferentialEquations}
\end{subequations}
\label{prop:DifferentialEquationsMain}
\end{proposition}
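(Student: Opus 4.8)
The plan is to derive both partial differential equations purely algebraically from the two relations established in the preceding lemmas, namely (\ref{eq:LogarithmicDerivativeLeadingTermsRelation}), which expresses $\Omega_+$ through $\Omega_-$, and (\ref{eq:H1m2TildeOmegaMinus}), which expresses $\hat{\mathcal H}_1^{(-2)}\Omega_-$ through $\Omega_+$. No further analytic input is needed: once these two relations are in hand, both target equations follow by manipulating polynomials in the single Euler operator $d_{\widetilde X}$. The only additional ingredient is the commutation relation between $d_{\widetilde X}$ and the homogeneous operator $\hat{\mathcal H}_1^{(-2)}$.

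First I would record this commutation relation. Since $\hat{\mathcal H}_1^{(-2)}$ is homogeneous of degree $-2$ in the variables $\widetilde X_{12},\widetilde X_{13},\widetilde X_{23}$, it maps any homogeneous component of degree $k$ to one of degree $k-2$. Evaluating $d_{\widetilde X}\hat{\mathcal H}_1^{(-2)}$ and $\hat{\mathcal H}_1^{(-2)}d_{\widetilde X}$ on a monomial of degree $k$ produces $(k-2)$ and $k$ times $\hat{\mathcal H}_1^{(-2)}$ of that monomial respectively, which yields the operator identity
\begin{equation*}
\hat{\mathcal H}_1^{(-2)}\,d_{\widetilde X}=(d_{\widetilde X}+2)\,\hat{\mathcal H}_1^{(-2)},\qquad\text{equivalently}\qquad [d_{\widetilde X},\hat{\mathcal H}_1^{(-2)}]=-2\,\hat{\mathcal H}_1^{(-2)}.
\end{equation*}

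To obtain (\ref{eq:DifferentialEquationOmegaMinus}), I would substitute (\ref{eq:LogarithmicDerivativeLeadingTermsRelation}) directly into the right hand side of (\ref{eq:H1m2TildeOmegaMinus}). Because the two factors $(-3-d_{\widetilde X})$ and $(-2-d_{\widetilde X})$ are polynomials in $d_{\widetilde X}$ and hence commute, this gives
\begin{equation*}
\hat{\mathcal H}_1^{(-2)}\Omega_-=(-3-d_{\widetilde X})(-2-d_{\widetilde X})\Omega_-=\left(d_{\widetilde X}^2+5d_{\widetilde X}+6\right)\Omega_-,
\end{equation*}
which is exactly (\ref{eq:DifferentialEquationOmegaMinus}).

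For (\ref{eq:DifferentialEquationOmegaPlus}) I would instead apply $\hat{\mathcal H}_1^{(-2)}$ to (\ref{eq:LogarithmicDerivativeLeadingTermsRelation}); here the noncommutativity recorded above is essential. Using it to push $\hat{\mathcal H}_1^{(-2)}$ through the factor $(-2-d_{\widetilde X})$ gives $\hat{\mathcal H}_1^{(-2)}(-2-d_{\widetilde X})=(-4-d_{\widetilde X})\hat{\mathcal H}_1^{(-2)}$, and then substituting (\ref{eq:H1m2TildeOmegaMinus}) yields
\begin{equation*}
\hat{\mathcal H}_1^{(-2)}\Omega_+=(-4-d_{\widetilde X})\hat{\mathcal H}_1^{(-2)}\Omega_-=(-4-d_{\widetilde X})(-3-d_{\widetilde X})\Omega_+=\left(d_{\widetilde X}^2+7d_{\widetilde X}+12\right)\Omega_+,
\end{equation*}
which is precisely (\ref{eq:DifferentialEquationOmegaPlus}). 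The only step requiring any care is the single operator commutation computation; after that, both equations emerge as elementary polynomial identities in $d_{\widetilde X}$, so I anticipate no genuine obstacle.
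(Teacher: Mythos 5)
Your proof is correct and follows essentially the same route as the paper, whose proof is the one-line instruction to substitute (\ref{eq:LogarithmicDerivativeLeadingTermsRelation}) into (\ref{eq:H1m2TildeOmegaMinus}). You merely make explicit the one detail the paper leaves implicit for (\ref{eq:DifferentialEquationOmegaPlus}), namely the commutation relation $\hat{\mathcal H}_1^{(-2)}d_{\widetilde X}=(d_{\widetilde X}+2)\hat{\mathcal H}_1^{(-2)}$ coming from homogeneity of degree $-2$, and your use of it is accurate.
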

\begin{proof}
Substitute (\ref{eq:LogarithmicDerivativeLeadingTermsRelation}) into (\ref{eq:H1m2TildeOmegaMinus}).
\end{proof}

\begin{lemma}
The following formula holds:
\begin{equation}
\phi_{j_1,j_2,j_1-j_2}(x_{12},x_{13},1) = \sum\limits_{a = 0}^{j_2} \sum\limits_{b = 0}^{j_1-j_2} \ c_{a,b} \ (x_{12}+x_{12}^{-1})^{j_2-a} \ (x_{13}+x_{13}^{-1})^{j_1-j_2-b}
\label{AnsatzSpecialization}
\end{equation}
where
\begin{equation}
c_{a,b} = (-1)^{a/2+b/2} \dfrac{j_2!}{a!(j_2-a)!} \dfrac{(j_1-j_2)!}{b!(j_1-j_2-b)!} \dfrac{(j_1-a/2-b/2)!}{(j_1+1)!} \dfrac{(a+b)!}{(a/2+b/2)!}, \ \ \ \ \ a \geq 0, b \geq 0, a+b \mod 2 = 0
\label{AnsatzSpecializationCab}
\end{equation}
and $c_{a,b} = 0$ if either $a < 0$ or $b < 0$ or $a+b \mod 2 = 1$.
\begin{proof}
The fact that expansion of $\phi_{j_1,j_2,j_1-j_2}(x_{12},x_{13},1)$ has the form (\ref{AnsatzSpecialization}) follows from Lemma \ref{lemm:LeadingTermSchur}: indeed, for $j_3 = j_1 - j_2$ we have $d_1=0, d_2 = j_1 - j_2, d_3 = j_1$. It remains to prove that coefficients $c_{a,b}$ have the form (\ref{AnsatzSpecializationCab}). For this, we use the differential operator
\begin{equation*}
    \hat H_1\big|_{x_{23}=1} = x_{12}^2 \frac{\partial^2}{\partial x_{12}^2}+x_{13}^2\frac{\partial^2}{\partial x_{13}^2}+\frac{2(x_{12}^2+1)(x_{13}^2+1) - 2x_{12}x_{13}}{(x_{12}-x_{12}^{-1})(x_{13}-x_{13}^{-1})} \frac{\partial^2}{\partial x_{12}\partial x_{13}}+\frac{3x_{12}^2+1}{x_{12}-x_{12}^{-1}}\frac{\partial}{\partial x_{12}}+\frac{3x_{13}^2+1}{x_{13}-x_{13}^{-1}}\frac{\partial}{\partial x_{13}}+1
\end{equation*}
Proposition \ref{prop:GenusTwoSchurHiEigenfunctions} implies that $\phi_{j_1,j_2,j_1-j_2}(x_{12},x_{13},1)$ is an eigenfunction of $\hat H_1\big|_{x_{23}=1}$ with eigenvalue $(j_1+1)^2$. In addition, Lemma \ref{lemm:LeadingTermSchur} implies that it is the unique (up to normalization) eigenfunction with this eigenvalue and leading term $x_{12}^{j_2} x_{13}^{j_1-j_2}$. To prove the formula for coefficients $c_{a,b}$ it remains to show that eq.(\ref{AnsatzSpecialization}) with $c_{a,b}$ given by eq. (\ref{AnsatzSpecializationCab}) is an eigenfunction with correct eigenvalue, and has correct normalization.

To show this, we start with
\begin{align*}
 \hat H_1\big|_{x_{23}=1} (x_{12}+x_{12}^{-1})^{u} \ (x_{13}+x_{13}^{-1})^{v} =& (u+v+1)^2 (x_{12}+x_{12}^{-1})^{u} \ (x_{13}+x_{13}^{-1})^{v} \\
& -4 u (u-1) (x_{12}+x_{12}^{-1})^{u-2} \ (x_{13}+x_{13}^{-1})^{v} \\
& -8 uv (x_{12}+x_{12}^{-1})^{u-1} \ (x_{13}+x_{13}^{-1})^{v-1} \\ 
& - 4 v (v-1) (x_{12}+x_{12}^{-1})^{u} \ (x_{13}+x_{13}^{-1})^{v-2}
\end{align*}
which holds by direct computation. Therefore,

\begin{align*}
\hat H_1\big|_{x_{23}=1} \phi_{j_1,j_2,j_1-j_2}(x_{12},x_{13},1) = \sum\limits_{a = 0}^{j_2} \sum\limits_{b = 0}^{j_1-j_2} \ {\widetilde c}_{a,b} \ (x_{12}+x_{12}^{-1})^{j_2-a} \ (x_{13}+x_{13}^{-1})^{j_1-j_2-b}
\end{align*}
with

\begin{align*}
{\widetilde c}_{a,b} =& (j_1-a-b+1)^2 c_{a,b} - 4(j_2-a+2)(j_2-a+1) c_{a-2,b} \\
& - 8 (j_2-a+1)(j_1-j_2-b+1) c_{a-1,b-1} - 4(j_1-j_2-b+2)(j_1-j_2-b+1) c_{a,b-2}
\end{align*}
We need to show that

\begin{equation*}
{\widetilde c}_{a,b} = (j_1+1)^2 c_{a,b}
\end{equation*}
and this amounts to the following recursive relation for coefficients

\begin{align*}
& (a+b)(2 j_1 -a-b+2) c_{a,b} + 4(j_2-a+2)(j_2-a+1) c_{a-2,b} \\
& + 8 (j_2-a+1)(j_1-j_2-b+1) c_{a-1,b-1} + 4(j_1-j_2-b+2)(j_1-j_2-b+1) c_{a,b-2} = 0
\end{align*}
which holds by direct computation. To verify normalization, we compute

\begin{equation*}
\phi_{j_1,j_2,j_1-j_2}(1,1,1) = \sum\limits_{a = 0}^{j_2} \sum\limits_{b = 0}^{j_1-j_2} \ c_{a,b} \ 2^{j_1-a-b} 
\end{equation*}
Changing summation variables via $a = 2 s - b$, we find

\begin{equation*}
\phi_{j_1,j_2,j_1-j_2}(1,1,1) = \sum\limits_{s = 0}^{\lfloor j_1/2 \rfloor} \sum\limits_{b = 0}^{\infty} \ c_{2s-b,b} \ 2^{j_1-2s} 
\end{equation*}
where the upper limit of summation for the $b$ variable can be put to infinity because coefficients $c_{a,b}$ vanish if either $a > j_2$ or $b > j_1 - j_2$ or $b > 2s$. The sum over $b$ can be computed using the Gauss summation theorem, and we find

\begin{equation*}
\phi_{j_1,j_2,j_1-j_2}(1,1,1) = \sum\limits_{s = 0}^{j_1/2} \dfrac{2^{j_1 - 2 s} (-1)^s \Gamma(j_1 + 1 - s)}{(j_1 + 1) \Gamma(j_1 - 2 s + 1) \Gamma(s + 1)}
\end{equation*}
The sum over $s$ is also computed using the Gauss summation theorem, and we find

\begin{equation*}
\sum\limits_{s = 0}^{j_1/2} \dfrac{2^{j_1 - 2 s} (-1)^s \Gamma(j_1 + 1 - s)}{(j_1 + 1) \Gamma(j_1 - 2 s + 1) \Gamma(s + 1)} = 1
\end{equation*}
Therefore, the normalization is correct.
\end{proof}
\label{lemm:SpecializationCab}
\end{lemma}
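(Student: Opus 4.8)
The plan is to pin down the specialization $\phi_{j_1,j_2,j_1-j_2}(x_{12},x_{13},1)$ as the unique normalized eigenfunction of the restricted operator $\hat H_1|_{x_{23}=1}$ with a prescribed leading monomial, and then to check that the explicit coefficients (\ref{AnsatzSpecializationCab}) do produce such an eigenfunction. First I would observe that the shape (\ref{AnsatzSpecialization}) is forced: for $j_3=j_1-j_2$ the exponents of Lemma \ref{lemm:LeadingTermSchur} specialize to $d_1=0$, $d_2=j_1-j_2$, $d_3=j_2$, so the top-degree part is $x_{12}^{j_2}x_{13}^{j_1-j_2}$ and involves no $x_{23}$. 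Combined with the $\mathbb Z_2\times\mathbb Z_2\times\mathbb Z_2$-invariance built into (\ref{eq:HDefinition}), setting $x_{23}=1$ yields a polynomial in $u:=x_{12}+x_{12}^{-1}$ and $v:=x_{13}+x_{13}^{-1}$ whose leading term is $u^{j_2}v^{j_1-j_2}$, which is exactly the ansatz (\ref{AnsatzSpecialization}).

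The engine of the proof is Proposition \ref{prop:GenusTwoSchurHiEigenfunctions}, which makes $\phi_{j_1,j_2,j_1-j_2}(x_{12},x_{13},1)$ an eigenfunction of $\hat H_1|_{x_{23}=1}$ with eigenvalue $(j_1+1)^2$. The one genuinely computational input is the action on a single monomial,
\[
\hat H_1|_{x_{23}=1}\,(u^p v^q)=(p+q+1)^2u^pv^q-4p(p-1)u^{p-2}v^q-8pq\,u^{p-1}v^{q-1}-4q(q-1)u^pv^{q-2},
\]
obtained by direct differentiation. This displays $\hat H_1|_{x_{23}=1}$ as triangular in the monomial basis (degree non-increasing) with diagonal entry $(p+q+1)^2$, so substituting $p=j_2-a$, $q=j_1-j_2-b$ turns the eigenequation into the homogeneous recursion
\[
(a+b)(2j_1-a-b+2)c_{a,b}+4(j_2-a+2)(j_2-a+1)c_{a-2,b}+8(j_2-a+1)(j_1-j_2-b+1)c_{a-1,b-1}+4(j_1-j_2-b+2)(j_1-j_2-b+1)c_{a,b-2}=0.
\]
Because the prefactor $(a+b)(2j_1-a-b+2)$ is nonzero throughout the range $0<a+b\le j_1$, this recursion determines every $c_{a,b}$ from $c_{0,0}$; together with the normalization of Lemma \ref{lemm:GenusTwoSchurNormalizedToSpecialValue} the polynomial is therefore unique, and it suffices to check that (\ref{AnsatzSpecializationCab}) satisfies both conditions.

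I would verify the recursion by direct substitution of (\ref{AnsatzSpecializationCab}): after dividing through by the common binomial and sign factors, each of the four terms collapses to a ratio of factorials and the relation reduces to an elementary factorial identity, equivalent to a contiguous relation among the underlying hypergeometric terms. For the normalization I would set $u=v=2$, giving $\phi_{j_1,j_2,j_1-j_2}(1,1,1)=\sum_{a,b}c_{a,b}\,2^{j_1-a-b}$; the substitution $a=2s-b$ turns the inner sum over $b$ into a terminating ${}_2F_1$ at unit argument that the Gauss summation theorem evaluates in closed form, and the remaining sum over $s$ is again a terminating Gauss sum equal to $1$, as required.

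I expect the main obstacle to be the bookkeeping in these two verification steps rather than any conceptual difficulty. In particular, the factorial arguments $(a/2+b/2)!$ and $(j_1-a/2-b/2)!$ together with the sign $(-1)^{a/2+b/2}$ only make sense under the parity constraint $a+b\equiv0\bmod 2$, so care is needed to track which terms of the recursion and of the two nested Gauss sums are actually present, and to confirm that the vanishing prescription $c_{a,b}=0$ for $a+b$ odd is consistent with the recursion.
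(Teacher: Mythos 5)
Your proposal is correct and follows essentially the same route as the paper's proof: fix the ansatz shape via Lemma \ref{lemm:LeadingTermSchur}, use Proposition \ref{prop:GenusTwoSchurHiEigenfunctions} plus uniqueness of the eigenfunction with leading monomial $x_{12}^{j_2}x_{13}^{j_1-j_2}$, verify the four-term recursion from the action of $\hat H_1\big|_{x_{23}=1}$ on $u^p v^q$, and confirm normalization by two terminating Gauss sums. Your only deviations are improvements in rigor: you correctly compute $d_3=j_2$ (the paper's text misprints $d_3=j_1$, though its leading monomial is stated correctly), and you justify uniqueness explicitly via the triangularity of the operator and the nonvanishing of $(a+b)(2j_1-a-b+2)$ on the relevant range, where the paper simply asserts it.
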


\begin{proposition}
Functions $\phi_{j_1,j_2,j_3}$ satisfy
\begin{equation*}
\sum\limits_{j_2 + j_3 = J} \phi_{j_1,j_2,j_3}(x_{12},x_{13},1) = \dfrac{(1-x_{13}^{j_1+1}/x_{12}^{j_1+1})(1-x_{13}^{j_1+1} x_{12}^{j_1+1})}{(j_1+1)x_{13}^{j_1}(1-x_{13}/x_{12})(1-x_{13}x_{12})}
\end{equation*}
\label{prop:SpecializedSum}
\end{proposition}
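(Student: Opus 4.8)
The plan is to specialize everything to $x_{23}=1$ and exploit the fact that $\hat H_1$ does not differentiate $x_{23}$. Writing $Y_{ij}=x_{ij}+x_{ij}^{-1}$, the ring $\mathcal H$ specializes to $\mathbb C[Y_{12},Y_{13}]$, and $\hat H_1\big|_{x_{23}=1}$ is a well-defined operator on it; specializing Proposition \ref{prop:GenusTwoSchurHiEigenfunctions} gives $\hat H_1\big|_{x_{23}=1}\,\phi_{j_1,j_2,j_3}(x_{12},x_{13},1)=(j_1+1)^2\phi_{j_1,j_2,j_3}(x_{12},x_{13},1)$, so the left-hand side $\Sigma_J:=\sum_{j_2+j_3=J}\phi_{j_1,j_2,j_3}(x_{12},x_{13},1)$ is an eigenfunction with eigenvalue $(j_1+1)^2$. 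A short computation shows that on a monomial $x_{12}^{m}x_{13}^{n}$ the top-degree part of $\hat H_1\big|_{x_{23}=1}$ acts by the scalar $(m+n+1)^2$; hence on the homogeneous component of $Y$-degree $N$ its symbol is the scalar $(N+1)^2$. Two consequences follow: first, any eigenfunction with eigenvalue $(j_1+1)^2$ has $Y$-degree exactly $j_1$; second, since $\hat H_1\big|_{x_{23}=1}-(j_1+1)^2$ is invertible on every homogeneous component of degree below $j_1$, such an eigenfunction is uniquely determined by its degree-$j_1$ homogeneous part.

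The next step is to recognise the right-hand side. Expanding the two geometric factors, $\frac{1-(x_{13}/x_{12})^{j_1+1}}{1-x_{13}/x_{12}}=\sum_{k=0}^{j_1}(x_{13}/x_{12})^{k}$ and similarly for the $x_{12}x_{13}$ factor, and passing to $a=\sqrt{x_{12}x_{13}}$, $b=\sqrt{x_{12}/x_{13}}$, the right-hand side becomes $F_{j_1}:=\frac{1}{j_1+1}S_{j_1}(a)S_{j_1}(b)$, a product of two ordinary $A_1$ Schur polynomials. One checks that $F_{j_1}$ is a genuine $\mathbb Z_2\times\mathbb Z_2$-invariant Laurent polynomial, i.e. an element of $\mathbb C[Y_{12},Y_{13}]$, and, using that each single-variable operator $x^2\partial_x^2+\frac{3x^2+1}{x-x^{-1}}\partial_x+1$ has $S_{j}$ as eigenfunction with eigenvalue $(j+1)^2$, that $F_{j_1}$ is again an eigenfunction of $\hat H_1\big|_{x_{23}=1}$ with eigenvalue $(j_1+1)^2$. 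By the uniqueness in the previous paragraph it therefore suffices to prove that $\Sigma_J$ and $F_{j_1}$ have the same degree-$j_1$ homogeneous part for every admissible $J$ (note $J\ge j_1$ and $J\equiv j_1\bmod 2$).

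For the base case $J=j_1$ the admissible triples are precisely the boundary triples $(j_1,j_2,j_1-j_2)$, $0\le j_2\le j_1$, for which Lemma \ref{lemm:SpecializationCab} supplies the complete expansion. Substituting the coefficients $c_{a,b}$, interchanging the order of summation and resumming the inner hypergeometric sums by Gauss's theorem (exactly as in the proof of Lemma \ref{lemm:SpecializationCab}) yields $\Sigma_{j_1}=F_{j_1}$ on the nose, and in particular matches their top parts. To reach a general $J$ I would specialize the Pieri rule (\ref{eq:PieriForSchur23}) at $x_{23}=1$, where $x_{23}+x_{23}^{-1}=2$, to the relation $2\phi_{j_1,j_2,j_3}(\cdot,1)=\sum_{a,b\in\{\pm1\}}K_{a,b}(j_2,j_3,j_1)\phi_{j_1,j_2+a,j_3+b}(\cdot,1)$; every term here again has first index $j_1$, hence $Y$-degree $j_1$, so passing to degree-$j_1$ parts and summing over the antidiagonal $j_2+j_3=J$ (with Lemma \ref{lemm:NonvanishingKab} controlling the ends of the admissibility strip) should show that the top part of $\Sigma_J$ does not depend on $J$. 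Combined with the base case and the uniqueness statement, this gives $\Sigma_J=F_{j_1}$ for all $J$.

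The main obstacle is this last $J$-independence. Because the coefficients $K_{a,b}(j_2,j_3,j_1)$ depend on the summation indices $j_2,j_3$ individually, the antidiagonal sum of Pieri relations is not directly a relation between $\Sigma_J$ and $\Sigma_{J\pm2}$, and producing the required cancellation, equivalently the vanishing of all candidate monomials of $Y$-degree exceeding $j_1$ that occur in individual summands once $J>j_1$, calls for a careful reorganization: for instance a suitable weighting of the relations along the antidiagonal, or an auxiliary generating-function identity of the type (\ref{eq:Lambda1Lambda2Lambda3Sum}). The Gauss-summation evaluation of the base case is the other computation-heavy point, but it is routine given Lemma \ref{lemm:SpecializationCab}.
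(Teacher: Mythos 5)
You correctly identify the obstacle yourself, and it is fatal as the proposal stands: the reduction of general $J$ to $J=j_1$ is left at ``calls for a careful reorganization,'' but that reorganization is precisely the heart of the paper's proof of this proposition. The paper's resolution is to weight the third Pieri relation (\ref{eq:PieriForSchur23}), specialized at $x_{23}=1$, by the factor $(j_2+1)(j_3+1)$ \emph{before} summing over the antidiagonal $j_2+j_3=J$. With this weight the quantities $(j_2+1)(j_3+1)K_{a,b}(j_2,j_3,j_1)$ combine so that the antidiagonal sums close into a genuine three-term scalar recursion
\begin{equation*}
\left(\tfrac{j_1^2}{4}+\tfrac{j_1}{2}-\tfrac{(J+2)(J+4)}{4}\right)\Sigma_{J+2}
-\left(\tfrac{j_1^2}{2}+j_1-\tfrac{(J+2)^2}{2}\right)\Sigma_{J}
+\left(\tfrac{j_1^2}{4}+\tfrac{j_1}{2}-\tfrac{J(J+2)}{4}\right)\Sigma_{J-2}=0,
\end{equation*}
where $\Sigma_J=\sum_{j_2+j_3=J}\phi_{j_1,j_2,j_3}(x_{12},x_{13},1)$, plus boundary terms proportional to $\phi_{j_1,0,J}$ and $\phi_{j_1,J,0}$ (and their shifts) carrying the prefactor $(j_1+J+2)(j_1-J)$; these vanish either because the prefactor is zero (at $J=j_1$) or because the triple is non-admissible (for $J>j_1$). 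At $J=j_1$ the coefficient of $\Sigma_{J-2}$ vanishes, yielding $\Sigma_{j_1+2}=\Sigma_{j_1}$; since the three coefficients sum to zero, constant sequences solve the recursion, and the coefficient of $\Sigma_{J+2}$ is nonzero for $J\geq j_1$, so $\Sigma_J=\Sigma_{j_1}$ for all $J$ — as an identity of polynomials, not merely of degree-$j_1$ parts. This makes your uniqueness-from-top-degree scaffolding (which is itself correct: the operator preserves the degree filtration of $\mathbb C[Y_{12},Y_{13}]$ and acts on the degree-$N$ graded piece by $(N+1)^2$) unnecessary, but without the weighted recursion your proof does not go through.

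There is also a genuine error in your justification of the eigenfunction property of the right-hand side. The operator $\hat H_1\big|_{x_{23}=1}$ does \emph{not} decouple into single-variable operators in $a=\sqrt{x_{12}x_{13}}$, $b=\sqrt{x_{12}/x_{13}}$: writing $L_x=x^2\partial_x^2+\tfrac{3x^2+1}{x-x^{-1}}\partial_x+1$, one has from the monomial action displayed in the proof of Lemma \ref{lemm:SpecializationCab} that $\hat H_1\big|_{x_{23}=1}(Y_{12}Y_{13})=9\,Y_{12}Y_{13}-8$, whereas $Y_{12}Y_{13}=S_2(a)+S_2(b)-2$ and $\tfrac12(L_a+L_b)$ gives $5S_2(a)+5S_2(b)-2$; the two disagree, so the eigenfunction property of $F_{j_1}=\tfrac{1}{j_1+1}S_{j_1}(a)S_{j_1}(b)$ cannot be inherited factorwise from the $A_1$ case. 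The statement is nonetheless true (it holds only on the diagonal $j=k$) and must be verified by direct substitution, which is what the paper implicitly does. Finally, your base case via double Gauss resummation is heavier than needed: the paper instead observes that $\phi_{j_1,j_2,j_1-j_2}(x_{12},x_{13},1)$, $j_2=0,\dots,j_1$, form a basis of the $(j_1+1)^2$-eigenspace, expands the right-hand side as $\sum_{j_2}c_{j_2}\phi_{j_1,j_2,j_1-j_2}$, and gets all $c_{j_2}=1$ by matching the coefficient $1/(j_1+1)$ of the single top monomial $x_{12}^{j_2}x_{13}^{j_1-j_2}$ against $c_{0,0}=1/(j_1+1)$ from Lemma \ref{lemm:SpecializationCab}.
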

\begin{proof}
We start from the 3-rd Pieri identity in the form
\begin{align*}
& (j_2+1)(j_3+1) K_{1,-1}(j_2,j_3,j_1) \phi_{j_1,j_2+1,j_3-1}(x_{12},x_{13},1) 
\\ & +
(j_2+1)(j_3+1) K_{-1,1}(j_2,j_3,j_1) \phi_{j_1,j_2-1,j_3+1}(x_{12},x_{13},1) \\ & - 2 (j_2+1)(j_3+1) \phi_{j_1,j_2,j_3}(x_{12},x_{13},1) \\
  & = - (j_2+1)(j_3+1) K_{1,1}(j_2,j_3,j_1) \phi_{j_1,j_2+1,j_3+1}(x_{12},x_{13},1) 
\\ & - (j_2+1)(j_3+1) K_{-1,-1}(j_2,j_3,j_1) \phi_{j_1,j_2-1,j_3-1}(x_{12},x_{13},1)
\end{align*}
The sum of the l.h.s. equals to
\begin{align*}
& \sum\limits_{j_2+j_3=J} \left( \ (j_2+1)(j_3+1) K_{1,-1}(j_2,j_3,j_1) \phi_{j_1,j_2+1,j_3-1}(x_{12},x_{13},1) \right.
\\ & +
(j_2+1)(j_3+1) K_{-1,1}(j_2,j_3,j_1) \phi_{j_1,j_2-1,j_3+1}(x_{12},x_{13},1) \\ & \left. - 2 (j_2+1)(j_3+1) \phi_{j_1,j_2,j_3}(x_{12},x_{13},1) \right) \\
& = \left(\frac{j_1^2}{2} + j_1 - \frac{(J+2)^2}{2}\right) \sum\limits_{j_2+j_3=J} \phi_{j_1,j_2,j_3}(x_{12},x_{13},1) -
\frac{1}{4} (j_1 + J + 2)(j_1 - J) \phi_{j_1, 0, J}(x_{12},x_{13},1)\\
&-\frac{1}{4} (j_1 + J + 2)(j_1 - J) \phi_{j_1, J, 0}(x_{12},x_{13},1)
\end{align*}
At the same time the sum of the r.h.s. equals to
\begin{align*}
\sum\limits_{j_2+j_3=J} \left( \ - (j_2+1)(j_3+1) K_{1,1}(j_2,j_3,j_1) \phi_{j_1,j_2+1,j_3+1}(x_{12},x_{13},1)\right.\\
\left.- (j_2+1)(j_3+1) K_{-1,-1}(j_2,j_3,j_1) \phi_{j_1,j_2-1,j_3-1}(x_{12},x_{13},1) \ \right) \\
= \left(\frac{j_1^2}{4} + \frac{j_1}{2} - \frac{(J+2)(J+4)}{4}\right) \sum\limits_{j_2+j_3=J+2} \phi_{j_1,j_2,j_3}(x_{12},x_{13},1)\\
+ \left(\frac{j_1^2}{4} + \frac{j_1}{2} - \frac{J(J+2)}{4}\right) \sum\limits_{j_2+j_3=J-2} \phi_{j_1,j_2,j_3}(x_{12},x_{13},1) \\
-
\frac{1}{4} (j_1 + J + 4)(j_1 - J - 2) \phi_{j_1, 0, J + 2}(x_{12},x_{13},1)\\
-\frac{1}{4} (j_1 + J + 4)(j_1 - J - 2) \phi_{j_1, J + 2, 0}(x_{12},x_{13},1)
\end{align*}
Note that the terms proportional to $\phi$-functions with one zero index all vanish, because of vanishing prefactors. We therefore find a recursive relation
\begin{align*}
-\left(\frac{j_1^2}{4} + \frac{j_1}{2} - \frac{(J+2)(J+4)}{4}\right) \sum\limits_{j_2+j_3=J+2} \phi_{j_1,j_2,j_3}(x_{12},x_{13},1) \\ +
\left(\frac{j_1^2}{2} + j_1 - \frac{(J+2)^2}{2}\right) \sum\limits_{j_2+j_3=J} \phi_{j_1,j_2,j_3}(x_{12},x_{13},1) 
\\ - \left(\frac{j_1^2}{4} + \frac{j_1}{2} - \frac{J(J+2)}{4}\right) \sum\limits_{j_2+j_3=J-2} \phi_{j_1,j_2,j_3}(x_{12},x_{13},1) = 0
\end{align*}
Specializing here $J=j_1$ we find that

\begin{equation}
\sum\limits_{j_2+j_3=j_1+2} \phi_{j_1,j_2,j_3}(x_{12},x_{13},1) = \sum\limits_{j_2+j_3=j_1} \phi_{j_1,j_2,j_3}(x_{12},x_{13},1)
\end{equation}
With these initial conditions, this second order recursion has a unique solution, namely for all $J$ we have

\begin{equation*}
\sum\limits_{j_2+j_3=J} \phi_{j_1,j_2,j_3}(x_{12},x_{13},1) = \sum\limits_{j_2+j_3=j_1} \phi_{j_1,j_2,j_3}(x_{12},x_{13},1)
\end{equation*}
Therefore, to prove the current proposition, it is sufficient to prove it for $J=j_1$. For this, note that

\begin{equation*}
\dfrac{(1-x_{13}^{j_1+1}/x_{12}^{j_1+1})(1-x_{13}^{j_1+1} x_{12}^{j_1+1})}{(j_1+1)x_{13}^{j_1}(1-x_{13}/x_{12})(1-x_{13}x_{12})}
\end{equation*}
is an eigenfunction of $\hat H_1\big|_{x_{23}=1}$ with eigenvalue $(j_1+1)^2$. At the same time, functions $\phi_{j_1,j_2,j_1-j_2}(x_{12},x_{13},1)$ for $j_2 = 0, \ldots, j_1$ form a basis in the space of eigenfunctions of $\hat H_1\big|_{x_{23}=1}$ with eigenvalue $(j_1+1)^2$. Therefore, there exist coefficients $c_{j_2}$ such that

\begin{equation*}
\sum\limits_{j_2=0}^{j_1} c_{j_2} \phi_{j_1,j_2,j_1-j_2}(x_{12},x_{13},1) = \dfrac{(1-x_{13}^{j_1+1}/x_{12}^{j_1+1})(1-x_{13}^{j_1+1} x_{12}^{j_1+1})}{(j_1+1)x_{13}^{j_1}(1-x_{13}/x_{12})(1-x_{13}x_{12})}
\end{equation*}
To find $c_{j_2}$, note that expansion of the r.h.s. in monomials in $x_{12}, x_{13}$ contains the monomial $x_{12}^{j_2} x_{13}^{j_1-j_2}$ which is a leading term in $\phi_{j_1,j_2,j_1-j_2}(x_{12},x_{13},1)$, with coefficient $1/(j_1+1)$. At the same time, the coefficient in front of the leading term in $\phi_{j_1,j_2,j_1-j_2}(x_{12},x_{13},1)$ is given by $c_{0,0} = 1/(j_1+1)$. Hence, all $c_{j_2} = 1$, what completes the proof.

\end{proof}

\begin{corollary}
The leading terms at $\widetilde\kappa=\widetilde\lambda$ of the Genus two Cauchy sum and its logarithmic derivative when specialized to $X_{23}=0$ are given by the following formula
\begin{subequations}
\begin{align}
\Omega_-\big(\widetilde X_{12},\widetilde X_{13},0\,\big|\,\widetilde\kappa\big)=&\dfrac{1}{\widetilde\kappa(\widetilde\kappa^2-1)} \dfrac{-2 \ \mathop{\mathrm{arctanh}}\left(\dfrac{\widetilde X_{12}^2-\widetilde X_{13}^2}{\widetilde X_{12}^2+\widetilde X_{13}^2-2}\right)}{\widetilde X_{12}^2-\widetilde X_{13}^2},
\label{eq:InitialConditionsOmegaMinus}\\[0.5em]
\Omega_+\big(\widetilde X_{12},\widetilde X_{13},0\,\big|\,\widetilde\kappa\big)=&\dfrac{1}{\widetilde\kappa(\widetilde\kappa^2-1)} \dfrac{-2}{(\widetilde X_{12}^2-1)(\widetilde X_{13}^2-1)}.
\label{eq:InitialConditionsOmegaPlus}
\end{align}
\label{eq:InitialConditions}
\end{subequations}
\end{corollary}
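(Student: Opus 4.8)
The plan is to reduce the whole statement to Proposition~\ref{prop:SpecializedSum} by exploiting that the specialization $\widetilde X_{23}=0$ is, through the change of variables (\ref{eq:ChangeOfVariablesXTilde}), exactly the specialization $x_{23}=1$. First I would take the defining series (\ref{eq:CMinusDefinition}) for $\mathcal C_-$, set $x_{23}=1$, and regroup the admissible triples according to the value $j_2+j_3=J$. For fixed $j_1$ and fixed $J$ the inner sum is precisely $\sum_{j_2+j_3=J}\phi_{j_1,j_2,j_3}(x_{12},x_{13},1)$, which by Proposition~\ref{prop:SpecializedSum} equals a closed expression $S_{j_1}(x_{12},x_{13})$ that is \emph{independent of $J$}. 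This $J$-independence is the crux: the remaining sum over $J\ge j_1$ with $J\equiv j_1\bmod 2$ is geometric and produces the factor $\lambda^{j_1}/(1-\lambda^2)$, so that $\mathcal C_-\big|_{x_{23}=1}=\tfrac{1}{1-\lambda^2}\sum_{j_1\ge 0}(\kappa^{j_1+1}-\kappa^{-j_1-1})\,S_{j_1}(x_{12},x_{13})\,\lambda^{j_1}$, and likewise for $\mathcal C_+$ with $(\kappa^{j_1+1}-\kappa^{-j_1-1})$ replaced by $(j_1+1)(\kappa^{j_1+1}+\kappa^{-j_1-1})$ according to (\ref{eq:CPlusDefinition}).

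Next I would perform the sum over $j_1$. Writing $p=x_{13}/x_{12}$ and $q=x_{13}x_{12}$, the closed form of Proposition~\ref{prop:SpecializedSum} factors as $S_{j_1}=\tfrac{1}{(j_1+1)(1-p)(1-q)}\big[(x_{13}^{-1})^{j_1}-p\,(x_{12}^{-1})^{j_1}-q\,x_{12}^{j_1}+pq\,x_{13}^{j_1}\big]$, a linear combination of four pure geometric terms $z^{j_1}$ with $z\in\{x_{13}^{-1},x_{12}^{-1},x_{12},x_{13}\}$. For $\mathcal C_-$ the surviving $1/(j_1+1)$ turns each geometric sum into a logarithm through $\sum_{n\ge1}w^n/n=-\ln(1-w)$, giving four terms of the shape $-\tfrac{1}{\lambda z}\ln\tfrac{1-\kappa\lambda z}{1-\lambda z/\kappa}$; for $\mathcal C_+$ the prefactor $(j_1+1)$ cancels the $1/(j_1+1)$ in $S_{j_1}$, so each sum is genuinely geometric and yields the rational expression $\tfrac{\kappa}{1-\kappa\lambda z}+\tfrac{1}{\kappa-\lambda z}$. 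This is precisely the mechanism by which the arctangent (logarithm) appears in $\Omega_-$ while $\Omega_+$ remains rational.

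Finally I would pass to the $\widetilde X$-variables via $x_{ij}=1+(1-\widetilde\lambda/\widetilde\kappa)\widetilde X_{ij}$, set $u:=1-\widetilde\lambda/\widetilde\kappa$, and extract the leading singular part as $u\to 0$ to read off $\Omega_\pm$ from (\ref{eq:OmegaMinusDefinition})--(\ref{eq:OmegaPlusDefinition}). The double pole is produced entirely by the prefactor: since $1-p=u(\widetilde X_{12}-\widetilde X_{13})/x_{12}$ and $1-q=-u\big[(\widetilde X_{12}+\widetilde X_{13})+u\,\widetilde X_{12}\widetilde X_{13}\big]$, one gets $\tfrac{1}{(1-p)(1-q)}=-\tfrac{1}{u^2(\widetilde X_{12}^2-\widetilde X_{13}^2)}\big(1+O(u)\big)$, while $\tfrac{1}{1-\lambda^2}=\tfrac{1}{1-\widetilde\kappa^2}\big(1+O(u)\big)$. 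For $\Omega_-$ I would note that each of the four logarithms carries a divergent piece $\tfrac1\kappa\ln u$ coming from $\ln(1-\lambda z/\kappa)$ with $z\to 1$, but the sign pattern $(+,-,-,+)$ makes these cancel, leaving the finite combination $\tfrac1\kappa\ln\tfrac{1-\widetilde X_{13}^2}{1-\widetilde X_{12}^2}$; multiplying by the $u^{-2}$ prefactor and extracting the leading coefficient gives (\ref{eq:InitialConditionsOmegaMinus}) upon rewriting $\ln\tfrac{1-\widetilde X_{13}^2}{1-\widetilde X_{12}^2}=-2\,\mathrm{arctanh}\!\big(\tfrac{\widetilde X_{12}^2-\widetilde X_{13}^2}{\widetilde X_{12}^2+\widetilde X_{13}^2-2}\big)$. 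For $\Omega_+$ the rational terms $\tfrac{1}{\kappa-\lambda z}$ each contribute a simple pole $\tfrac{1}{\kappa u(1-\zeta)}$ with $\zeta=\pm\widetilde X_{ij}$, and summing the four residues yields $\tfrac{2}{\kappa}\big(\tfrac{1}{1-\widetilde X_{13}^2}-\tfrac{1}{1-\widetilde X_{12}^2}\big)$; combined with the $u^{-2}$ prefactor this builds the third-order pole whose leading coefficient simplifies to (\ref{eq:InitialConditionsOmegaPlus}).

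The main obstacle I anticipate is the bookkeeping of singular orders under the $\widetilde\lambda$-dependent change of variables: the four logarithmic or rational pieces are individually more singular than their admissible combination (through the spurious $\ln u$ in the minus case), so I must track both the cancellation of those $\ln u$ terms and the $O(u)$ corrections to $p$, $q$ and to $1-\lambda^2$ that could otherwise contaminate the leading coefficient. Making this rigorous requires justifying the termwise $j_1$-summation as an identity of meromorphic functions rather than a convergent series near $\widetilde\lambda=\widetilde\kappa$; this is legitimate because, by Proposition~\ref{prop:PoleAtKappaEqualsLambda}, every coefficient in the $\widetilde X$-expansion is a rational function of $\widetilde\kappa,\widetilde\lambda$, so the generating-function identities above are genuine equalities of such rational functions and the stated leading coefficients are the only ones that survive.
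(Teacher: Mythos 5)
Your proposal is correct and follows essentially the same route as the paper: reduce to Proposition~\ref{prop:SpecializedSum} via the $J$-independence of the specialized sums (yielding the factor $\lambda^{j_1}/(1-\lambda^2)$), resum the geometric series for $\mathcal C_+$ and the logarithmic series for $\mathcal C_-$ over $j_1$, and then extract the leading coefficient at $\widetilde\lambda=\widetilde\kappa$, with the arctanh arising from the same four-term log combination. The only difference is cosmetic: you read off the leading pole coefficients directly from the $u=1-\widetilde\lambda/\widetilde\kappa$ asymptotics of the four-term decomposition (tracking the $\ln u$ cancellation), whereas the paper first writes the fully resummed closed forms and then computes the residues.
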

\begin{proof}
Proposition (\ref{prop:SpecializedSum}) implies that
\begin{align*}
& \mathcal C_+(x_{12},x_{13},1|\kappa,\lambda)
=\sum_{(j_1,j_2,j_3)\in\mathbf J}\phi_{j_1,j_2,j_3}(x_{12},x_{13},1) (j_1+1)(\kappa^{j_1+1}+\kappa^{-j_1-1})\lambda^{j_2+j_3} \\ & = \sum\limits_{j_1 = 0}^{\infty} \dfrac{(1-x_{13}^{j_1+1}/x_{12}^{j_1+1})(1-x_{13}^{j_1+1} x_{12}^{j_1+1})}{x_{13}^{j_1}(1-x_{13}/x_{12})(1-x_{13}x_{12})} (\kappa^{j_1+1}+\kappa^{-j_1-1}) \dfrac{\lambda^{j_1}}{1-\lambda^2} 
\end{align*}
Taking the sum of the geometric series, we find

\begin{align*}
& \mathcal C_+(x_{12},x_{13},1|\kappa,\lambda)= \dfrac{1}{(\lambda \kappa x_{13} - 1) (\lambda \kappa x_{12} - 1) (\lambda \kappa - x_{13}) (\lambda x_{13} - \kappa) (-\kappa x_{13} + \lambda) (\lambda \kappa - x_{12}) (\lambda x_{12} - \kappa) (-\kappa x_{12} + \lambda)} \\ & (\lambda^4 \kappa^4 x_{12} x_{13} - \lambda^2 \kappa^6 x_{12} x_{13} + \lambda^2 \kappa^4 x_{12}^2 x_{13}^2 - 2 \lambda^3 \kappa^3 x_{12}^2 x_{13} - 2 \lambda^3 \kappa^3 x_{12} x_{13}^2 + \lambda^4 \kappa^2 x_{12} x_{13} + \lambda^2 \kappa^4 x_{12}^2 + 3 \lambda^2 \kappa^4 x_{12} x_{13} \\ & + \lambda^2 \kappa^4 x_{13}^2 + \lambda^2 \kappa^2 x_{12}^2 x_{13}^2 - 2 \lambda^3 \kappa^3 x_{12} - 2 \lambda^3 \kappa^3 x_{13} - 2 \lambda \kappa^3 x_{12}^2 x_{13} - 2 \lambda \kappa^3 x_{12} x_{13}^2 + \lambda^2 \kappa^4 + \lambda^2 \kappa^2 x_{12}^2 + 3 \lambda^2 \kappa^2 x_{12} x_{13} \\ & + \lambda^2 \kappa^2 x_{13}^2 + \kappa^4 x_{12} x_{13} - 2 \lambda \kappa^3 x_{12} - 2 \lambda \kappa^3 x_{13} + \lambda^2 \kappa^2 - \lambda^2 x_{12} x_{13} + \kappa^2 x_{12} x_{13}) \kappa x_{13} x_{12}
\end{align*}
Computing the residue, we find

\begin{align*}
\Omega_+\big(\widetilde X_{12},\widetilde X_{13}, 0\big|\widetilde\kappa\big):=&\mathop{\mathrm{Res}}_{\widetilde\lambda=\widetilde\kappa}(1-\widetilde\lambda/\widetilde\kappa)^2\mathcal C_+\big(\widetilde X_{12},\widetilde X_{13},0|\widetilde\kappa,\widetilde\lambda\big)=\dfrac{1}{\widetilde\kappa(\widetilde\kappa^2-1)} \dfrac{-2}{(\widetilde X_{12}^2-1)(\widetilde X_{13}^2-1)}
\end{align*}
Similarly, for the other case we find

\begin{align*}
& \mathcal C_-(x_{12},x_{13},1|\kappa,\lambda)
=\sum_{(j_1,j_2,j_3)\in\mathbf J}\phi_{j_1,j_2,j_3}(x_{12},x_{13},1) (\kappa^{j_1+1}-\kappa^{-j_1-1})\lambda^{j_2+j_3} \\ & = \sum\limits_{j_1 = 0}^{\infty} \dfrac{(1-x_{13}^{j_1+1}/x_{12}^{j_1+1})(1-x_{13}^{j_1+1} x_{12}^{j_1+1})}{(j_1+1) x_{13}^{j_1}(1-x_{13}/x_{12})(1-x_{13}x_{12})} (\kappa^{j_1+1}-\kappa^{-j_1-1}) \dfrac{\lambda^{j_1}}{1-\lambda^2} 
\end{align*}
Taking the sum of the logarithmic series, we find

\begin{equation*}
\mathcal C_-(x_{12},x_{13},1|\kappa,\lambda)= \dfrac{x_{12}x_{13}}{(x_{12}-x_{13})(1-x_{12}x_{13}) \lambda (1-\lambda^2)} \ln \left( \dfrac{(1 - \lambda \kappa x_{12}) (x_{12} - \lambda\kappa) (\kappa x_{13} - \lambda) (\kappa - \lambda x_{13})}{(\kappa - \lambda x_{12}) (\kappa x_{12} - \lambda) (x_{13}- \lambda \kappa)(1 - \lambda \kappa x_{13})} \right)
\end{equation*}
Computing the residue and rewriting logarithms using the arctanh function, we find

\begin{align*}
\Omega_-\big(\widetilde X_{12},\widetilde X_{13}, 0\big|\widetilde\kappa\big):=&\mathop{\mathrm{Res}}_{\widetilde\lambda=\widetilde\kappa}(1-\widetilde\lambda/\widetilde\kappa)\mathcal C_-\big(\widetilde X_{12},\widetilde X_{13},0|\widetilde\kappa,\widetilde\lambda\big)=\dfrac{1}{\widetilde\kappa(\widetilde\kappa^2-1)} \dfrac{-2 \ \mathop{\mathrm{arctanh}}\left(\dfrac{\widetilde X_{12}^2-\widetilde X_{13}^2}{\widetilde X_{12}^2+\widetilde X_{13}^2-2}\right)}{\widetilde X_{12}^2-\widetilde X_{13}^2}
\end{align*}

\end{proof}

\begin{theorem}
The leading term at $\widetilde\kappa=\widetilde\lambda$ of the Genus two Cauchy sum is given by the following formula
\begin{equation}
\begin{aligned}
\Omega_-\big(\widetilde X_{12},\widetilde X_{13},\widetilde X_{23}\,\big|\,\widetilde\kappa\big)=\dfrac{1}{\widetilde\kappa(\widetilde\kappa^2-1)}  \dfrac{-2\mathop{\mathrm{arctanh}}\left(\dfrac{\sqrt{\widetilde X_{12}^4 - 2 \widetilde X_{12}^2 \widetilde X_{13}^2 - 2 \widetilde X_{12}^2 \widetilde X_{23}^2 + \widetilde X_{13}^4 - 2 \widetilde X_{13}^2 \widetilde X_{23}^2 + \widetilde X_{23}^4 + 4 \widetilde X_{23}^2}}{\widetilde X_{12}^2 + \widetilde X_{13}^2 - \widetilde X_{23}^2 - 2}\right)}{\sqrt{\widetilde X_{12}^4 - 2 \widetilde X_{12}^2 \widetilde X_{13}^2 - 2 \widetilde X_{12}^2 \widetilde X_{23}^2 + \widetilde X_{13}^4 - 2 \widetilde X_{13}^2 \widetilde X_{23}^2 + \widetilde X_{23}^4 + 4 \widetilde X_{23}^2}}\\[0.5em]
\end{aligned}
\label{eq:LeadingTermOmegaMinus}
\end{equation}
\label{th:Arctangent}
\end{theorem}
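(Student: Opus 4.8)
The plan is to verify that the right-hand side of (\ref{eq:LeadingTermOmegaMinus}) enjoys the two properties that the preceding results have already pinned on $\Omega_-$: the partial differential equation (\ref{eq:DifferentialEquationOmegaMinus}) of Proposition \ref{prop:DifferentialEquationsMain}, and the boundary value (\ref{eq:InitialConditionsOmegaMinus}) on the slice $\widetilde X_{23}=0$. Since the scalar $1/(\widetilde\kappa(\widetilde\kappa^2-1))$ factors out of both (\ref{eq:DifferentialEquationOmegaMinus}) and (\ref{eq:InitialConditionsOmegaMinus}), it is enough to treat the $\widetilde X$-dependent factor. I write $F$ for that factor and abbreviate $\widetilde X_{12},\widetilde X_{13},\widetilde X_{23}$ by $x,y,z$, and I set $P=x^4+y^4+z^4-2x^2y^2-2x^2z^2-2y^2z^2+4z^2$ and $Q=x^2+y^2-z^2-2$, so the claim is $F=-2\,\mathop{\mathrm{arctanh}}(\sqrt P/Q)/\sqrt P$.

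The first observation I would record is the algebraic identity $Q^2-P=4(x^2-1)(y^2-1)$, obtained by direct expansion. It exhibits $F$ as a symmetric function of the two roots $R_\pm=Q\pm\sqrt P$, whose product $R_+R_-=4(x^2-1)(y^2-1)$ is free of $z$; equivalently $F=-\tfrac{2}{Q}\,h(P/Q^2)$, where $h(w)=\mathop{\mathrm{arctanh}}(\sqrt w)/\sqrt w=\sum_{n\ge0}w^n/(2n+1)$ is entire and satisfies the first order ODE $2w(1-w)h'(w)+(1-w)h(w)=1$. To check (\ref{eq:DifferentialEquationOmegaMinus}) I would pass to the even coordinates $\xi=x^2,\eta=y^2,\zeta=z^2$, in which
\[
\hat{\mathcal H}_1^{(-2)}=4\xi\partial_\xi^2+4\eta\partial_\eta^2+4(\xi+\eta-\zeta)\partial_\xi\partial_\eta+6\partial_\xi+6\partial_\eta,\qquad d_{\widetilde X}=2(\xi\partial_\xi+\eta\partial_\eta+\zeta\partial_\zeta),
\]
substitute $F=-\tfrac{2}{Q}h(P/Q^2)$, and collect the outcome as a combination of $h,h',h''$ with coefficients rational in $\xi,\eta,\zeta$. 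Using $Q^2-P=4(\xi-1)(\eta-1)$ together with the ODE for $h$ (and its derivative for $h''$), all explicit $\zeta$-dependence should cancel and the left-hand side of (\ref{eq:DifferentialEquationOmegaMinus}) collapse to zero. This reduction is the computational heart of the argument; the cancellation of the $\zeta$-terms — which appear because the Euler operator does not act diagonally on the invariants, one has $d_{\widetilde X}P=4P-8\zeta$ and $d_{\widetilde X}Q=2Q+4$ — is the step that must be executed carefully.

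Matching the boundary value is immediate: on $\widetilde X_{23}=0$ one has $\zeta=0$, so $P\to(x^2-y^2)^2$ and $Q\to x^2+y^2-2$, whence $\sqrt P\to x^2-y^2$ and the closed form reduces to exactly the expression (\ref{eq:InitialConditionsOmegaMinus}).

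It remains to establish uniqueness, and this I expect to be the main obstacle. Expanding $F=\sum_{k\ge0}\widetilde X_{23}^{2k}G_k(\widetilde X_{12},\widetilde X_{13})$, equation (\ref{eq:DifferentialEquationOmegaMinus}) becomes a recursion $M_kG_k=-E\,G_{k-1}$ with $G_0$ fixed by (\ref{eq:InitialConditionsOmegaMinus}), where $E=-4\partial_\xi\partial_\eta$ arises from the single $-\widetilde X_{23}^2$ in the coefficient of $\partial_{\widetilde X_{12}}\partial_{\widetilde X_{13}}$ in (\ref{eq:H1m2CalDefinition}), and $M_k$ is $\hat{\mathcal H}_1^{(-2)}|_{\widetilde X_{23}=0}$ shifted by a scalar built from the Euler operator. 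The delicate feature is that $\widetilde X_{23}=0$ is a characteristic surface for (\ref{eq:DifferentialEquationOmegaMinus}): the operator $\hat{\mathcal H}_1^{(-2)}$ carries no $\partial_{\widetilde X_{23}}$, and the top derivative $\partial_{\widetilde X_{23}}^2$ of the full operator enters only with coefficient proportional to $\widetilde X_{23}^2$. As a consequence the trace on $\widetilde X_{23}=0$ does \emph{not} determine a solution within formal power series: the operators $M_k$ fail to be injective (already $\widetilde X_{23}^2$ lies in the kernel of $\hat{\mathcal H}_1^{(-2)}$), so the difference of two solutions with equal trace is constrained only through these non-injective operators and need not vanish formally. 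The resolution I would pursue is to use that both $\Omega_-$ and the closed form are genuine convergent power series at the origin, and to show that the only trace-free solution analytic at $\widetilde X=0$ is zero; concretely, one would propagate the recursion and show that any kernel contribution violates the Cauchy growth estimates obeyed by the explicit coefficients of $\Omega_-$, which by (\ref{eq:CMinusXtildeExpansion}) and the degree bound (\ref{eq:DegreeBoundscm12m13m23}) are controlled finite sums. Proving this analytic uniqueness for the characteristic Cauchy problem is the crux; granted it, the verifications of the preceding paragraphs finish the proof.
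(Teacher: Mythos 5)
Your first two steps coincide with the paper's proof: the paper likewise verifies by direct substitution that the right-hand side of (\ref{eq:LeadingTermOmegaMinus}) solves (\ref{eq:DifferentialEquationOmegaMinus}) and restricts to (\ref{eq:InitialConditionsOmegaMinus}) on $\widetilde X_{23}=0$ (your identity $Q^2-P=4(x^2-1)(y^2-1)$ is correct and is a sensible way to organize that computation). Two remarks on the data: the paper also supplies the second piece of Cauchy data, $\partial_{\widetilde X_{23}}\Omega_-\big|_{\widetilde X_{23}=0}=0$, derived in (\ref{eq:OmegaMinusZeroNormalDerivative}) from the $x_{23}\leftrightarrow x_{23}^{-1}$ symmetry; your expansion of $F$ in \emph{even} powers $\widetilde X_{23}^{2k}$ assumes full evenness in $\widetilde X_{23}$, which this symmetry does not give (in the coordinate $X_{23}=x_{23}-1$ the involution is $X_{23}\mapsto -X_{23}+X_{23}^2-\cdots$, so only the first normal derivative is forced to vanish), and the recursion should be run over all powers of $\widetilde X_{23}$. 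You are also right — and more careful than the paper's one-line appeal to uniqueness — that $\widetilde X_{23}=0$ is characteristic for (\ref{eq:DifferentialEquationOmegaMinus}) and that the operators propagating the data are not injective on power series.

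The genuine gap is in your proposed resolution: the analytic-uniqueness statement you defer to (``the only trace-free solution analytic at $\widetilde X=0$ is zero'', to be established via Cauchy growth estimates) is \emph{false}, so the crux of your argument cannot be carried out. Indeed, set
\begin{equation*}
D:=\widetilde X_{23}^2\,g\big(\widetilde X_{12}\big),\qquad g(u):=\sum_{k\geq0}\frac{(2k+2)(2k+3)}{6}\,u^{2k}=\frac{1}{6u}\left(\frac{1}{(1-u)^3}-\frac{1}{(1+u)^3}\right).
\end{equation*}
Since $D$ is independent of $\widetilde X_{13}$, the operator (\ref{eq:H1m2CalDefinition}) gives $\hat{\mathcal H}_1^{(-2)}D=\widetilde X_{23}^2\big(g''+\tfrac{2}{\widetilde X_{12}}g'\big)$, while $d_{\widetilde X}$ acts on $\widetilde X_{23}^2\widetilde X_{12}^{2k}$ with eigenvalue $2k+2$, so that $\big(d_{\widetilde X}^2+5d_{\widetilde X}+6\big)$ contributes $(2k+4)(2k+5)$; equation (\ref{eq:DifferentialEquationOmegaMinus}) for $D$ reduces to the coefficient recursion $(2k+2)(2k+3)\,a_{k+1}=(2k+4)(2k+5)\,a_k$, which the displayed $a_k$ satisfy. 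Thus $D$ is a nonzero solution of (\ref{eq:DifferentialEquationOmegaMinus}), analytic for $|\widetilde X_{12}|<1$ with only polynomially growing Taylor coefficients, whose trace and normal derivative on $\widetilde X_{23}=0$ both vanish (replace $D$ by $\widetilde X_{23}^2\big(g(\widetilde X_{12})+g(\widetilde X_{13})\big)$ if you want it to respect the $\widetilde X_{12}\leftrightarrow\widetilde X_{13}$ symmetry of $\Omega_-$). Consequently no growth or analyticity argument can single out the solution: the characteristic Cauchy problem is under-determined even in the analytic category, and adding $D$ to the closed form produces another analytic, symmetric solution with identical data. A correct completion of the uniqueness step therefore has to use more information about $\Omega_-$ than the single equation (\ref{eq:DifferentialEquationOmegaMinus}) plus the data on $\widetilde X_{23}=0$ — for instance an order-by-order comparison of the actual coefficients of $\Omega_-$, which are computable from (\ref{eq:CMinusXtildeExpansion}) together with the degree bounds (\ref{eq:DegreeBoundscm12m13m23}), against the Taylor expansion of the closed form — and the same caveat applies to the paper's own brief appeal to ``uniqueness of the Cauchy problem,'' which your proposal correctly refuses to take for granted but does not succeed in repairing.
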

\begin{proof}
By Definition \ref{def:GenusTwoSchurPieri}, genus two Schur polynomials $\phi_{j_1,j_2,j_3}$ are symmetric in $x_{23}\leftrightarrow x_{23}^{-1}$. Hence, by (\ref{eq:CMinusDefinition}) we get
\begin{equation*}
\frac\partial{\partial x_{23}}\mathcal C_-(x_{12},x_{13},x_{23}|\kappa,\lambda)\Big|_{x_{23}=1}=0.
\end{equation*}
Now using (\ref{eq:OmegaMinusDefinition}) combined with (\ref{eq:ChangeOfVariablesxToX}) and (\ref{eq:ChangeXtoTildeX}) we conclude
\begin{equation}
\frac\partial{\partial\widetilde X_{23}}\Omega_-\big(\widetilde X_{12},\widetilde X_{13},\widetilde X_{23}\,\big|\,\widetilde\kappa\big)\Big|_{\widetilde X_{23}=0}=0.
\label{eq:OmegaMinusZeroNormalDerivative}
\end{equation}

Next, by substitution we verify that the right hand side of (\ref{eq:LeadingTermOmegaMinus}) is a solution of the Cauchy problem for the second order partial differential equation (\ref{eq:DifferentialEquationOmegaMinus}) with initial conditions on the plane $\widetilde X_{23}=0$ specified by (\ref{eq:InitialConditionsOmegaMinus}) and (\ref{eq:OmegaMinusZeroNormalDerivative}). Because solution of the Cauchy problem is unique, the right hand side of (\ref{eq:LeadingTermOmegaMinus}) must coincide with $\Omega_-\big(\widetilde X_{12},\widetilde X_{13},\widetilde X_{23}\,\big|\,\widetilde\kappa\big)$.
\end{proof}

\begin{corollary}
The leading term of the logarithmic derivative of the Genus two Cauchy sum is given by
\begin{equation*}
\begin{aligned}
\Omega_+\big(\widetilde X_{12},\widetilde X_{13},\widetilde X_{23}\,\big|\,\widetilde\kappa\big)=\dfrac{1}{\widetilde\kappa(\widetilde\kappa^2-1)} \dfrac{8 \widetilde X_{23}^2 \mathop{\mathrm{arctanh}}\left(\dfrac{\sqrt{\widetilde X_{12}^4 - 2 \widetilde X_{12}^2 \widetilde X_{13}^2 - 2 \widetilde X_{12}^2 \widetilde X_{23}^2 + \widetilde X_{13}^4 - 2 \widetilde X_{13}^2 \widetilde X_{23}^2 + \widetilde X_{23}^4 + 4 \widetilde X_{23}^2}}{\widetilde X_{12}^2 + \widetilde X_{13}^2 - \widetilde X_{23}^2 - 2}\right)}{(\widetilde X_{12}^4 - 2 \widetilde X_{12}^2 \widetilde X_{13}^2 - 2 \widetilde X_{12}^2 \widetilde X_{23}^2 + \widetilde X_{13}^4 - 2 \widetilde X_{13}^2 \widetilde X_{23}^2 + \widetilde X_{23}^4 + 4 \widetilde X_{23}^2)^{3/2}}\\
- \dfrac{1}{\widetilde\kappa(\widetilde\kappa^2-1)} \dfrac{2(\widetilde X_{12}^4 - 2 \widetilde X_{12}^2 \widetilde X_{13}^2 - \widetilde X_{12}^2 \widetilde X_{23}^2 + \widetilde X_{13}^4 - \widetilde X_{13}^2 \widetilde X_{23}^2 + 2 \widetilde X_{23}^2)}{(\widetilde X_{12}^2-1)(\widetilde X_{13}^2-1)(\widetilde X_{12}^4 - 2 \widetilde X_{12}^2 \widetilde X_{13}^2 - 2 \widetilde X_{12}^2 \widetilde X_{23}^2 + \widetilde X_{13}^4 - 2 \widetilde X_{13}^2 \widetilde X_{23}^2 + \widetilde X_{23}^4 + 4 \widetilde X_{23}^2)}.
\end{aligned}
\end{equation*}
\end{corollary}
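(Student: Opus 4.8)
The plan is to obtain $\Omega_+$ directly from $\Omega_-$ by applying the first-order Euler operator that relates them, rather than solving the differential equation (\ref{eq:DifferentialEquationOmegaPlus}) afresh. Concretely, equation (\ref{eq:LogarithmicDerivativeLeadingTermsRelation}) gives $\Omega_+=(-2-d_{\widetilde X})\Omega_-$, and Theorem \ref{th:Arctangent} supplies the closed form of $\Omega_-$. Since $d_{\widetilde X}$ differentiates only in the $\widetilde X$ variables, the prefactor $\tfrac1{\widetilde\kappa(\widetilde\kappa^2-1)}$ passes through untouched, so the whole computation reduces to evaluating a single homogeneity-type derivative of the scalar function $f=\mathop{\mathrm{arctanh}}(\sqrt R/Q)/\sqrt R$.

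To organise the computation I would introduce the shorthand
\begin{align*}
R&=\widetilde X_{12}^4 - 2 \widetilde X_{12}^2 \widetilde X_{13}^2 - 2 \widetilde X_{12}^2 \widetilde X_{23}^2 + \widetilde X_{13}^4 - 2 \widetilde X_{13}^2 \widetilde X_{23}^2 + \widetilde X_{23}^4 + 4 \widetilde X_{23}^2,\\
Q&=\widetilde X_{12}^2 + \widetilde X_{13}^2 - \widetilde X_{23}^2 - 2,
\end{align*}
so that $\Omega_-=\tfrac{-2}{\widetilde\kappa(\widetilde\kappa^2-1)}\,f$ with $f=\mathop{\mathrm{arctanh}}(\sqrt R/Q)/\sqrt R$. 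The three algebraic facts that drive the simplification are the factorisation $Q^2-R=4(\widetilde X_{12}^2-1)(\widetilde X_{13}^2-1)$, together with the two Euler-derivative identities $d_{\widetilde X}R=4R-8\widetilde X_{23}^2$ and $d_{\widetilde X}Q=2Q+4$; the latter two follow at once by separating $R$ and $Q$ into their homogeneous pieces of degrees $4,2$ and $2,0$ respectively and applying $d_{\widetilde X}$ as the degree operator.

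With these in hand I would apply the chain rule to $f$. Writing $g=\sqrt R/Q$, the identity $1-g^2=(Q^2-R)/Q^2$ combined with the factorisation turns $1/(1-g^2)$ into $Q^2/\big(4(\widetilde X_{12}^2-1)(\widetilde X_{13}^2-1)\big)$, while $d_{\widetilde X}g$ simplifies, after clearing $\sqrt R$, to $-4(\widetilde X_{23}^2 Q+R)/(\sqrt R\,Q^2)$. Combining these gives $d_{\widetilde X}\mathop{\mathrm{arctanh}}(g)=-(\widetilde X_{23}^2 Q+R)/\big((\widetilde X_{12}^2-1)(\widetilde X_{13}^2-1)\sqrt R\big)$, and separately $d_{\widetilde X}(R^{-1/2})=(-2R+4\widetilde X_{23}^2)R^{-3/2}$. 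Assembling $-2f-d_{\widetilde X}f$ and restoring the $\widetilde\kappa$-prefactor then yields the asserted two-term expression, the polynomial numerator of the rational term being exactly $\widetilde X_{23}^2 Q+R=\widetilde X_{12}^4 - 2 \widetilde X_{12}^2 \widetilde X_{13}^2 - \widetilde X_{12}^2 \widetilde X_{23}^2 + \widetilde X_{13}^4 - \widetilde X_{13}^2 \widetilde X_{23}^2 + 2 \widetilde X_{23}^2$.

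The only genuinely delicate point is a cancellation among the $\mathop{\mathrm{arctanh}}$ contributions: both $d_{\widetilde X}$ acting on the argument $g$ and on the prefactor $R^{-1/2}$ produce $\mathop{\mathrm{arctanh}}(g)$ terms, and their combined coefficient $-2/\sqrt R+(2R-4\widetilde X_{23}^2)/R^{3/2}$ must collapse to $-4\widetilde X_{23}^2/R^{3/2}$, which after multiplication by the prefactor reproduces the $8\widetilde X_{23}^2$ numerator of the stated transcendental term. This collapse, and the appearance of the clean factor $(\widetilde X_{12}^2-1)(\widetilde X_{13}^2-1)$ in the denominator of the algebraic term, are precisely what the factorisation $Q^2-R=4(\widetilde X_{12}^2-1)(\widetilde X_{13}^2-1)$ guarantees; once that identity is checked, the remaining manipulations are routine polynomial bookkeeping.
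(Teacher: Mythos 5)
Your proposal is correct and follows essentially the same route as the paper, whose entire proof of this corollary is the one-line instruction to apply (\ref{eq:LogarithmicDerivativeLeadingTermsRelation}), i.e.\ $\Omega_+=(-2-d_{\widetilde X})\Omega_-$, to the closed form of $\Omega_-$ from Theorem \ref{th:Arctangent}. Your supporting identities all check out — with $a=\widetilde X_{12}^2$, $b=\widetilde X_{13}^2$, $c=\widetilde X_{23}^2$ one indeed has $Q^2-R=4(a-1)(b-1)$, $d_{\widetilde X}R=4R-8c$, $d_{\widetilde X}Q=2Q+4$, and $cQ+R=a^2-2ab-ac+b^2-bc+2c$ — so your computation simply makes explicit the chain-rule bookkeeping the paper leaves to the reader.
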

\begin{proof}
Use (\ref{eq:LogarithmicDerivativeLeadingTermsRelation}).
\end{proof}
Also note this this is a unique solution to (\ref{eq:DifferentialEquationOmegaPlus}) with initial condition (\ref{eq:InitialConditionsOmegaPlus}).

\section{Multiple Schur sums.}

In this section, we conjecture a generalization of the genus two Cauchy sum where the summand is a product of multiple genus two Schur polynomials.

\begin{definition}
Multiple genus two Cauchy sum is defined by following formal power series in $\lambda$:
\begin{align*}
\nonumber & \mathcal{C}_-(x^{(1)}_{12}, x^{(1)}_{13}, x^{(1)}_{23}, \ldots, x^{(m)}_{12}, x^{(m)}_{13}, x^{(m)}_{23}| \kappa, \lambda) := \sum_{(j_1, j_2, j_3) \in \mathbf{J}} \prod\limits_{i = 1}^{m} \phi_{j_1, j_2, j_3}(x^{(i)}_{12}, x^{(i)}_{13}, x^{(i)}_{23}) \cdot (\kappa^{j_1 + 1} - \kappa^{-j_1 - 1}) \lambda^{j_2 + j_3} \\ & \in \quad \mathbb{C}[\kappa^{\pm 1}][\big(x_{12}^{(1)}\big)^{\pm 1}, \big(x_{13}^{(1)}\big)^{\pm 1}, \big(x_{23}^{(1)}\big)^{\pm 1}, \ldots, \big(x_{12}^{(m)}\big)^{\pm 1}, \big(x_{13}^{(m)}\big)^{\pm 1}, \big(x_{23}^{(m)}\big)^{\pm 1}] \llbracket \lambda \rrbracket. 
\end{align*}
\end{definition}

Similarly to the single Schur sum, we define the leading terms

\begin{definition}
We refer to the following formal power series in $\widetilde X_{12}^{(1)},\widetilde X_{13}^{(1)},\widetilde X_{23}^{(1)}, \ldots, \widetilde X_{12}^{(m)},\widetilde X_{13}^{(m)},\widetilde X_{23}^{(m)}$ as a leading term at $\widetilde \lambda = \widetilde \kappa$ of the multiple genus two Cauchy sum:
\begin{align*}
\nonumber & \Omega_-\big(\widetilde X_{12}^{(1)},\widetilde X_{13}^{(1)},\widetilde X_{23}^{(1)}, \ldots, \widetilde X_{12}^{(m)},\widetilde X_{13}^{(m)},\widetilde X_{23}^{(m)} \big|\widetilde\kappa\big):=&\mathop{\mathrm{Res}}_{\widetilde\lambda=\widetilde\kappa}\,(1-\widetilde\lambda/\widetilde\kappa)\,\mathcal{C}_-(x^{(1)}_{12}, x^{(1)}_{13}, x^{(1)}_{23}, \ldots, x^{(m)}_{12}, x^{(m)}_{13}, x^{(m)}_{23}| \kappa, \lambda) \\ 
& \in\quad\mathbb C\big(\widetilde\kappa\big)\big\llbracket\widetilde X_{12},\widetilde X_{13},\widetilde X_{23}\big\rrbracket
\end{align*}
\end{definition}

\begin{conjecture}
Leading term at $\widetilde \lambda = \widetilde \kappa$ of the multiple genus two Cauchy sum is given by the following formula
\begin{align*}
\nonumber & \Omega_-\big(\widetilde X_{12}^{(1)},\widetilde X_{13}^{(1)},\widetilde X_{23}^{(1)}, \ldots, \widetilde X_{12}^{(m)},\widetilde X_{13}^{(m)},\widetilde X_{23}^{(m)} \big|\widetilde\kappa\big) = \sum\limits_{{\vec i}^{(1)}, \ldots, {\vec i}^{(m)} = 0}^{\infty} \ \prod\limits_{a = 1}^{m} \widetilde X_{12}^{i^{(a)}_{12}} \widetilde X_{13}^{i^{(a)}_{13}} \widetilde X_{23}^{i^{(a)}_{23}} \\ \nonumber & (-1)^{i^{(1)}_{23}+\ldots+i^{(m)}_{23}} \ 4^{-(i^{(1)}_{12}+i^{(1)}_{13}+i^{(1)}_{23}+\ldots+i^{(m)}_{12}+i^{(m)}_{13}+i^{(m)}_{23})} \\ 
\nonumber &  \dfrac{\Gamma\left(2i^{(1)}_{12}+2i^{(1)}_{13}+2i^{(1)}_{23}+\ldots+2i^{(m)}_{12}+2i^{(m)}_{13}+2i^{(m)}_{23}+2\right)
\Gamma\left(\frac{3}{2}\right)^{m}
}{\Gamma\left(i^{(1)}_{12}+i^{(1)}_{13}+2i^{(1)}_{23}+\ldots+i^{(m)}_{12}+i^{(m)}_{13}+2i^{(m)}_{23}+2\right)\Gamma\left(i^{(1)}_{12}+i^{(1)}_{13}+i^{(1)}_{23}+\frac{3}{2}\right) \ldots
\Gamma\left(i^{(m)}_{12}+i^{(m)}_{13}+i^{(m)}_{23}+\frac{3}{2}\right)} \\
& \dfrac{\Gamma\left(i^{(1)}_{12}+i^{(1)}_{23}+\ldots+i^{(m)}_{12}+i^{(m)}_{23}+1\right)\Gamma\left(i^{(1)}_{13}+i^{(1)}_{23}+\ldots+i^{(m)}_{13}+i^{(m)}_{23}+1\right)}
{\Gamma\left(i^{(1)}_{12}+1\right)\Gamma\left(i^{(1)}_{13}+1\right)\Gamma\left(i^{(1)}_{23}+1\right)\ldots\Gamma\left(i^{(m)}_{12}+1\right)\Gamma\left(i^{(m)}_{13}+1\right)\Gamma\left(i^{(m)}_{23}+1\right)}
\end{align*}
Note that this is a $A$-hypergeometric function in the sence of Gelfand, Kapranov, Zelevinsky \cite{GelfandKapranovZelevinsky'1994}.
\label{conj:MainConjecture}
\end{conjecture}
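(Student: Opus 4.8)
\noindent
The plan is to mirror the strategy that settled the single--sum case (Theorem \ref{th:Arctangent}): produce a holonomic system of linear PDEs annihilating the multiple leading term $\Omega_-$, recognize that system as the Gelfand--Kapranov--Zelevinsky system attached to the Gamma--ratio in the conjecture, and then isolate the displayed series by boundary data and a uniqueness argument.

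First I would derive the defining equations. Each factor $\phi_{j_1,j_2,j_3}(x^{(a)})$ is a common eigenfunction of the operators $\hat H_1^{(a)},\hat H_2^{(a)},\hat H_3^{(a)}$ of (\ref{eq:Hamiltonians}) acting in the $a$-th set of variables (Proposition \ref{prop:GenusTwoSchurHiEigenfunctions}), and every factor contributes the \emph{same}, copy-independent eigenvalue $(j_k+1)^2$. Hence $\hat H_k^{(a)}\mathcal C_-=\hat H_k^{(b)}\mathcal C_-$ for all $a,b,k$, and for $k=1$ this common value is $(\kappa\partial_\kappa)^2\mathcal C_-$. Repeating the residue-extraction of Proposition \ref{prop:DifferentialEquationsMain} then yields $m$ second-order equations $(\hat{\mathcal H}_1^{(-2),(a)}-d_{\widetilde X}^2-5d_{\widetilde X}-6)\Omega_-=0$, now with $d_{\widetilde X}$ the \emph{total} logarithmic derivative over all $3m$ variables and $\hat{\mathcal H}_1^{(-2),(a)}$ the operator (\ref{eq:H1m2CalDefinition}) in the $a$-th block, together with the difference relations $(\hat{\mathcal H}_k^{(-2),(a)}-\hat{\mathcal H}_k^{(-2),(b)})\Omega_-=0$ for $k=2,3$ coming from the shared eigenvalues. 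Because every $\phi$ is invariant under $x^{(a)}_{ij}\leftrightarrow (x^{(a)}_{ij})^{-1}$, the series $\Omega_-$ is even in each $\widetilde X^{(a)}_{ij}$, so I would pass to the squared coordinates $u_a=(\widetilde X^{(a)}_{12})^2,\ v_a=(\widetilde X^{(a)}_{13})^2,\ w_a=(\widetilde X^{(a)}_{23})^2$, in which these operators become polynomial-coefficient, e.g. $\hat{\mathcal H}_1^{(-2),(a)}=4u_a\partial_{u_a}^2+4v_a\partial_{v_a}^2+4(u_a+v_a-w_a)\partial_{u_a}\partial_{v_a}+6\partial_{u_a}+6\partial_{v_a}$ and $d_{\widetilde X}=2\sum_a(u_a\partial_{u_a}+v_a\partial_{v_a}+w_a\partial_{w_a})$. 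This is the coordinate in which the conjectured series, whose exponents range over $\mathbb Z_{\geq0}^{3m}$, naturally lives, and it already reconciles the two sides at $m=1$: there the displayed series is exactly the Taylor expansion of (\ref{eq:LeadingTermOmegaMinus}) re-expressed in the squared variables, up to the overall $\widetilde\kappa$-factor suppressed in the conjecture.

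Next I would read off the $A$-hypergeometric data: seeing how the conjectured coefficient changes when a single exponent is raised by one produces rational functions of the exponents whose numerators and denominators are precisely the linear forms in the Gamma-arguments, and these fix the vector configuration $A$, the parameter $\beta$, and the Euler and toric (box) operators of the GKZ system \cite{GelfandKapranovZelevinsky'1994}. The central task is to prove that the conjectured series solves the PDEs of the previous step, equivalently that each operator above lies in the left ideal generated by the GKZ operators (or, conversely, that the PDE system generates the GKZ ideal). To single out the right solution I would use two specializations: the $m=1$ reduction to Theorem \ref{th:Arctangent}, which anchors the overall normalization and the omitted $\widetilde\kappa$-prefactor, and the locus $\{\,\widetilde X^{(a)}_{23}=0\ \forall a\,\}$, on which $\Omega_-$ collapses to the leading term of a sum of products of the specialized polynomials $\phi_{j_1,j_2,j_3}(x^{(a)}_{12},x^{(a)}_{13},1)$ governed by Proposition \ref{prop:SpecializedSum} and Lemma \ref{lemm:SpecializationCab}; together with the vanishing of the normal $w_a$-derivatives (forced by evenness) these provide Cauchy data, and holonomicity plus a power-series recursion in the characteristic directions $w_a$, exactly as in the single-sum proof, would then give uniqueness.

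The hard part will be two-fold. First, one must verify that the equations assembled above — the $m$ first-type operators, the $k=2,3$ difference operators, and the consequences of the $S_3$-symmetry (\ref{eq:GenusTwoSchurS3Symmetry}) — actually cut out a holonomic system of the rank matching the GKZ solution space, and then solve the attendant connection problem, i.e. confirm that $\Omega_-$ is the particular displayed solution rather than another element of the (generally multi-dimensional) space of GKZ solutions. Second, the boundary locus $\{w_a=0\}$ does not trivialize the problem for $m\geq 2$: the restricted sum is itself a multi-variable Cauchy identity for products of specialized Schur polynomials, essentially a lower-dimensional instance of the conjecture, so the argument must be organized inductively so that this restricted identity is established before the full one. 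Bridging the gap between the eigenfunction PDEs, which are transparent, and the GKZ toric relations, which encode the combinatorial content of the Gamma-ratio and are not implied in any obvious way, is where I expect the main difficulty to concentrate.
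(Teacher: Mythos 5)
The statement you are trying to prove is left open in the paper: it is stated as Conjecture \ref{conj:MainConjecture} with no proof, so there is no argument of the authors to compare yours against, and the bar your text must clear is that of a complete self-contained proof. It does not clear it. What you have written is a research program, and you say so yourself. Its sound ingredients do faithfully parallel the single-sum machinery: the termwise identities $\hat H_k^{(a)}\mathcal C_-=\hat H_k^{(b)}\mathcal C_-$ and $\hat H_1^{(a)}\mathcal C_-=(\kappa\partial_\kappa)^2\mathcal C_-$ are correct consequences of Proposition \ref{prop:GenusTwoSchurHiEigenfunctions}, the residue extraction should go through as in Proposition \ref{prop:DifferentialEquationsMain} (with the pole-order bookkeeping of Proposition \ref{prop:PoleAtKappaEqualsLambda} redone for products, where the degree bounds (\ref{eq:DegreeBoundscm12m13m23}) add across factors but are again compensated by the factor $(1-\widetilde\lambda/\widetilde\kappa)^{\textrm{total degree}}$ from the change of variables), and your observation that the conjectured series must be read in the squared variables $u_a,v_a,w_a$ — and that with this reading it matches the Taylor expansion of (\ref{eq:LeadingTermOmegaMinus}) at $m=1$ up to the suppressed $\widetilde\kappa$-prefactor — is a genuine and correct consistency check.

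The gaps, however, are exactly where the content of the conjecture lives, and they are not merely technical. First, your uniqueness scheme is circular for $m\geq2$: the Cauchy data on the locus $\{w_a=0\ \forall a\}$ requires a closed form for $\sum_{j_2+j_3=J}\prod_a\phi_{j_1,j_2,j_3}\bigl(x^{(a)}_{12},x^{(a)}_{13},1\bigr)$, but Proposition \ref{prop:SpecializedSum} evaluates this sum only for a single factor, and its proof does not extend: the telescoping there comes from summing one Pieri relation (\ref{eq:PieriForSchur23}) linearly over the line $j_2+j_3=J$, whereas for a product the relation would have to be applied in one copy of the variables at a time and the cross terms do not close into a recursion in $J$. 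Likewise Lemma \ref{lemm:SpecializationCab} rests on an eigenfunction-plus-leading-term uniqueness for $\hat H_1\big|_{x_{23}=1}$ in one set of variables and gives no handle on products. So the restricted identity is, as you concede, a lower-dimensional instance of the very conjecture, and your induction has no established base. Second, the bridge from the eigenfunction PDEs to the GKZ structure is asserted, not constructed: you never exhibit the toric (box) operators as members of the ideal generated by your $m$ Euler-type equations, the $k=2,3$ difference relations, and the symmetry constraints, nor do you verify holonomicity or compute the rank, so even granting the system you cannot yet run a connection-problem argument to single out the displayed solution. Note also that the single-sum uniqueness leaned on Proposition \ref{prop:KernelHm2}, whose multi-copy analogue (triviality of the common kernel of the leading symbols on polynomials in $3m$ variables) you would have to formulate and prove afresh. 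Until the boundary identity and the PDE-to-GKZ bridge are supplied, the statement remains, as in the paper, a conjecture.
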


\appendix

\section{Proof of Proposition \ref{prop:KernelHm2}.}
\label{sec:ProofPropositionKernelHm2}

\begin{definition}
Let $m,k,l\in\mathbb Z_{\geq0}$ be nonnegative integers satisfying $k+l\leq m$, denote
\begin{equation}
\mathfrak P_{m,k,l}:=X_{23}^m\mathcal P_k\left(\frac{X_{12}-X_{13}}{X_{23}}\right)\mathcal P_l\left(\frac{X_{12}+X_{13}}{X_{23}}\right)\qquad\in\quad \mathbb C[X_{12},X_{13},X_{23}],
\label{eq:PmklDefinition}
\end{equation}
where $\mathcal P_k$ and $\mathcal P_l$ stand for Legendre polynomials of degrees $k$ and $l$ respectively.
\end{definition}

\begin{lemma}
The following provides a basis on $\left(\mathbb C[X_{12},X_{13},X_{23}]\right)_m$, the space of homogeneous polynomials of degree $m$ in variables $X_{12},X_{13},X_{23}$:
\begin{equation*}
\mathfrak B=\left\{\mathfrak P_{m,k,l}\;|\;m,k,l\in\mathbb Z_{\geq0},\; k+l\leq m\right\}.
\end{equation*}
\end{lemma}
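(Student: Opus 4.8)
The plan is to reduce the statement to the elementary fact that the degree-$m$ monomials form a basis, by means of a linear change of variables that trivializes the Legendre-polynomial construction. First I would record the two matching dimension counts. The space $\left(\mathbb C[X_{12},X_{13},X_{23}]\right)_m$ has dimension $\binom{m+2}{2}$, and the number of pairs $(k,l)$ with $k,l\in\mathbb Z_{\geq0}$ and $k+l\leq m$ is $\sum_{s=0}^{m}(s+1)=\binom{m+2}{2}$ as well. Since the cardinalities agree, it suffices to establish either linear independence or spanning, and I will obtain both at once from an isomorphism.

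Next I would pass to the linear coordinates $p=X_{12}-X_{13}$, $q=X_{12}+X_{13}$, $r=X_{23}$, so that $\mathbb C[X_{12},X_{13},X_{23}]=\mathbb C[p,q,r]$ with the grading preserved. Writing $u=p/r$ and $v=q/r$, I would introduce the linear map $\Psi$ sending a polynomial $P(u,v)$ of degree at most $m$ to $r^m P(u,v)$. The key computation is that $\Psi(u^a v^b)=p^a q^b r^{m-a-b}$ whenever $a+b\leq m$, so $\Psi$ carries the monomial basis $\{u^a v^b:a+b\leq m\}$ bijectively onto the monomial basis $\{p^a q^b r^c:a+b+c=m\}$ of $\left(\mathbb C[p,q,r]\right)_m$. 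Hence $\Psi$ is a linear isomorphism onto the degree-$m$ forms; in particular every $\mathfrak P_{m,k,l}=\Psi\!\left(\mathcal P_k(u)\mathcal P_l(v)\right)$ is a genuine polynomial, since $k+l\leq m$ ensures the exponent $m-k-l$ stays nonnegative.

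Finally, because the Legendre polynomial $\mathcal P_k$ has degree exactly $k$, the family $\{\mathcal P_k(u)\mathcal P_l(v)\}_{k,l\geq0}$ is the tensor-product basis of $\mathbb C[u,v]=\mathbb C[u]\otimes\mathbb C[v]$; restricting to $k+l\leq m$ yields a basis of the subspace of polynomials of degree at most $m$, because each product $\mathcal P_k(u)\mathcal P_l(v)$ has total degree exactly $k+l$ and the count again matches $\binom{m+2}{2}$. Transporting this basis through $\Psi$ sends $\mathcal P_k(u)\mathcal P_l(v)$ to $\mathfrak P_{m,k,l}$, so $\mathfrak B$ is a basis of $\left(\mathbb C[X_{12},X_{13},X_{23}]\right)_m$, as claimed. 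The only point requiring care is checking that $\Psi$ is well defined and bijective, i.e. that the apparent negative powers of $r$ inside $u$ and $v$ cancel; this is exactly guaranteed by the constraint $k+l\leq m$ and is made transparent by the $(p,q,r)$ coordinates, so no genuinely hard step remains once the change of variables is in place.
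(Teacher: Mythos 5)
Your proof is correct and follows essentially the same route as the paper: both pass to the coordinates $X_{12}-X_{13}$, $X_{12}+X_{13}$, $X_{23}$ and reduce the claim to the monomial basis of the degree-$m$ component, using that $\mathcal P_k$ has degree exactly $k$. The paper compresses what you phrase as the homogenization isomorphism $\Psi$ plus a dimension count into the single observation that $\mathfrak B$ arises from the monomial family $X_{23}^{m-k-l}(X_{12}-X_{13})^k(X_{12}+X_{13})^l$ by a triangular change of basis, so the difference is purely one of packaging.
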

\begin{proof}
First, note that $\left(\mathbb C[X_{12},X_{13},X_{23}]\right)_m$ has a basis given by $X_{23}^{m-k-l}(X_{12}-X_{13})^k(X_{12}+X_{13})^l$ with $k+l\leq m$. Next, observe that $\mathfrak B$ is obtained by triangular change of basis from the latter.
\end{proof}

\begin{lemma}
Consider restriction of the operator $\hat H_1^{(-2)}$ on a subspace of homogeneous polynomials of degree $m\in\mathbb Z_{\geq 0}$, it has the following kernel
\begin{equation}
\ker\hat H_1^{(-2)}\big|_{\left(\mathbb C[X_{12},X_{13},X_{23}]\right)_m}=\mathrm{Span}\left\{ \mathfrak P_{m,l,l}\;\left|\;0\leq l\leq\left\lfloor\frac m2\right\rfloor\right.\right\}.
\label{eq:KerH1m2}
\end{equation}
\end{lemma}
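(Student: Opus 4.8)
The plan is to diagonalize $\hat H_1^{(-2)}$ by passing to the coordinates implicit in the definition of the basis $\mathfrak B$. Concretely, I would introduce
\[
u=\frac{X_{12}-X_{13}}{X_{23}},\qquad v=\frac{X_{12}+X_{13}}{X_{23}},\qquad w=X_{23},
\]
so that $\mathfrak P_{m,k,l}=w^m\,\mathcal P_k(u)\,\mathcal P_l(v)$ factorizes cleanly. The key structural observation is that $\hat H_1^{(-2)}$ involves only $\partial/\partial X_{12}$ and $\partial/\partial X_{13}$ (there is no $\partial/\partial X_{23}$), so after the chain rule it involves only $\partial_u$ and $\partial_v$, while $w$ enters purely as an overall multiplicative factor that commutes through.

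First I would carry out the chain-rule computation. Since $\partial/\partial X_{12}=w^{-1}(\partial_u+\partial_v)$ and $\partial/\partial X_{13}=w^{-1}(\partial_v-\partial_u)$, the two second-order squares combine to $2w^{-2}(\partial_u^2+\partial_v^2)$, the mixed term $\tfrac{X_{12}^2+X_{13}^2-X_{23}^2}{X_{12}X_{13}}\partial_{X_{12}}\partial_{X_{13}}$ rewrites as a multiple of $\partial_v^2-\partial_u^2$, and the first-order terms assemble into a multiple of $-u\partial_u+v\partial_v$. Collecting coefficients, I expect the separated form
\[
\hat H_1^{(-2)}=\frac{4}{w^2\,(v^2-u^2)}\,\bigl(L_u-L_v\bigr),\qquad L_x:=(1-x^2)\frac{\partial^2}{\partial x^2}-2x\frac{\partial}{\partial x},
\]
where $L_u,L_v$ are Legendre operators. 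The Legendre equation $L_x\mathcal P_n(x)=-n(n+1)\mathcal P_n(x)$ then yields, for every admissible pair $k,l$,
\[
\hat H_1^{(-2)}\mathfrak P_{m,k,l}=\frac{4\,[\,l(l+1)-k(k+1)\,]}{v^2-u^2}\,w^{m-2}\,\mathcal P_k(u)\,\mathcal P_l(v).
\]

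From this eigenvalue-type identity the lemma follows by linear algebra. Expanding an arbitrary degree-$m$ homogeneous polynomial as $p=\sum_{k+l\leq m}a_{k,l}\mathfrak P_{m,k,l}$ and applying the formula, one may strip off the common nonzero factor $\tfrac{4w^{m-2}}{v^2-u^2}=\tfrac{X_{23}^m}{X_{12}X_{13}}$, so $\hat H_1^{(-2)}p=0$ is equivalent to the vanishing of $\sum_{k+l\leq m}a_{k,l}[l(l+1)-k(k+1)]\mathcal P_k(u)\mathcal P_l(v)$. Since the products $\mathcal P_k(u)\mathcal P_l(v)$ are linearly independent and $n\mapsto n(n+1)$ is injective on $\mathbb Z_{\geq0}$, this forces $a_{k,l}=0$ whenever $k\neq l$, leaving exactly the span of $\{\mathfrak P_{m,l,l}\}$ with $k+l=2l\leq m$, i.e. $0\leq l\leq\lfloor m/2\rfloor$, as claimed.

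The main obstacle is the bookkeeping in the chain-rule step: one must check that the rational coefficient and all cross terms conspire to cancel every dependence on $w$ except the overall $w^{-2}$, and that the surviving $u,v$-operator is precisely $L_u-L_v$ up to the scalar prefactor $\tfrac{4}{v^2-u^2}$. This is a finite but delicate calculation; once the Legendre structure is exposed, the remainder is routine.
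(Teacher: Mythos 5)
Your proposal is correct and takes essentially the same route as the paper: the same substitution $U=(X_{12}-X_{13})/X_{23}$, $V=(X_{12}+X_{13})/X_{23}$ reduces $X_{12}X_{13}\hat H_1^{(-2)}$ to the difference of Legendre operators $\hat{\mathcal D}_U-\hat{\mathcal D}_V$, which is diagonal on the basis $\mathfrak P_{m,k,l}$ with eigenvalues $\pm\bigl(k(k+1)-l(l+1)\bigr)$ (your sign convention differs only because you keep the prefactor $4/(w^2(v^2-u^2))=1/(X_{12}X_{13})$ explicit), and injectivity of $n\mapsto n(n+1)$ on $\mathbb Z_{\geq0}$ then forces $k=l$ exactly as in the paper. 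Your chain-rule computation checks out, so there is no gap.
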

\begin{proof}
To this end we make the following change of variables
\begin{equation}
(X_{12},X_{23},X_{13})\leftrightarrow (U,V,X_{23}),\qquad\textrm{where}\qquad U=\frac{X_{12}-X_{13}}{X_{23}},\quad V=\frac{X_{12}+X_{13}}{X_{23}}.
\label{eq:UVXChangeOfVariables}
\end{equation}
In terms of the new variables operator (\ref{eq:H1m2}) multiplied by $X_{12}X_{13}$ on the left acquires especially simple form
\begin{equation}
X_{12}X_{13}\hat H_1^{(-2)}=\left(\left(1-U^2\right)\frac{\partial^2}{\partial U^2}-2U\frac{\partial}{\partial U}\right)-\left(\left(1-V^2\right)\frac{\partial^2}{\partial V^2}-2V\frac{\partial}{\partial V}\right)=\hat{\mathcal D}_U-\hat{\mathcal D}_V,
\label{eq:DifferenceOfLegendreOperators}
\end{equation}
where $\hat{\mathcal D}_U$ and $\hat{\mathcal D}_V$ are nothing but Legendre differential operators in variables $U,V$. Note that the action of the right hand side of (\ref{eq:DifferenceOfLegendreOperators}) on $\mathbb C[U,V]$ is diagonal in the basis of products of Legendre polynomials
\begin{equation*}
\left\{\mathcal P_l(U)\mathcal P_k(V)\;\big|\; k,l\in\mathbb Z_{\geq0}\right\}
\end{equation*}
with eigenvalues $\lambda_{k,l}=k(k+1)-l(l+1)$.

As a corollary, the action of $X_{12}X_{13}\hat H_1^{(-2)}$ on the homogeneous component $\left(\mathbb C[X_{12},X_{13},X_{23}]\right)_m$ of degree $m$ is diagonal in basis $\mathfrak B$
with the same eigenvalues $\lambda_{k,l}=k(k+1)-l(l+1)$. Taking the span of all the eigenvectors corresponding to zero eigenvalue we get precisely (\ref{eq:KerH1m2}).
\end{proof}

\begin{lemma}
Let $m,l\in\mathbb Z_{\geq0}$ be a pair of nonnegative integers satisfying $2l\leq m$. The action of the remaining two operators (\ref{eq:H2m2}) and (\ref{eq:H3m2}) on polynomial $\mathfrak P_{m,l,l}$ is given by
\begin{subequations}
\begin{align}
X_{12}X_{23}\hat H_2^{(-2)}\mathfrak P_{m,l,l}=&(m+1)\left((m+2l+2)X_{12}X_{23}^{-1}\mathfrak P_{m,l,l}-(l+1)\mathfrak P_{m,l+1,l}-(l+1)\mathfrak P_{m,l,l+1}\right),\label{eq:H2m2PmllAction}\\[0.3em]
X_{13}X_{23}\hat H_3^{(-2)}\mathfrak P_{m,l,l}=&(m+1)\left((m+2l+2)X_{13}X_{23}^{-1}\mathfrak P_{m,l,l}+(l+1)\mathfrak P_{m,l+1,l}-(l+1)\mathfrak P_{m,l,l+1}\right).
\label{eq:H3m2PmllAction}
\end{align}
\end{subequations}
\end{lemma}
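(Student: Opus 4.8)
The plan is to pass to the coordinates $(U,V,W)$ of (\ref{eq:UVXChangeOfVariables}), in which $\mathfrak P_{m,k,l}$ from (\ref{eq:PmklDefinition}) becomes a pure product of Legendre polynomials, and then reduce everything to two standard Legendre identities. With $U=(X_{12}-X_{13})/X_{23}$, $V=(X_{12}+X_{13})/X_{23}$, $W=X_{23}$ one has $X_{12}=\tfrac{U+V}{2}W$, $X_{13}=\tfrac{V-U}{2}W$, $X_{23}=W$, so that
\[
\mathfrak P_{m,l,l}=W^m\mathcal P_l(U)\mathcal P_l(V),\qquad X_{12}X_{23}^{-1}\mathfrak P_{m,l,l}=\tfrac{U+V}{2}W^m\mathcal P_l(U)\mathcal P_l(V),
\]
\[
\mathfrak P_{m,l+1,l}=W^m\mathcal P_{l+1}(U)\mathcal P_l(V),\qquad \mathfrak P_{m,l,l+1}=W^m\mathcal P_l(U)\mathcal P_{l+1}(V).
\]
The entire identity (\ref{eq:H2m2PmllAction}) thus lives in the $(U,V,W)$ frame, and it suffices to compute its left-hand side there.

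Next I would rewrite the first-order vector fields by the chain rule, $\partial_{X_{12}}=\tfrac1W(\partial_U+\partial_V)$, $\partial_{X_{13}}=\tfrac1W(-\partial_U+\partial_V)$, $\partial_{X_{23}}=\partial_W-\tfrac UW\partial_U-\tfrac VW\partial_V$, and substitute them into $X_{12}X_{23}\hat H_2^{(-2)}$, with $\hat H_2^{(-2)}$ as in (\ref{eq:H2m2}). A useful simplification is that the cross-term data collapse, $X_{12}^2+X_{23}^2-X_{13}^2=W^2(1+UV)$ and $X_{12}X_{23}=\tfrac{U+V}{2}W^2$, so the factors of $W$ in the cross coefficient cancel and every resulting term carries exactly the power of $W$ needed so that, applied to $W^m\mathcal P_l(U)\mathcal P_l(V)$, it returns $W^m$ times a function of $U,V$ alone. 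Collecting that function by derivative type $\mathcal P_l\mathcal P_l$, $\mathcal P_l'\mathcal P_l$, $\mathcal P_l\mathcal P_l'$, $\mathcal P_l''\mathcal P_l$, $\mathcal P_l\mathcal P_l''$, $\mathcal P_l'\mathcal P_l'$, the $W$-derivatives supply the polynomial-in-$m$ coefficients; the coefficient of $\mathcal P_l'(U)\mathcal P_l'(V)$ vanishes identically, while the coefficients of $\mathcal P_l''(U)\mathcal P_l(V)$ and $\mathcal P_l(U)\mathcal P_l''(V)$ factor as $\tfrac{(U^2-1)(U-V)}{2}$ and $-\tfrac{(V^2-1)(U-V)}{2}$, inviting the Legendre ODE $(U^2-1)\mathcal P_l''(U)=l(l+1)\mathcal P_l(U)-2U\mathcal P_l'(U)$. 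After this substitution the $l(l+1)$ contributions to the $\mathcal P_l\mathcal P_l$-coefficient cancel between the $U$ and $V$ reductions, and I expect to land on
\[
X_{12}X_{23}\hat H_2^{(-2)}\mathfrak P_{m,l,l}=W^m\Big(\tfrac{m(m+1)}{2}(U+V)\mathcal P_l(U)\mathcal P_l(V)+(m+1)(1-U^2)\mathcal P_l'(U)\mathcal P_l(V)+(m+1)(1-V^2)\mathcal P_l(U)\mathcal P_l'(V)\Big).
\]
This second paragraph is the main obstacle: transporting $\partial_{X_{23}}^2$ and the mixed derivative $\partial_{X_{12}}\partial_{X_{23}}$ into the $(U,V,W)$ frame generates a thicket of $\partial_W$-mixed terms, and the real content is the bookkeeping that makes all powers of $W$ match, kills the $\mathcal P_l'\mathcal P_l'$-coefficient, and cancels the $l(l+1)$-terms.

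To finish (\ref{eq:H2m2PmllAction}) I would apply the Legendre contiguity relation $(1-x^2)\mathcal P_l'(x)=(l+1)\big(x\mathcal P_l(x)-\mathcal P_{l+1}(x)\big)$ in both $U$ and $V$. This turns $(1-U^2)\mathcal P_l'(U)$ and $(1-V^2)\mathcal P_l'(V)$ into combinations of $U\mathcal P_l,\mathcal P_{l+1}$ and $V\mathcal P_l,\mathcal P_{l+1}$; the new $U\mathcal P_l\mathcal P_l$ and $V\mathcal P_l\mathcal P_l$ pieces combine with $\tfrac{m(m+1)}2(U+V)\mathcal P_l\mathcal P_l$ to give exactly $(m+1)(m+2l+2)\tfrac{U+V}{2}\mathcal P_l\mathcal P_l$, while the $\mathcal P_{l+1}$ pieces yield $-(m+1)(l+1)\mathcal P_{l+1}(U)\mathcal P_l(V)-(m+1)(l+1)\mathcal P_l(U)\mathcal P_{l+1}(V)$; translating back through the dictionary reproduces (\ref{eq:H2m2PmllAction}).

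For (\ref{eq:H3m2PmllAction}) I would not repeat the computation but invoke symmetry. The transposition $\rho\colon X_{12}\leftrightarrow X_{13}$ (fixing $X_{23}$) conjugates $\hat H_2^{(-2)}$ into $\hat H_3^{(-2)}$ and acts on the coordinates by $U\mapsto-U$, $V\mapsto V$, $W\mapsto W$, hence $\mathcal P_l(U)\mapsto(-1)^l\mathcal P_l(U)$ and $\mathfrak P_{m,l,l}\mapsto(-1)^l\mathfrak P_{m,l,l}$. Applying $\rho$ to (\ref{eq:H2m2PmllAction}) and cancelling the common factor $(-1)^l$, the only term whose parity differs is $\mathfrak P_{m,l+1,l}$, since $\mathcal P_{l+1}(U)\mapsto(-1)^{l+1}\mathcal P_{l+1}(U)$; its sign therefore flips from $-(l+1)$ to $+(l+1)$, which is precisely the discrepancy between (\ref{eq:H2m2PmllAction}) and (\ref{eq:H3m2PmllAction}), completing the proof.
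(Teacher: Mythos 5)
Your proposal is correct and follows essentially the same route as the paper's proof: the same change of variables $(U,V)=\bigl(\tfrac{X_{12}-X_{13}}{X_{23}},\tfrac{X_{12}+X_{13}}{X_{23}}\bigr)$, elimination of $\mathcal P_l''$ via the Legendre equation to reach the intermediate expression $(m+1)\bigl(\tfrac m2(U+V)\mathcal P_l\mathcal P_l+(1-U^2)\mathcal P_l'\mathcal P_l+(1-V^2)\mathcal P_l\mathcal P_l'\bigr)$, the contiguity relation $(1-x^2)\mathcal P_l'(x)=(l+1)\bigl(x\mathcal P_l(x)-\mathcal P_{l+1}(x)\bigr)$ to produce the $\mathfrak P_{m,l+1,l}$ and $\mathfrak P_{m,l,l+1}$ terms, and conjugation by $X_{12}\leftrightarrow X_{13}$ for $\hat H_3^{(-2)}$. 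Your parity bookkeeping $U\mapsto -U$, $\mathcal P_{l+1}(U)\mapsto(-1)^{l+1}\mathcal P_{l+1}(U)$ correctly accounts for the single sign flip, making explicit a detail the paper leaves to the reader.
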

\begin{proof}
Applying differential operator (\ref{eq:H2m2}) to the right hand side of (\ref{eq:PmklDefinition}) and changing the variables as in (\ref{eq:UVXChangeOfVariables}) we get
\begin{equation}
\begin{aligned}
X_{12}X_{23}^{1-m}\hat H_2^{(-2)}\mathfrak P_{m,l,l}=&\frac12m(m+1)(U+V)\mathcal P_l(U)\mathcal P_l(V)\\
&\quad+(1+m-UV-mV^2)\mathcal P_l(U)\mathcal P_l'(V)+(1+m-mU^2-UV)\mathcal P_l'(U)\mathcal P_l(V)\\
&\quad+\frac12(U-V)(1-V^2)\mathcal P_l(U)\mathcal P_l''(V)-\frac12(U-V)(1-U^2)\mathcal P_l''(U)\mathcal P_l(V)\\
=&(m+1)\left(\frac12m(U+V)\mathcal P_l(U)\mathcal P_l(V)+(1-V^2)\mathcal P_l(U)\mathcal P_l'(V)+(1-U^2)\mathcal P_l'(U)\mathcal P_l(V)\right).
\label{eq:H2m2PmllActionStep1}
\end{aligned}
\end{equation}
Here in the second equality we have used Legendre differential equation to eliminate $\mathcal P_l''$. Finally, to eliminate the first derivatives we use the following identity
\begin{equation*}
(1-x^2)\mathcal P_l'(x)+(l+1)\left(\mathcal P_{l+1}(x)-x\mathcal P_l(x)\right)=0\qquad\textrm{for all}\quad l\in\mathbb Z_{\geq0}.
\end{equation*}
The right hand side of (\ref{eq:H2m2PmllActionStep1}) acquires the following form
\begin{equation}
\begin{aligned}
X_{12}X_{23}^{1-m}&\hat H_2^{(-2)}\mathfrak P_{m,l,l}\\[0.5em]
=&(m+1)\left((m+2l+2)\frac{U+V}2\mathcal P_l(U)\mathcal P_l(V)-(l+1)\mathcal P_{l+1}(U)\mathcal P_l(V)-(l+1)\mathcal P_l(U)\mathcal P_{l+1}(V)\right).
\end{aligned}
\label{eq:H2m2ActionPmllUV}
\end{equation}
Changing variables back to $X_{12},X_{13},X_{23}$ we conclude the proof of identity (\ref{eq:H2m2PmllAction}).

Finally, in order to prove the second identity (\ref{eq:H3m2PmllAction}) note that 
\begin{equation*}
\hat H_3^{(-2)}=\sigma_{(X_{12},X_{13})}\circ \hat H_2^{(-2)}\circ\sigma_{(X_{12},X_{13})}
\end{equation*}
where $\sigma_{(X_{12},X_{13})}$ is an order two automorphism of the ring $\mathbb C[X_{12}^{\pm1},X_{13}^{\pm1},X_{23}^{\pm1}]$ of Laurent polynomials which preserves the polynomial subring.
\end{proof}

For the remainder of Appendix \ref{sec:ProofPropositionKernelHm2} we fix lexicographic monomial ordering on Laurent monomials in $X_{12}, X_{13}, X_{23}$. Note that this is a total monomial ordering which respects multiplication, but it is not a well-order. This will be enough for comparing the leading parts of Laurent polynomials involved in our proofs.

\begin{lemma}
Let $m,l\in\mathbb Z_{\geq0}$ be a pair of nonnegative integers satisfying $m\geq 2l$. The action of differential operators $\hat H_2^{(-2)}$ and $\hat H_3^{(-2)}$ on basis element $\mathfrak P_{m,l,l}$ has the following leading term
\begin{subequations}
\begin{equation}
\mathop{\mathrm{LT}}\left(\hat H_2^{(-2)}\mathfrak P_{m,l,l}\right)=\left\{\begin{array}{ll}
\frac{((2l-1)!!)^2}{(l!)^2}(m+1)(m-2l)X_{12}^{2l}X_{23}^{m-2l-2},&m>2l,\\[0.5em]
\frac{((2l+1)!!)^2}{(l!)^2}\frac{2l^2}{4l^2-1}X_{12}^{2l-2},&m=2l>0,\\[0.5em]
\emptyset,&m=l=0,
\end{array}\right.
\label{eq:LeadingTermsH2m2Pmll}
\end{equation}
\begin{equation}
\mathop{\mathrm{LT}}\left(\hat H_3^{(-2)}\mathfrak P_{m,l,l}\right)=
\left\{\begin{array}{ll}
\frac{((2l-1)!!)^2}{(l!)^2}(m+1)(m-2l)X_{12}^{2l}X_{23}^{m-2l-2},&m>2l,\\[0.5em]
-\frac{((2l+1)!!)^2}{(l!)^2}\frac{2l^2}{4l^2-1}X_{12}^{2l-2},&m=2l>0,\\[0.5em]
\emptyset,&m=l=0.
\end{array}\right.
\label{eq:LeadingTermsH3m2Pmll}
\end{equation}
\end{subequations}
\label{lemm:LeadingTermsH2m2H3m2Pmll}
\end{lemma}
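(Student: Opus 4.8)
The plan is to reduce everything to the two explicit action formulas (\ref{eq:H2m2PmllAction}) and (\ref{eq:H3m2PmllAction}) of the preceding lemma, combined with the leading‑coefficient structure of Legendre polynomials. Since $\mathcal P_n$ has top coefficient $(2n-1)!!/n!$, substituting $U=(X_{12}-X_{13})/X_{23}$, $V=(X_{12}+X_{13})/X_{23}$ and keeping the highest power of $X_{12}$ yields
\[
\mathrm{LT}(\mathfrak P_{m,k,l})=\frac{(2k-1)!!\,(2l-1)!!}{k!\,l!}\,X_{12}^{k+l}X_{23}^{m-k-l}.
\]
Throughout I write $a_0^{(l)}=(2l-1)!!/l!$ and $a_1^{(l)}=-\tfrac{l(l-1)}{2(2l-1)}a_0^{(l)}$ for the top and next coefficients of $\mathcal P_l$.

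For the case $m>2l$ I divide (\ref{eq:H2m2PmllAction}) by $X_{12}X_{23}$: the prefactor $X_{12}^{-1}$ pulls all three summands down to the common monomial $X_{12}^{2l}X_{23}^{m-2l-2}$, with coefficients $(m+2l+2)$, $-(2l+1)$, $-(2l+1)$ times $(a_0^{(l)})^2$, summing to $(m-2l)(a_0^{(l)})^2$; a parity check (both $\mathfrak P_{2l,l,l}$ and the operators are even in $X_{12}$ and in $X_{13}$, and every summand contributes only at $X_{13}^0$) confirms this is genuinely lex‑leading, giving the first line of (\ref{eq:LeadingTermsH2m2Pmll}). For $\hat H_3^{(-2)}$ the prefactor in (\ref{eq:H3m2PmllAction}) is $X_{13}^{-1}$ rather than $X_{12}^{-1}$, so the two $X_{12}^{2l+1}$ leading terms of $\mathfrak P_{m,l+1,l}$ and $\mathfrak P_{m,l,l+1}$ no longer drop by a power of $X_{12}$; but they are equal and cancel in the difference, whose leading term $-2\frac{(2l+1)!!(2l-1)!!}{(l+1)!\,l!}X_{12}^{2l}X_{13}X_{23}^{m-2l-1}$ I read off from the $X_{12}^{2l}X_{13}$ coefficient of $(X_{12}-X_{13})^{l+1}(X_{12}+X_{13})^l$ (which is $-1$) versus $(X_{12}-X_{13})^{l}(X_{12}+X_{13})^{l+1}$ (which is $+1$). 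Multiplying by $(l+1)X_{13}^{-1}X_{23}^{-1}$ again lands on $X_{12}^{2l}X_{23}^{m-2l-2}$ with total coefficient $(m-2l)(a_0^{(l)})^2$, giving the first line of (\ref{eq:LeadingTermsH3m2Pmll}).

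The delicate case is $m=2l$, where the factor $(m-2l)$ vanishes and the leading term collapses to the pure power $X_{12}^{2l-2}$. The structural observation is that $\hat H_2^{(-2)}=\mathcal L+\mathcal M$, where
\[
\mathcal L=\frac{\partial^2}{\partial X_{23}^2}+\frac{X_{12}^2-X_{13}^2}{X_{12}X_{23}}\frac{\partial^2}{\partial X_{12}\partial X_{23}}+\frac{2}{X_{23}}\frac{\partial}{\partial X_{23}}
\]
preserves the total degree in $(X_{12},X_{13})$ and $\mathcal M$ lowers it by $2$, while no part raises it. The top $(X_{12},X_{13})$‑degree part of $\mathfrak P_{2l,l,l}$ is $(a_0^{(l)})^2(X_{12}^2-X_{13}^2)^l$, which carries no $X_{23}$ and is therefore annihilated by $\mathcal L$; hence $\hat H_2^{(-2)}\mathfrak P_{2l,l,l}$ has $(X_{12},X_{13})$‑degree at most $2l-2$, forcing its lex‑leading monomial to be the pure power $X_{12}^{2l-2}$. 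To find the coefficient I compute the homogeneous degree‑$(2l-2)$ part in closed form,
\[
R^{\mathrm{top}}=\mathcal M\big((a_0^{(l)})^2(X_{12}^2-X_{13}^2)^l\big)+\mathcal L\big(2a_0^{(l)}a_1^{(l)}X_{23}^2(X_{12}^2-X_{13}^2)^{l-2}(X_{12}^2+X_{13}^2)\big),
\]
where the second argument is the degree‑$(2l-2)$ part of $\mathfrak P_{2l,l,l}$. Extracting the $X_{12}^{2l-2}$ coefficient and simplifying collapses the algebra to $(a_0^{(l)})^2\frac{2l^2(2l+1)}{2l-1}=\frac{((2l+1)!!)^2}{(l!)^2}\frac{2l^2}{4l^2-1}$, the $m=2l$ line of (\ref{eq:LeadingTermsH2m2Pmll}).

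For $\hat H_3^{(-2)}$ at $m=2l$ I use the conjugation $\hat H_3^{(-2)}=\sigma\circ\hat H_2^{(-2)}\circ\sigma$ with $\sigma$ the swap $X_{12}\leftrightarrow X_{13}$. Since $\sigma\mathfrak P_{2l,l,l}=(-1)^l\mathfrak P_{2l,l,l}$, one gets $\hat H_3^{(-2)}\mathfrak P_{2l,l,l}=(-1)^l\sigma(\hat H_2^{(-2)}\mathfrak P_{2l,l,l})$, so the $X_{12}^{2l-2}$ coefficient in the $\hat H_3$ result equals $(-1)^l$ times the $X_{13}^{2l-2}$ coefficient in the $\hat H_2$ result. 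Reading the latter off the same $R^{\mathrm{top}}$ gives $(-1)^{l-1}(a_0^{(l)})^2\frac{2l^2(2l+1)}{2l-1}$, so the $\hat H_3$ leading coefficient is $(-1)^l(-1)^{l-1}$ times the $\hat H_2$ value, i.e. its negative, matching (\ref{eq:LeadingTermsH3m2Pmll}). I expect the main obstacle to be precisely the $m=2l$ analysis: proving via the $\mathcal L$/$\mathcal M$ degree splitting that the leading monomial is the pure power $X_{12}^{2l-2}$ (so that no competing term such as $X_{12}^{2l-2}X_{13}^2X_{23}^{-2}$ outranks it), and then carrying the $R^{\mathrm{top}}$ computation with the correct $a_1^{(l)}$ so that the numerators telescope to $2l^2(2l+1)$.
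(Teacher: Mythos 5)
Your proof is correct, and I verified the coefficients you assert without derivation: in the $m=2l$ case the $X_{12}^{2l-2}$ coefficient of $\mathcal M Q_{2l}$ is $2l(2l+1)\big(a_0^{(l)}\big)^2$ and that of $\mathcal L Q_{2l-2}$ is $-\tfrac{2l(l-1)(2l+1)}{2l-1}\big(a_0^{(l)}\big)^2$, which indeed total $\tfrac{2l^2(2l+1)}{2l-1}\big(a_0^{(l)}\big)^2=\tfrac{((2l+1)!!)^2}{(l!)^2}\tfrac{2l^2}{4l^2-1}$, and the $X_{13}^{2l-2}$ coefficient carries the extra sign $(-1)^{l-1}$ as you claim. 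Your route, however, is genuinely different from the paper's. The paper proceeds by brute force: it uses the Rodriguez formula to expand $\mathfrak P_{m,k,l}$ in lexicographic order through two subleading orders (its formula (\ref{eq:PmklLeadingTerms})), substitutes into (\ref{eq:H2m2PmllAction}) and (\ref{eq:H3m2PmllAction}), and simplifies the resulting binomial combinations separately for $\hat H_2^{(-2)}$ and $\hat H_3^{(-2)}$, watching the $X_{12}^{2l}$ and $X_{12}^{2l-1}X_{13}$ terms cancel at $m=2l$ and reading the answer from the survivors. Your $m>2l$ case is the paper's argument in substance (top Legendre coefficients plus the cancellation $(m+2l+2)-2(2l+1)=m-2l$), but your $m=2l$ treatment replaces the paper's binomial bookkeeping with a structural argument: the degree splitting $\hat H_2^{(-2)}=\mathcal L+\mathcal M$ together with $\mathcal L Q_{2l}=0$ (since $Q_{2l}=\big(a_0^{(l)}\big)^2(X_{12}^2-X_{13}^2)^l$ is $X_{23}$-free) at once rules out any surviving $(X_{12},X_{13})$-degree-$2l$ output such as $X_{12}^{2l-2}X_{13}^2X_{23}^{-2}$, forces the leading monomial to be the pure power $X_{12}^{2l-2}$ by total homogeneity, and reduces the coefficient to the two-term expression $R^{\mathrm{top}}=\mathcal M Q_{2l}+\mathcal L Q_{2l-2}$. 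Likewise, deriving the $\hat H_3^{(-2)}$ results via the conjugation $\hat H_3^{(-2)}=\sigma\circ\hat H_2^{(-2)}\circ\sigma$ and the parity $\sigma\mathfrak P_{2l,l,l}=(-1)^l\mathfrak P_{2l,l,l}$ is a real economy: the paper records this conjugation in the preceding lemma but does not exploit it here, instead redoing the full expansion for $\hat H_3^{(-2)}$. Two cosmetic repairs: at $l=1$ the factor $(X_{12}^2-X_{13}^2)^{l-2}$ in your $Q_{2l-2}$ is ill-defined, though harmlessly so since the prefactor $a_1^{(1)}=0$ annihilates it — state that convention explicitly; and in your parity check for the $m>2l$ case you write $\mathfrak P_{2l,l,l}$ where you mean $\mathfrak P_{m,l,l}$.
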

\begin{proof}

Using Rodriguez formula \cite{Askey'2005} for Legendre polynomials we get the following expression for the first two leading terms
\begin{equation}
P_l(x)=\frac1{2^l l!}\left(\frac{(2l)!}{l!}x^l-l\frac{(2l-2)!}{(l-2)!}x^{l-2}\right)+\mathbf S(x^{l-3}).
\label{eq:LegendreLeadingTerms}
\end{equation}
Here by $\mathbf S(x^{l-2})$ we denote the linear combination of monomials $x^i$ with $i<l-2$. 

With our choice of monomial ordering on $\mathbb C[X_{12}^{\pm1},X_{13}^{\pm1},X_{23}^{\pm1}]$, formula (\ref{eq:LegendreLeadingTerms}) implies that 
\begin{equation}
\begin{aligned}
&P_l\left(\frac{X_{12}\pm X_{13}}{X_{23}}\right)=\frac1{2^l}\left(\begin{array}{c}2l\\l\end{array}\right)\left(\frac{X_{12}\pm X_{13}}{X_{23}}\right)^l-\frac l{2^l}\left(\begin{array}{c}2l-2\\l\end{array}\right)\left(\frac{X_{12}\pm X_{13}}{X_{23}}\right)^{l-2}+\mathbf S(X_{12}^{l-3})\\[0.5em]
&\qquad\qquad=\frac1{2^l}\left(\begin{array}{c}2l\\l\end{array}\right)\left(X_{12}^lX_{23}^{-l}\pm lX_{12}^{l-1}X_{13}X_{23}^{-l}+\frac{l(l-1)}2X_{12}^{l-2}X_{13}^2X_{23}^{-l}-\frac{l(l-1)(l-2)}6X_{12}^{l-3}X_{13}^3X_{23}^{-l}\right)\\[0.5em]
&\qquad\qquad\quad-\frac l{2^l}\left(\begin{array}{c}2l-2\\l\end{array}\right)\left(X_{12}^{l-2}X_{23}^{2-l}\pm (l-2)X_{12}^{l-3}X_{13}X_{23}^{2-l}\right)+\mathbf S(X_{12}^{l-3}).
\end{aligned}
\label{eq:PlUVLeadingTerms}
\end{equation}
Hereinafter $\mathbf S(X_{12}^k),\; k\in\mathbb Z$ stands for linear combination of monomials $X_{12}^{i_{12}}X_{13}^{i_{13}}X_{23}^{i_{23}}$ with $i_{12}<k$.

As a corollary of (\ref{eq:PlUVLeadingTerms}) we get the following formula for the leading terms of the basis element (\ref{eq:PmklDefinition})
\begin{equation}
\begin{aligned}
\mathfrak P_{m,k,l}=&X_{23}^mP_k\left(\frac{X_{12}-X_{13}}{X_{23}}\right)P_l\left(\frac{X_{12}+X_{13}}{X_{23}}\right)=\frac1{2^{k+l}}X_{12}^{k+l}X_{23}^{m-k-l}\\[0.5em]
&\quad\times\left(\left(\begin{array}{c}2k\\k\end{array}\right)\left(1-kX_{12}^{-1}X_{13}+\frac{k(k-1)}2X_{12}^{-2}X_{13}^2-\frac{k(k-1)(k-2)}6X_{12}^{-3}X_{13}^3\right)\right.\\[0.5em]
&\qquad\quad\left.-k\left(\begin{array}{c}2k-2\\k\end{array}\right)\left(X_{12}^{-2}X_{23}^2-(k-2)X_{12}^{-3}X_{13}X_{23}^2\right)\right)\\[0.5em]
&\quad\times\left(\left(\begin{array}{c}2l\\l\end{array}\right)\left(1+lX_{12}^{-1}X_{13}+\frac{l(l-1)}2X_{12}^{-2}X_{13}^2+\frac{l(l-1)(l-2)}6X_{12}^{-3}X_{13}^3\right)\right.\\[0.5em]
&\qquad\quad\left.-l\left(\begin{array}{c}2l-2\\l\end{array}\right)\left(X_{12}^{-2}X_{23}^2+(l-2)X_{12}^{-3}X_{13}X_{23}^2\right)\right)+\mathbf S(X_{12}^{k+l-3})\\[0.5em]
=&\frac1{2^{k+l}}X_{12}^{k+l}X_{23}^{m-k-l}\left[\left(\begin{array}{c}2k\\k\end{array}\right)\left(\begin{array}{c}2l\\l\end{array}\right)\left(1+\left(l-k\right)X_{12}^{-1}X_{13}+\frac{(k-l)^2-(k+l)}2X_{12}^{-2}X_{13}^2\right)\right.\\[0.5em]
&\quad-\left(l\left(\begin{array}{c}2k\\k\end{array}\right)\left(\begin{array}{c}2l-2\\l\end{array}\right)+k\left(\begin{array}{c}2l\\l\end{array}\right)\left(\begin{array}{c}2k-2\\k\end{array}\right)\right)X_{12}^{-2}X_{23}^2\\[0.5em]
&\quad-\left(\begin{array}{c}2k\\k\end{array}\right)\left(\begin{array}{c}2l\\l\end{array}\right)\frac{(k-l)((k-l)^2-3(k+l)+2)}6X_{12}^{-3}X_{13}^3\\[0.5em]
&\quad\left.+\left(\left(\begin{array}{c}2k\\k\end{array}\right)\left(\begin{array}{c}2l-2\\l\end{array}\right)(-l)(l-k-2)+\left(\begin{array}{c}2l\\l\end{array}\right)\left(\begin{array}{c}2k-2\\k\end{array}\right)(-k)(l-k+2)\right)X_{12}^{-3}X_{13}X_{23}^2\right]\\[0.5em]
&\quad+\mathbf S(X_{12}^{k+l-3}).
\end{aligned}
\label{eq:PmklLeadingTerms}
\end{equation}
Combining (\ref{eq:H2m2PmllAction}) with (\ref{eq:PmklLeadingTerms}) and dropping all the terms which are subleading to $X_{12}^{2l}$ we get
\begin{align*}
\hat H_2^{(-2)}\mathfrak P_{m,l,l}=&(m+1)(m+2l+2)X_{12}^{2l}X_{23}^{m-2l-2}\\[0.5em]
&\qquad\times\left[\frac1{2^{2l}}\left(\begin{array}{c}2l\\l\end{array}\right)^2\left(1-lX_{12}^{-2}X_{13}^2\right)-\frac{l}{2^{2l-1}}\left(\begin{array}{c}2l\\l\end{array}\right)\left(\begin{array}{c}2l-2\\l\end{array}\right)X_{12}^{-2}X_{23}^2\right]\\[0.5em]
&-(m+1)(l+1)X_{12}^{2l}X_{23}^{m-2l-2}\\[0.5em]
&\qquad\times\left[\frac1{2^{2l+1}}\left(\begin{array}{c}2l+2\\l+1\end{array}\right)\left(\begin{array}{c}2l\\l\end{array}\right)\left(1-X_{12}^{-1}X_{13}-lX_{12}^{-2}X_{13}^2\right)\right.\\[0.5em]
&\qquad-\frac1{2^{2l+1}}\left(l\left(\begin{array}{c}2l+2\\l+1\end{array}\right)\left(\begin{array}{c}2l-2\\l\end{array}\right)+(l+1)\left(\begin{array}{c}2l\\l\end{array}\right)\left(\begin{array}{c}2l\\l+1\end{array}\right)\right)X_{12}^{-2}X_{23}^2\\[0.5em]
&\qquad+\frac1{2^{2l+1}}\left(\begin{array}{c}2l\\l\end{array}\right)\left(\begin{array}{c}2l+2\\l+1\end{array}\right)\left(1+X_{12}^{-1}X_{13}-lX_{12}^{-2}X_{13}^2\right)\\[0.5em]
&\qquad\left.-\frac1{2^{2l+1}}\left((l+1)\left(\begin{array}{c}2l\\l\end{array}\right)\left(\begin{array}{c}2l\\l+1\end{array}\right)+l\left(\begin{array}{c}2l+2\\l+1\end{array}\right)\left(\begin{array}{c}2l-2\\l\end{array}\right)\right)X_{12}^{-2}X_{23}^2\right]\\[0.5em]
&+\mathbf S(X_{12}^{2l-2})\\[0.5em]
=&(m+1)X_{12}^{2l}X_{23}^{m-2l-2}\left[(m-2l)\frac{((2l-1)!!)^2}{(l!)^2}\right.\\[0.5em]
&\qquad-\frac{l}{2^{2l}}\left(\begin{array}{c}2l\\l\end{array}\right)\left((m+2l+2)\left(\begin{array}{c}2l\\l\end{array}\right)-(l+1)\left(\begin{array}{c}2l+2\\l+1\end{array}\right)\right)X_{12}^{-2}X_{13}^2\\[0.5em]
&\qquad\left.-\frac1{2^{2l}}\left(\begin{array}{c}2l\\l\end{array}\right)\left((l+1)l\left(\begin{array}{c}2l\\l\end{array}\right)-(m+1)2^l\frac{(2l-3)!!}{(l-2)!}\right)X_{12}^{-2}X_{23}^2\right]+\mathbf S(X_{12}^{2l-2}).
\end{align*}
When $m>2l$, the first term on the right hand side is nonvanishing and coicides with the first case of (\ref{eq:LeadingTermsH2m2Pmll}). When $m=2l$, the first two terms on the right hand side vanish, while the third term simplifies exactly to the second case of (\ref{eq:LeadingTermsH2m2Pmll}). Finally, in the case $m=l=0$, the basis element is constant and thus annihinated by $\hat H_2^{(-2)}$.

Similarly, for the action of $\hat H_3^{(-2)}$ on the basis element we get
\begin{align*}
\hat H_3^{(-2)}\mathfrak P_{m,l,l}=&(m+1)(m+2l+2)X_{12}^{2l}X_{23}^{m-2l-2}\\[0.5em]
&\qquad\quad \times\left[\frac1{2^{2l}}\left(\begin{array}{c}2l\\l\end{array}\right)^2(1-l X_{12}^{-2}X_{13}^2)-\frac l{2^{2l-1}}\left(\begin{array}{c}2l\\ l\end{array}\right)\left(\begin{array}{c}2l-2\\l\end{array}\right)X_{12}^{-2}X_{23}^2\right]\\[0.5em]
&\quad +(m+1)(l+1)X_{12}^{2l+1}X_{13}^{-1}X_{23}^{m-2l-2}\\[0.5em]
&\qquad\quad\times\left[\frac1{2^{2l+1}}\left(\begin{array}{c}2l+2\\l+1\end{array}\right)\left(\begin{array}{c}2l\\l\end{array}\right)(1-X_{12}^{-1}X_{13}-lX_{12}^{-2}X_{13}^2+lX_{12}^{-3}X_{13}^3)\right.\\[0.5em]
&\qquad\quad-\frac1{2^{2l+1}}\left(l\left(\begin{array}{c}2l+2\\l+1\end{array}\right)\left(\begin{array}{c}2l-2\\l\end{array}\right)+(l+1)\left(\begin{array}{c}2l\\l\end{array}\right)\left(\begin{array}{c}2l\\l+1\end{array}\right)\right)X_{12}^{-2}X_{23}^2\\[0.5em]
&\qquad\quad+\frac1{2^{2l+1}}\left(\left(\begin{array}{c}2l+2\\l+1\end{array}\right)\left(\begin{array}{c}2l-2\\ l\end{array}\right)3l+\left(\begin{array}{c}2l\\l\end{array}\right)\left(\begin{array}{c}2l\\l+1\end{array}\right)(-l-1)\right)X_{12}^{-3}X_{13}X_{23}^2\\[0.5em]
&\qquad\quad-\frac1{2^{2l+1}}\left(\begin{array}{c}2l\\l\end{array}\right)\left(\begin{array}{c}2l+2\\l+1\end{array}\right)\left(1+X_{12}^{-1}X_{13}-lX_{12}^{-2}X_{13}^2+lX_{12}^{-3}X_{13}^3\right)\\[0.5em]
&\qquad\quad +\frac1{2^{2l+1}}\left((l+1)\left(\begin{array}{c}2l\\ l\end{array}\right)\left(\begin{array}{c}2l\\ l+1\end{array}\right)+l\left(\begin{array}{c}2l+2\\ l+1\end{array}\right)\left(\begin{array}{c}2l-2\\ l\end{array}\right)X_{12}^{-2}X_{23}^2\right)\\[0.5em]
&\qquad\quad\left. -\frac1{2^{2l+1}}\left((l+1)\left(\begin{array}{c}2l\\ l\end{array}\right)\left(\begin{array}{c}2l\\ l+1\end{array}\right)-3l\left(\begin{array}{c}2l+2\\ l+1\end{array}\right)\left(\begin{array}{c}2l-2\\ l\end{array}\right)\right)X_{12}^{-3}X_{13}X_{23}^2\right]\\[0.5em]
&\quad+\mathbf S(X_{12}^{l-2})\\[0.5em]
=&(m+1)X_{12}^{2l}X_{23}^{m-2l-2}\left[(m-2l)\frac{((2l-1)!!)^2}{(l!)^2}\right.\\[0.5em]
&\qquad-\frac1{2^{2l}}(m+1)\left(2l(m+2l+2)\left(\begin{array}{c}2l\\ l\end{array}\right)\left(\begin{array}{c}2l-2\\l\end{array}\right)+(l+1)^2\left(\begin{array}{c}2l\\ l\end{array}\right)\left(\begin{array}{c}2l\\ l+1\end{array}\right)\right.\\[0.5em]
&\qquad\quad\left.\left.-3l(l+1)\left(\begin{array}{c}2l-2\\ l\end{array}\right)\left(\begin{array}{c}2l+2\\ l+1\end{array}\right)\right)X_{12}^{-2}X_{23}^2\right]+\mathbf S(X_{12}^{2l-2}).
\end{align*}
As with the previous operator, when $m>2l$ the first therm on the right hand side is nonvanishing and coincides with the first case of (\ref{eq:LeadingTermsH3m2Pmll}). When $m=2l>0$, the first term vanishes, while the coefficient in the second term simplifies exactly to the corresponding case of (\ref{eq:LeadingTermsH3m2Pmll}). Finally, when $m=l=0$, the basis element $\mathfrak P_{0,0,0}=1$ is annihilated by $\hat H_3^{(-2)}$.
\end{proof}

\begin{lemma}
The common kernel of $\hat H_1^{(-2)}$ with either one of the two remaining operators $\hat H_2^{(-2)},\hat H_3^{(-2)}$ is a graded subspace of the polynomial ring. For all $n\in\mathbb Z_{\geq0}$, the homogeneous component of the common kernel of degree $2n+1$ is trivial
\begin{equation}
\ker\hat H_1^{(-2)}\cap\ker\hat H_j^{(-2)}\Big|_{\left(\mathbb C[X_{12},X_{13},X_{23}]\right)_{2n+1}}=\{0\},\qquad 2\leq j\leq3,
\label{eq:OddKernelH1m2Hjm2}
\end{equation}
while the homogeneous component of the common kernel of degree $2n$ is one-dimensional and given by
\begin{subequations}
\begin{align}
\ker\hat H_1^{(-2)}\cap\ker\hat H_2^{(-2)}\Big|_{\left(\mathbb C[X_{12},X_{13},X_{23}]\right)_{2n}}=&\mathbb C\phi_{2n}^{(1,2)},\quad\textrm{where}\quad \phi_{2n}^{(1,2)}:=\sum_{l=0}^n(-1)^{n-l}\frac{2l+1}{n+l+1}\left(\begin{array}{c}2n\\n-l\end{array}\right)\mathfrak P_{2n,l,l},
\label{eq:EvenKernelH1m2H2m2}\\[0.3em]
\ker\hat H_1^{(-2)}\cap\ker\hat H_3^{(-2)}\Big|_{\left(\mathbb C[X_{12},X_{13},X_{23}]\right)_{2n}}=&\mathbb C\phi_{2n}^{(1,3)},\quad\textrm{where}\quad \phi_{2n}^{(1,3)}:=\sum_{l=0}^n\frac{2l+1}{n+l+1}\left(\begin{array}{c}2n\\n-l\end{array}\right)\mathfrak P_{2n,l,l}.
\label{eq:EvenKernelH1m2H3m2}
\end{align}
\label{eq:EvenKernelH1m2Hjm2}
\end{subequations}
\end{lemma}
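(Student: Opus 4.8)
The plan is to work one homogeneous degree at a time. Since each of $\hat H_1^{(-2)},\hat H_2^{(-2)},\hat H_3^{(-2)}$ is homogeneous of degree $-2$, it sends $\left(\mathbb C[X_{12},X_{13},X_{23}]\right)_m$ into Laurent polynomials of degree $m-2$ without mixing degrees, so every intersection of their kernels is automatically a graded subspace and it suffices to analyse each homogeneous component separately. Fix $m$ and recall from (\ref{eq:KerH1m2}) that $\ker\hat H_1^{(-2)}$ in degree $m$ has basis $\{\mathfrak P_{m,l,l}\,:\,0\le l\le\lfloor m/2\rfloor\}$; hence I must restrict $\hat H_j^{(-2)}$ ($j\in\{2,3\}$) to this subspace and determine for which coefficient vectors the element $w=\sum_{l} a_l\,\mathfrak P_{m,l,l}$ satisfies $\hat H_j^{(-2)}w=0$.

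First I would dispose of the upper bounds and the odd case using Lemma \ref{lemm:LeadingTermsH2m2H3m2Pmll}. In the range $m>2l$ the leading terms (\ref{eq:LeadingTermsH2m2Pmll}) and (\ref{eq:LeadingTermsH3m2Pmll}) are nonzero and sit at the pairwise distinct monomials $X_{12}^{2l}X_{23}^{m-2l-2}$ (distinct $X_{12}$-exponents $0,2,4,\dots$). For odd $m=2n+1$ the inequality $m>2l$ holds for every $l=0,\dots,n$, so the $n+1$ vectors $\hat H_j^{(-2)}\mathfrak P_{m,l,l}$ have distinct nonvanishing leading terms and are linearly independent; thus $\hat H_j^{(-2)}$ is injective on $\ker\hat H_1^{(-2)}$ and the common kernel is $\{0\}$, which is (\ref{eq:OddKernelH1m2Hjm2}). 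For even $m=2n$ the same reasoning applies to $l=0,\dots,n-1$ (where still $m>2l$), producing $n$ linearly independent images, so $\mathrm{rank}\,\hat H_j^{(-2)}\big|_{\ker\hat H_1^{(-2)}}\ge n$ and therefore $\dim\big(\ker\hat H_1^{(-2)}\cap\ker\hat H_j^{(-2)}\big)\le 1$ in degree $2n$.

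It remains, for even degree $2n$, to exhibit one nonzero element of the common kernel and to check it equals (\ref{eq:EvenKernelH1m2H2m2})--(\ref{eq:EvenKernelH1m2H3m2}). Here I would use the exact action formulas (\ref{eq:H2m2PmllAction}) and (\ref{eq:H3m2PmllAction}) rather than leading terms. Writing $X_{12}X_{23}^{-1}=\tfrac12(U+V)$ and $X_{13}X_{23}^{-1}=\tfrac12(V-U)$ in the variables of (\ref{eq:PmklDefinition}) and applying the Legendre three-term recurrence $x\mathcal P_l(x)=\tfrac{l+1}{2l+1}\mathcal P_{l+1}(x)+\tfrac{l}{2l+1}\mathcal P_{l-1}(x)$, each right-hand side of (\ref{eq:H2m2PmllAction})--(\ref{eq:H3m2PmllAction}) expands into the linearly independent family $\{\mathfrak P_{m,p,p\pm1}\}$. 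Collecting the coefficient of $\mathfrak P_{2n,p,p-1}$ and equating it to zero yields a first-order recurrence
\begin{equation*}
\frac{a_p}{a_{p-1}}=\mp\frac{(2n-2p+2)(2p+1)}{(2p-1)(2n+2p+2)}=\mp\frac{(n-p+1)(2p+1)}{(n+p+1)(2p-1)},
\end{equation*}
with the upper sign for $\hat H_2^{(-2)}$ and the lower sign for $\hat H_3^{(-2)}$; crucially the top relation (the coefficient of $\mathfrak P_{2n,n+1,n}$) carries the factor $m-2n=0$ and is vacuous, which is exactly why a nonzero solution survives only in even degree. A direct check shows the binomial coefficients in (\ref{eq:EvenKernelH1m2H2m2})--(\ref{eq:EvenKernelH1m2H3m2}) solve this recurrence, the alternating sign $(-1)^{n-l}$ accounting for the $\mp$, so these explicit polynomials span the one-dimensional kernel.

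I expect the main obstacle to be precisely the even-degree identification. Leading terms alone are insufficient there: by the second case of (\ref{eq:LeadingTermsH2m2Pmll})--(\ref{eq:LeadingTermsH3m2Pmll}) the leading monomial of $\hat H_j^{(-2)}\mathfrak P_{2n,n,n}$ collapses onto $X_{12}^{2n-2}$ and collides with that of the $l=n-1$ term, so one cannot decide from leading behaviour whether the kernel has dimension $0$ or $1$. Resolving this forces one to use the full action formulas and the explicit coefficient recurrence above, together with the elementary verification that the stated binomial expressions satisfy it.
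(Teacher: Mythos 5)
Your proposal is correct and follows essentially the same route as the paper: the odd case and the dimension bound via the distinct nonvanishing leading monomials of $\hat H_j^{(-2)}\mathfrak P_{m,l,l}$ from Lemma \ref{lemm:LeadingTermsH2m2H3m2Pmll}, and the even-degree existence via the exact actions (\ref{eq:H2m2PmllAction})--(\ref{eq:H3m2PmllAction}) expanded through the Legendre three-term recurrence. Your coefficient recurrence $a_p/a_{p-1}=\mp(n-p+1)(2p+1)/\big((n+p+1)(2p-1)\big)$ is exactly the paper's relation $c_{l+1}\tfrac{(n+l+2)(l+1)}{2l+3}+c_l\tfrac{(n-l)(l+1)}{2l+1}=0$ (with the sign flipped for $\hat H_3^{(-2)}$), and your observation that the top relation is vacuous because of the factor $m-2l$ at $l=n$ correctly identifies why the kernel survives only in even degree.
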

\begin{proof}
Let $m\in\mathbb Z_{\geq0}$ be a nonnegative integer. From (\ref{eq:KerH1m2}) we know that
\begin{equation}
\left\{\mathfrak P_{m,l,l}:0\leq l\leq\left\lfloor\frac m2\right\rfloor\right\}
\label{eq:KernelH1m2BasisIntersectionTwo}
\end{equation}
forms a basis on the kernel of operator $\hat H_1^{(-2)}$.

When $m\equiv1\bmod2$, it follows by (\ref{eq:LeadingTermsH2m2Pmll}) that the leading terms of the action of $\hat H_2^{(-2)}$ on basis elements (\ref{eq:KernelH1m2BasisIntersectionTwo}) are all distinct. Hence, the homogeneous component of the common kernel of $\hat H_1^{(-2)}$ and $\hat H_2^{(-2)}$ of degree $m$ must be trivial.

Now, assume $m=2n$ for some $n\in\mathbb N$ and let 
\begin{equation}
\psi=\sum_{l=0}^{n}c_l\mathfrak P_{2n,l,l}\quad\in\quad \ker\hat H_1^{(-2)}\cap\ker\hat H_2^{(-2)}\Big|_{\left(\mathbb C[X_{12},X_{13},X_{23}]\right)_{2n}}
\label{eq:PsiElementKernelH1m2H2m2}
\end{equation}
be a nontrivial element of the homonegeous component of the common kernel of degree $2n$. Again, by (\ref{eq:LeadingTermsH2m2Pmll}) it follows that necessarily $c_l\neq 0$. Hence the homogeneous component of the common kernel is at most one-dimensional.

It remains to show that $\phi_{2n}^{(1,2)}$ belongs to the kernel. By (\ref{eq:KerH1m2}) we know that $\phi_{2n}^{(1,2)}$ is annihilated by $\hat H_1^{(-2)}$. As for the action of $\hat H_2^{(-2)}$, consider an arbitrary element of the form (\ref{eq:PsiElementKernelH1m2H2m2}),  we have
\begin{equation*}
\begin{aligned}
\frac{X_{12}X_{23}^{m-1}}{m+1}&\hat H_2^{(-2)}\psi=\frac{X_{12}X_{23}^{m-1}}{m+1}\sum_{l=0}^nc_{l}\hat H_2^{(-2)}\mathfrak P_{2n,l,l}\\[0.5em]
\stackrel{(\ref{eq:H2m2ActionPmllUV})}{=}&\sum_{l=0}^n c_l \left((m+2l+2)\frac{U+V}2\mathcal P_l(U)\mathcal P_l(V)-(l+1)\mathcal P_l(U)\mathcal P_l(V)-(l+1)\mathcal P_l(U)\mathcal P_{l+1}(V)\right)\\[0.5em]
=\;\,&\sum_{l=0}^{n-1} \left( c_{l+1} \dfrac{(n + l + 2)(l+1)}{2l + 3} + c_{l} \dfrac{(n-l)(l+1)}{2l + 1} \right) \big( \mathcal P_l(U)\mathcal P_{l+1}(V) + \mathcal P_{l+1}(U)\mathcal P_{l}(V) \big)  
\end{aligned}
\end{equation*}
The right hand side of the above expression vanishes for $c_l$ as in (\ref{eq:EvenKernelH1m2H2m2}), namely when we specialize to
\begin{equation*}
c_l=(-1)^{n-l}\frac{2l+1}{n+l+1}\left(\begin{array}{c}2n\\ n-l\end{array}\right).
\end{equation*}

The proof of the remaining part (\ref{eq:EvenKernelH1m2H3m2}) is completely analogous to the argument presented above and thus omitted for the sake of brevity.
\end{proof}

\begin{customprop}{\ref{prop:KernelHm2}}
Let $p\in\mathbb C[X_{12},X_{13},X_{23}]$ be a polynomial annihilated by all three of the operators (\ref{eq:Hm2}), then necessarily it is a constant polynomial $p\in\mathbb C$. In other words,
\begin{equation}
\ker\hat H_1^{(-2)}\cap\ker\hat H_2^{(-2)}\cap\ker\hat H_3^{(-2)}=\mathbb C.
\label{eq:CommonKernelHm2}
\end{equation}
\end{customprop}
\begin{proof}
Recall that the three operators $\hat H_i^{(-2)},\;1\leq i\leq3$ are all homogeneous of degree $-2$. As a result, their common kernel is a graded subspace of the polynomial ring. The constant polynomial is annihilated by all three operators, so the homogeneous component of degree zero of the common kernel is $\mathbb C$. To prove (\ref{eq:CommonKernelHm2}) we have thus left to show that all higher homogeneous components of the common kernel are all trivial.

Let $m\in\mathbb N$ be a natural number. When $m\equiv 1\bmod 2$, by (\ref{eq:OddKernelH1m2Hjm2}) the corresponding homogeneous component must be trivial. Next, when $m=2n$ for some $n\in\mathbb N$, it follows by (\ref{eq:EvenKernelH1m2Hjm2}) that every homogeneous element $\psi$ of degree $2n$ in the common kernel must be propostional simultaneously to $\phi_{2n}^{(1,2)}$ and $\phi_{2n}^{(1,3)}$. The latter implies that $\psi=0$ and that completes the proof.
\end{proof}

\section*{Acknowledgements}
W.Y. is supported by National Key R\&D Program of China (2022ZD0117000)

\bibliographystyle{alpha}
\bibliography{references}

\end{document}